\documentclass[11pt]{amsart}
\usepackage{amsfonts}
\usepackage{latexsym}
\usepackage{amssymb}
\usepackage{amsmath}
\usepackage{xcolor}
\usepackage{bbm}
\usepackage{tikz}
\usepackage{enumerate}
\usepackage{mathrsfs}
\usepackage{todonotes}
\usepackage{relsize}
\usepackage{xfrac}

\usepackage{verbatim} 
\usepackage{enumitem} 

\usepackage{csquotes} 

\usepackage[left=2.5cm, top=2.5cm,bottom=2.5cm,right=2.5cm]{geometry}


\newcommand{\F}{\mathbb F}

\newcommand{\R}{\mathbb R}
\newcommand{\N}{\mathbb N}

\newcommand{\E}{\mathbb E}
\newcommand{\Pro}{\mathbb P}

\newcommand{\vol}{\mathrm{vol}}
\newcommand{\Mat}{\mathscr{M}}

\def\dint{\textup{d}}
\newcommand{\SSS}{\ensuremath{{\mathbb S}}}

\newcommand{\B}{\ensuremath{{\mathbb B}}}


\DeclareMathOperator{\Tr}{Tr}

\renewcommand{\Re}{\operatorname{Re}}  

\newcommand{\ssbp}{\scriptscriptstyle\boxplus} 


\newtheorem{thm}{Theorem}[section]
\newtheorem{cor}[thm]{Corollary}
\newtheorem{lemma}[thm]{Lemma}
\newtheorem{df}[thm]{Definition}
\newtheorem{proposition}[thm]{Proposition}

\newtheorem{example}[thm]{Example}

{
\theoremstyle{definition}

\newtheorem{rmk}[thm]{Remark}

}
\def\bB{\mathbf{B}}
\def\bC{\mathbf{C}}

\def\bE{\mathbf{E}}

\def\bG{\mathbf{G}}
\def\bM{\mathbf{M}}
\def\bN{\mathbf{N}}
\def\bP{\mathbf{P}}

\def\bU{\mathbf{U}}

\def\bW{\mathbf{W}}

\definecolor{kgreen}{rgb}{0, 0.42, 0}

\usepackage{nomencl}
\makenomenclature

\setlength{\parindent}{0pt}


\begin{document}


\title[Weighted $p$-radial Distributions on Euclidean and Matrix $p$-balls]{Weighted $p$-radial Distributions on Euclidean\\ and Matrix $p$-balls with Applications to Large Deviations}

\author[Tom Kaufmann]{Tom Kaufmann}
\address{Tom Kaufmann: Faculty of Mathematics, Ruhr University Bochum, Germany} \email{tom.kaufmann@rub.de}

\author[Christoph Th\"ale]{Christoph Th\"ale}
\address{Christoph Th\"ale: Faculty of Mathematics, Ruhr University Bochum, Germany} \email{christoph.thaele@rub.de}

\keywords{Asymptotic geometric analysis, eigenvalues, high dimensional convexity, $\ell_p^n$-balls, large deviation principles, matrix unit balls, random matrix theory, Schatten classes, singular values}
\subjclass[2010]{Primary: 52A23, 60B20  Secondary: 47B10, 60F10}



\begin{abstract}
A probabilistic representation for a class of weighted $p$-radial distributions, based on mixtures of a weighted cone probability measure and a weighted uniform distribution on the Euclidean $\ell_p^n$-ball, is derived. Large deviation principles for the empirical measure of the coordinates of random vectors on the $\ell_p^n$-ball with distribution from this weighted measure class are discussed. The class of $p$-radial distributions is extended to $p$-balls in classical matrix spaces, both for self-adjoint and non-self-adjoint matrices. The eigenvalue distribution of a self-adjoint random matrix, chosen in the matrix $p$-ball according to such a distribution, is determined. Similarly, the singular value distribution is identified in the non-self-adjoint case. Again, large deviation principles for the empirical spectral measures for the eigenvalues and the singular values are presented as an application.
\end{abstract}

\maketitle



\section{Introduction}\label{sec:Introduction}

In $n$-dimensional Euclidean space there is a one-to-one correspondence between norms and symmetric convex bodies. Any given norm $\| \cdot \|$ on $\R^n$ defines a symmetric convex body in the form of its unit ball 
	$$\B_{\| \cdot \|} := \{ x \in \R^n: \| x \| \le 1 \},$$
	and, vice versa, a symmetric convex body $K \subset \R^n$ induces a norm $\| \cdot \|_K$ on $\R^n$ via the Minkowski functional
	$$\|x\|_K := \inf \left\{r \in [0,\infty): x \in r K \right\}, \qquad x \in \R^n,$$
	with respect to which $K$ itself is the unit ball $\B_{\| \cdot\|_K}$. This shows how the study of norms (or normed spaces) and symmetric convex bodies are closely related.	The study of convex bodies in high dimensions, known today as asymptotic geometric analysis, has arisen from the local theory of Banach spaces, which aimed at analyzing infinite-dimensional normed spaces via their local substructures, such as their unit balls. Given an infinite-dimensional Banach space, structures like its unit ball are naturally of infinte dimension as well, and since working in infinite dimensions is inherently more difficult than working in the finite-dimensional setting, it is a fuitful approach to instead study the finite-dimensional counterparts of such structures asymptotically in the limit of the dimension. This was the motivating impulse giving rise to the field of asymptotic geometric analysis and has yielded a number of highly relevant results, such as solutions to Banachs' hyperplane problem \cite{Gowers} or the unconditional basic sequence problem \cite{GowersMaurey} (also see \cite{MilmanTJ} for a broader context on these results). \\
Despite having its origin in the realm of functional analysis, the field has since established itself in its own right, also considering problems beyond the study of symmetric convex bodies that occur naturally as the unit balls of Banach spaces. High-dimensional convexity furthermore has a large number of applications, e.g. in signal processing, such as compressed sensing (see \cite{chafai2012interactions, Foucart}) and sparse signal recovery (see \cite[Chapter 10]{Vershynin2018}), or random information and approximation theory (see e.g. \cite{hinrichs2021random, hinrichs2021random2, hinrichs2019curse, krieg2020random}).\\
\\
In high dimensions convex bodies exhibit certain regularities, such as volume concentration phenomena (see, e.g., \cite{GuedonConcentrationPhenomena}), which make it highly useful to approach them from a probabilistic perspective. As pointed out in \cite{BookAGA}, it might seem counter-intuitive to analyze something exhibiting regularities from a probabilistic perspective, as probability concerns itself with studying the nature of irregularity, i.e., randomness, of given quantities. But as with well-known limit theorems from probability, such as the law of large numbers and the central limit theorem, with large sample sizes (and analogously -- with high dimensionality) random objects exhibit interesting patterns well characterized in the language of probability and vice versa. Thus, one can view asymptotic geometric analysis as being located somewhat at the intersection between geometry, functional analysis, and probability theory. Several analogues of probabilistic results have been found in high dimensional convex geometry, the central limit theorem being the most notable example (see, e.g., Anttila, Ball and Perissinaki \cite{ABPclt}, Klartag \cite{KlartagCLT, KlartagCLT2}). In fact, many of these results have been extended beyond the realm of distributions on convex bodies to isotropic log-concave measures. This extension allows to give functional versions of classic geometric identities and prove results from probability via a geometric approach, hence giving it the name \enquote{\textit{Geometrization of Probability}} (see \cite{ProceedingsAGA, MilmanGeoProb}). Thus, asymptotic geometric analysis is not merely relevant to understanding high-dimensional geometric objects and normed spaces, but is also a promising area of study for broader probability theory. For background information on asymptotic geometric analysis, we refer the reader to the surveys and monographs \cite{BookAGA, BookGICB, GuedonConcentrationPhenomena, GuedonConcentrationIneq}.

The study of $\ell_p^n$-balls has long been a prominent area of research in high dimensional convex geometry and the local theory of Banach spaces, as they are the unit balls in the finite-dimensional sequence spaces $\ell_p^n$. For random vectors in $\ell_p^n$-balls in high-dimensional Euclidean space many properties, such as concentration phenomena and projection behaviors, are known. We refer to the survey by Prochno, Th\"ale and Turchi \cite{PTTSurvey} for a comprehensive overview of old and more recent results. Let us denote by $\bU_{n,p}$ the uniform distribution on the Euclidean $\ell_p^n$-ball $\B_p^n$ and by $\bC_{n,p}$ the cone probability measure on the $\ell_p^n$-sphere $\SSS_p^{n-1}$. For the Euclidean sphere $\SSS_2^{n-1}$ the Poincar\'e-Maxwell-Borel lemma states that the joint distribution of any fixed number $k$ of coordinates of a random vector with distribution $\bC_{n,2}$ is approximately standard Gaussian (see \cite{DiaconisFreedman}). This was furthered to $\SSS_p^{n-1}$ for any $p\in[1,\infty]$ by Rachev and R\"uschendorf \cite{RachevRueschendorf} and Naor and Romik \cite{NR2003}. Moreover, an extension of this was given by Johnston and Prochno \cite{JohnstonProchnoMaxwell} for generalized Orlicz-balls, whose $k$-marginals are, however, not given by generalized Gaussians. 
Rachev and R\"uschendorf \cite{RachevRueschendorf} and Schechtman and Zinn \cite{SchechtmanZinn} also provided a probabilistic representation for random vectors with distributions $\bU_{n,p}$ and $\bC_{n,p}$. For $p\in(0,\infty]$ this was generalized by Barthe, Gu\'edon, Mendelson and Naor \cite{BartheGuedonEtAl}, who gave a probabilistic representation for a class of mixtures of $\bC_{n,p}$ and $\bU_{n,p}$. For a Borel probability measure $\bW$ on $[0,\infty)$ they defined the class of distributions $\bP_{n,p,\bW} := \bW(\{0\})\bC_{n,p}+\Psi\bU_{n,p}$ on $\B_p^n$, where $\Psi$ is an appropriate $p$-radial density that depends on $\bW$, and provided a convenient representation of $\bP_{n,p,\bW}$ via a random vector of generalized Gaussians. The choice of $\bW$ determines how exactly the cone probability measure and the uniform distribution get mixed. This class of measures and its corresponding representations have gained considerable interest in asymptotic and convex geometric analysis and were used in a variety of applications (see \cite{APTldp, APTclt, BartheGamboaEtAl, GKR, NaorTAMS, PaourisWernerRelativeEntropy, SodinIsoperimetric}, to name just a few). In this paper, we will extend these results further by considering a similar class of distributions on $\B^n_p$ weighted by an additional homogeneous function. For some suitable function $f:\R^n\to [0,\infty)$ we construct a weighted uniform distribution $\bU_{n,p, f}$ and cone probability measure $\bC_{n,p,f}$, and show a weighted analogue to \cite{BartheGuedonEtAl} for $\bP_{n,p,\bW,f} := \bW(\{0\})\bC_{n,p,f}+\Psi_f\bU_{n,p,f}$ on $\B_p^n$. This will turn out to be very useful when considering analogues of $\ell_p^n$-balls in other spaces than $\R^n$.

In the present paper, we will study concentration phenomena on $p$-balls in both Euclidean space and within finite dimensional Schatten trace classes $\mathcal{S}^n_p$ in matrix space. Generally, for a given $p\in(0,\infty]$, the Schatten trace class $\mathcal{S}_p$ is the Banach space of compact linear operators between two Hilbert spaces whose singular values form a sequence within the sequence space $\ell_p$. We will however focus on the finite dimensional Schatten trace classes $\mathcal{S}^n_p$, i.e., the spaces of $(n\times n)$-matrices (with real, complex or quaternionic entries) whose singular values form a vector in $\ell_p^n$. Additionally, we will also consider their self-adjoint subclasses, that is, the  spaces of self-adjoint $(n\times n)$-matrices whose eigenvalues also form a vector in $\ell_p^n$. The unit balls in these Schatten trace classes $\mathcal{S}^n_p$ are what we will refer to as matrix $p$-balls.

There has been a rising interest in the study of these Schatten trace classes and their unit balls in recent years. For example, Gu\'edon and Paouris \cite{GuedonPaouris} provided concentration inequalities for points uniformly distributed within the matrix $p$-ball and König, Meyer and Pajor \cite{KMP1998} showed that the isotropy constants of matrix $p$-balls (for $p\in[1,\infty]$) are bounded. Barthe and Cordero-Erausquin \cite{BartheInvariance} derived variance estimates, Radke and Vritsiou \cite{RadkeVritsiou} proved the thin shell conjecture and Vritsiou \cite{Vritsiou} proved the variance conjecture for the operator norm in $\mathcal{S}_p^n$. Hinrichs, Prochno and Vybiral \cite{HPVEntropy, HPVGelfand} derived optimal bounds for the entropy numbers and sharp estimates for the Gelfand numbers of natural embeddings of $\mathcal{S}_p^n$, and Prochno and Strzelecki \cite{PSApprox} also considered the approximation numbers of such embeddings and studied their relationship to the Gelfand and Kolmogorov numbers. Kabluchko, Prochno and Th\"ale \cite{KPTEnsembles, KPTVolumeRatio} gave the exact asymptotic volumes and volume ratios of matrix $p$-balls and studied their intersection volumes. Also, Kabluchko, Prochno and Th\"ale \cite{KPTEnsembles, KPTSanov} studied the eigenvalue distribution as well as singular value distribution of random matrices distributed according to the cone probability measure and the uniform distribution in matrix $p$-balls. Following a line of classical arguments in the spirit of \cite{R1984} in combination with an approach from log-potential theory, they showed that for such random matrices the vector of the eigenvalues (or singular values) has respective distribution $\bC_{n,p,f}$ and $\bU_{n,p,f}$ on the Euclidean $\ell_p^n$-ball $\B^n_p$, with $f$ being the suitable repulsion factor between the eigenvalues (or singular values) of the random matrices (see e.g. \cite{AGZ2010}). Our aim here is to put this last result into a wider context by investigating the eigenvalue and singular value distribution of random matrices that have the analogue distribution to $\bP_{n,p,\bW}$ on matrix $p$-balls. Using similar arguments, we will show that the vector of eigenvalues of such a random matrix also is $p$-radially distributed according to $\bP_{n,p,\bW,f}$ on $\B^n_p$, with $f$ being the appropriate repulsion factor again, and the same holds for the vector of singular values on the non-negative segment of $\B^n_p$, denoted as $\B^n_{p,+}$. This connection paves the way to approach concentration phenomena on matrix $p$-balls via those in Euclidean space with appropriately weighted distributions.

As an application of the connection just described, we study the large deviation behaviors of random elements in Euclidean and matrix $p$-balls. The usage of large deviations theory was only recently introduced to asymptotic geometric analysis by Gantert, Kim and Ramanan \cite{GKR}, who derived a large deviation principle for the norm of projections of $\ell_p^n$-balls onto one-dimensional subspaces. Since then, large deviations have been applied as a useful tool in several other works (see e.g. \cite{APTldp, APTclt, KimPhD, KimRamanan}). In case of the Euclidean $\ell_p^n$-balls, the results of Kim and Ramanan \cite{KimRamanan} are of particular interest to us. For a random vector with distribution $\bC_{n,p}$ they gave a large deviation principle for the empirical measure of its coordinates. Their findings are in the spirit of the theorem of Sanov \cite[Theorem 2.1.10]{DZ}, as the corresponding rate function is the relative entropy perturbed by a $p$-th moment penalty. We want to expand on their results and give a large deviation principle for the empirical measure of a random vector with distribution $\bP_{n,p,\bW}$. We will show that, even though the distribution $\bP_{n,p,\bW}$ is highly dependent on the choice of $\bW$, for certain classes of $\bW$ the corresponding rate function will be universal to all $\bP_{n,p,\bW}$. The results of Kim and Ramanan have been further generalized by Frühwirth and Prochno \cite{FProchnoSanovOrlicz}, who derived a Sanov-type large deviation principle for the empirical measure of random vectors uniformly distributed in Orlicz-balls. In case of the matrix $p$-ball, an analogue result to that of Kim and Ramanan \cite{KimRamanan} has been given by Kabluchko, Prochno and Th\"ale \cite{KPTSanov}. They derived a large deviation principle for the empirical spectral measure (for both eigenvalues and singular values) of random matrices that are distributed according to the uniform distribution or the cone probability measure on the matrix $p$-ball. We will derive similar results for the analogue of $\bP_{n,p,\bW}$ on matrix $p$-balls and show a similar universality of the rate function. To do so, we will utilize the probabilistic representation results for the eigenvalue and singular value distributions we derived beforehand.

Summarizing, our overall goals are threefold. First, we want to expand the results from \cite{BartheGuedonEtAl} to weighted $p$-radial distributions $\bP_{n,p,\bW,f}$. This will be done in Section \ref{sec:Weighted}. Second, we want to show that for self-adjoint and non-self-adjoint random matrices, which are distributed according to the analogue of $\bP_{n,p,\bW}$ on matrix $p$-balls, the corresponding eigen- and singular value distributions are given by $\bP_{n,p,\bW,f}$ on $\B^n_p$ (and its non-negative analogue on $\B^n_{p,+}$), with $f$ being the appropriate repulsion factor. This will be done in Section \ref{sec:EigenSingularDistr}. And third, Sections \ref{sec:ApplicationEucl} and \ref{sec:ApplicationMat} will then use the previous results to derive several large deviation principles for Euclidean and matrix $p$-balls, respectively. We will prove a large deviation principle for the empirical measure of the coordinates of a random vector with distribution  $\bP_{n,p,\bW}$ on $\B_p^n$. Then we will show large deviation principles for the empirical spectral measures (for eigenvalues and singular values) of random matrices distributed according to the analogue of  $\bP_{n,p,\bW}$ on matrix $p$-balls by using the representations of the eigenvalue and singular value distributions as  $\bP_{n,p,\bW,f}$ from Section \ref{sec:EigenSingularDistr} for suitable choices of $f$. In the following Section \ref{sec:Preliminaries} preliminaries and notation will be collected.
\section{Preliminaries and Notation}\label{sec:Preliminaries}
\subsection{Notation and important distributions}\label{subsec:Distributions}
In this paper, we denote by $\text{vol}_n$ the $n$-dimensional Lebesgue measure on $\R^n$.  If $\mathbb{X}$ is a topological space, we write $\mathcal{B}(\mathbb{X})$ for the $\sigma$-field of Borel sets in $\mathbb{X}$. For a random variable $X$ with distribution $\bP$ we write  $X \sim \bP$ and denote by $\E X$ its expectation. For two random variables $X,Y$ with the same distribution we write $X \overset{d}{=}Y$. 
	For a random variable $X$ we denote by $\Lambda_X$ its cumulant  generating function with $\Lambda_X(t) := \log \E \left[ e^{ t X} \right], t\in \R$, where we often omit the index when it is clear from context. We call $\mathcal{D}_\Lambda := \{t \in \R: \Lambda(t) < +\infty\}$ the effective domain of $\Lambda_X$. 
	Furthermore, we define its Legendre-Fenchel transform $\Lambda^*_X$ as 
	\begin{equation} \label{eq:DefLegendre}
	\Lambda^*_X(x) := \sup_{t \in \R} \big[ xt - \Lambda_X(t)\big], \qquad x\in \R.
	\end{equation}
	Note that where $\Lambda_X$ is differentiable, the Legendre-Fenchel transform $\Lambda^*_X$ is an involution, i.e., for all $t \in \R$ where $\Lambda_X$ is differentiable, $\Lambda_X(t)= (\Lambda^*_X)^*(t)$ (see e.g.\,\cite[Chapter 4, p.\,72]{Gelfand2000}).
We recall that a real valued random variable $X$ is gamma distributed with shape $a>0$ and rate $b>0$ if its distribution has density  
$$\rho_{\bG}(x):= \displaystyle \frac{b^a}{\Gamma(a)} \, x^{a-1} \, e^{-bx} \, \mathbf{1}_{(0,\infty)}(x), \qquad x\in\R,$$
with respect to the Lebesgue measure on $\R$. We denote this by $X \sim \bG(a,b)$. For $a=1$ we call this an exponential distribution and write $X \sim \bE(b)$. Similarly, a real valued random variable $X$ is beta distributed with parameters $a,b >0$ if its distribution has Lebesgue density 
$$\rho_{\textbf{B}}(x):= \displaystyle \frac{1}{B(a,b)} \, x^{a-1} \, (1-x)^{b-1} \, \mathbf{1}_{(0,1)}(x),\qquad x\in\R,$$
where $a,b>0$ and $B(\, \cdot \, , \, \cdot \,)$ is the beta function. We denote this by $X \sim \textbf{B}(a,b)$. Finally, a real valued random variable $X$ has a so-called generalized Gaussian distribution if its distribution has density 
$$\displaystyle \rho_{\textup{gen}}(x):=  \displaystyle \frac{b}{2 a\Gamma\big(\frac{1}{b}\big)} \, e^{-\big(\frac{|x- \mu|}{a}\big)^b},\qquad x\in\R,$$
where $\mu \in \R$ and $a,b>0$, and denote this by $X \sim {\bN}_{\textup{gen}}(\mu, a, b)$. The generalized Gaussians are intimately connected to the geometry of $\ell_p^n$-balls. As we will see in Proposition \ref{prop:Barthe}, the generalized Gaussian distributions are the essential building block when constructing useful probabilistically equivalent representations for random vectors in $\ell_p^n$-balls with a wide variety of distributions. For these constructions we will be using the specific generalized Gaussian distribution ${\bN}_{\textup{gen}}(0, 1, p)=: \bN_p$ (for any $p\in (0,\infty)$) with density
$$\displaystyle \rho_{\bN_p}(x) := \frac{1}{2\Gamma\big(1+\frac{1}{p}\big)}\, e^{-|x|^p},  \qquad x\in\R.$$
\begin{rmk}\label{rmk:DifferentNormalizations}
In the literature different normalizations for generalized Gaussian distributions are used. The papers \cite{APTldp}, \cite{APTclt}, \cite{GKR} and \cite{KimRamanan} for example consider ${\bN}_{\textup{gen}}(0, p^{1/p}, p)$, whereas \cite{BartheGuedonEtAl}, \cite{KPTEnsembles}, \cite{KPTSanov} and \cite{SchechtmanZinn} work with ${\bN}_{\textup{gen}}(0, 1, p)$. This merely results in different normalization factors when constructing probabilistic representations.
\end{rmk}
\subsection{Polar Integration}\label{subsec:PolarInt}
Since distributions given by radially symmetric densities play a central role in our results, we need a tool to work with them efficiently. This tool is provided by the polar integration formula. Let $K\subset \R^n, n \in \N,$ be a set that is star shaped with respect to the origin and has finite non-zero volume. We define the uniform distribution on $K$ and the cone probability measure on the boundary $\partial K$ as
$$
\bU_{K}(\,\cdot\,) := {\vol_n(\,\cdot\,)\over \vol_n(K)}\qquad\text{and}\qquad\bC_{K}(\,\cdot\,) := {\vol_n(\{rx:r\in[0,1],x\in\,\cdot\,\})\over\vol_n(K)},
$$
respectively.
We can now formulate the polar integration formula. 

\begin{lemma}\label{lem:PolarIngerationAllg}
	For any set $K \subset \R^n$, $n \in \N$, that is star shaped with respect to the origin, contains the origin in its interior, and has finite non-zero volume, and any non-negative measurable function $h:\R^n\to\R$ it holds that
	$$
	\int_{\R^n} h(x)\,\dint x = n \, \vol_n(K)\int_0^\infty r^{n-1}\int_{\partial K} h(ry) \, \bC_{K}(\dint y)\,\dint r.
	$$
\end{lemma}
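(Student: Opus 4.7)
\emph{Plan.} The cleanest route is to recognize the identity as a change-of-variables statement under the polar-type parametrization
$$\Phi : (0,\infty)\times\partial K \longrightarrow \R^n\setminus\{0\},\qquad \Phi(r,y) := ry,$$
and then to verify the identity of two Borel measures via a $\pi$-system argument. Because $K$ is star-shaped with respect to the origin and contains the origin in its interior, every ray emanating from the origin meets $\partial K$ in exactly one point; hence $\Phi$ is a bijection. Since $K$ has finite volume, $\partial K$ is bounded, and $\Phi$ and its inverse are continuous (and in particular Borel measurable). So the machinery of pushing a measure through $\Phi$ makes sense.

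\emph{Main step.} It suffices to prove the identity for $h=\mathbf{1}_A$ with $A\in\mathcal{B}(\R^n)$; the general case then follows by linearity, monotone convergence, and the usual passage from indicators to simple functions to non-negative measurables. Define
$$\mu(A) \;:=\; n\,\vol_n(K)\int_0^\infty r^{n-1}\int_{\partial K}\mathbf{1}_A(ry)\,\bC_K(\dint y)\,\dint r,\qquad A\in\mathcal{B}(\R^n),$$
so that $\mu$ is a Borel measure on $\R^n$ by Tonelli's theorem. I aim to show $\mu=\vol_n$. It is enough to check equality on the $\pi$-system
$$\mathcal{P}\;:=\;\bigl\{\,\Phi\bigl([0,t]\times B\bigr) \,:\, t>0,\ B\in\mathcal{B}(\partial K)\,\bigr\}\cup\{\{0\}\},$$
because $\mathcal{P}$ generates $\mathcal{B}(\R^n)$ (it separates points of $\R^n\setminus\{0\}$ and the origin is a $\mu$- and Lebesgue-null point), and both measures are $\sigma$-finite. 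For $A=\Phi([0,t]\times B)$ we have $\mathbf{1}_A(ry)=\mathbf{1}_{[0,t]}(r)\mathbf{1}_B(y)$, so
$$\mu(A) \;=\; n\,\vol_n(K)\,\bC_K(B)\int_0^t r^{n-1}\dint r \;=\; \vol_n(K)\,t^n\,\bC_K(B).$$
On the other hand, by the scale invariance of $\vol_n$ (homogeneity of degree $n$) together with the very definition of $\bC_K$,
$$\vol_n(A) \;=\; \vol_n\bigl(t\cdot\Phi([0,1]\times B)\bigr) \;=\; t^n\,\vol_n\bigl(\{ry:r\in[0,1],\ y\in B\}\bigr) \;=\; t^n\,\vol_n(K)\,\bC_K(B),$$
matching $\mu(A)$.

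\emph{Conclusion and difficulty.} By the uniqueness theorem for measures agreeing on a generating $\pi$-system (Dynkin / monotone class), $\mu=\vol_n$ on all of $\mathcal{B}(\R^n)$, which establishes the lemma for indicators; standard approximation extends it to every non-negative measurable $h$. The only genuinely delicate point is the verification that $\mathcal{P}$ generates $\mathcal{B}(\R^n)$, which requires that the polar bijection $\Phi$ be bi-measurable. This in turn rests on star-shapedness plus the assumption that $0$ is an interior point, both of which guarantee that the radial function $y\mapsto\sup\{r>0 : ry\in K\}$ is finite, positive and continuous on $\SSS^{n-1}$, making $\Phi^{-1}$ continuous off the origin. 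Everything else is a routine computation using only the scaling of Lebesgue measure and the definition of $\bC_K$.
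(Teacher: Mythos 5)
Your overall architecture — reduce to indicators, identify $\mu$ with $\vol_n$ by checking on a family of ``polar rectangles'' and invoking uniqueness of measures on a generating $\pi$-system, using only the homogeneity of Lebesgue measure and the definition of $\bC_K$ — is sound, and it is in the same spirit as the argument the paper defers to ([PTTSurvey, Prop.\ 3.3], which handles the convex-body case by essentially this change of variables). The endpoint computations $\mu(A)=\vol_n(K)\,t^n\,\bC_K(B)=\vol_n(A)$ are also correct.

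The problem is that several supporting claims you rely on are false for general star-shaped sets with the origin in the interior and finite volume; they are true for convex bodies (the setting of the cited reference), but the lemma here is stated more generally. Concretely: (i) The assertion that ``every ray emanating from the origin meets $\partial K$ in exactly one point, hence $\Phi$ is a bijection'' fails. For $K=\B_2^n\cup\{t e_1 : t\ge 0\}$ the ray in direction $e_1$ meets $\partial K$ along the whole half-line $\{t e_1 : t\ge 1\}$, so $\Phi$ is not injective; and for a ``funnel'' such as $K=\{(x_1,x') : |x'|<1/(1+x_1^2)\}$ the positive $e_1$-ray lies entirely in the interior of $K$ and never meets $\partial K$, so $\Phi$ is not surjective onto $\R^n\setminus\{0\}$ either. (ii) ``Since $K$ has finite volume, $\partial K$ is bounded'' is false: both examples above have finite volume and unbounded boundary. (iii) ``The radial function $y\mapsto\sup\{r>0:ry\in K\}$ is finite, positive and continuous on $\SSS^{n-1}$'' fails: it is $+\infty$ in direction $e_1$ in both examples, and it is generically discontinuous for non-convex star-shaped sets (e.g.\ the union of a unit disk with a larger half-disk).

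These are not cosmetic. Without injectivity of $\Phi$, the indicator identity $\mathbf{1}_{\Phi([0,t]\times B)}(ry)=\mathbf{1}_{[0,t]}(r)\mathbf{1}_B(y)$ can fail pointwise; and in the examples above, the class $\mathcal P$ cannot produce any nontrivial subset of the positive $e_1$-axis, so $\sigma(\mathcal P)$ is a proper sub-$\sigma$-field of $\mathcal B(\R^n)$ and the uniqueness theorem only yields $\mu=\vol_n$ on $\sigma(\mathcal P)$, not on all Borel sets. The statement that $\mathcal P$ generates $\mathcal B(\R^n)$ ``because it separates points'' is also not a valid inference. The repair is to isolate the exceptional set: show that the set of directions $u\in\SSS^{n-1}$ for which the ray meets $\partial K$ in anything other than exactly one point corresponds both to a $\bC_K$-null subset of $\partial K$ and to a Lebesgue-null cone in $\R^n$, then run the $\pi$-system argument on the complementary ``good'' cone (equivalently, replace $K$ by the star body determined by its radial function, which has the same volume and the same cone measure). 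In short, the plan is right, but the proof as written has a genuine gap precisely at the step you yourself flagged as the delicate one.
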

The proof of Lemma \ref{lem:PolarIngerationAllg} is the same as that of Proposition 3.3 in \cite{PTTSurvey}, which deals with the case where $K$ is a symmetric convex body, see also \cite[Proposition 1]{NR2003}.
When working with (non-negative) singular values in later sections, it will be convenient to have a version of the polar integration formula for the non-negative orthant $\R^n_+$ of $\R^n$.
\begin{cor}\label{cor:PolarIngerationPositive}
	For any set $K \subset \R^n_+$, $n \in \N$, that is star shaped with respect to the origin, contains the origin in its interior with respect to $\R^n_+$, and has finite non-zero volume, and any non-negative measurable function $h:\R^n_+\to\R$ it holds that
	$$
	\int_{\R^n_+} h(x)\,\dint x = n \, \vol_n(K)\int_0^\infty r^{n-1}\int_{\partial K} h(ry) \, \bC_{K}(\dint y)\,\dint r.
	$$
\end{cor}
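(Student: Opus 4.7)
The plan is to reduce the corollary to Lemma \ref{lem:PolarIngerationAllg} via a symmetrization argument. First I would define the reflected set
$$\hat K := \bigcup_{\sigma\in\{-1,+1\}^n}\sigma K,\qquad \sigma(x_1,\dots,x_n):=(\sigma_1 x_1,\dots,\sigma_n x_n),$$
and extend $h$ to $\hat h:\R^n\to[0,\infty)$ by $\hat h(x):=h(x)\mathbf{1}_{\R^n_+}(x)$. The goal is to apply Lemma \ref{lem:PolarIngerationAllg} to the pair $(\hat K,\hat h)$ in $\R^n$ and then show that the extra factors from the $2^n$ reflections cancel exactly.

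Before doing so, I would verify that $\hat K$ satisfies the hypotheses of Lemma \ref{lem:PolarIngerationAllg}: it is star-shaped with respect to the origin in $\R^n$ (as each $\sigma K$ is, and they all share the origin); it contains the origin in its $\R^n$-interior because the assumption that $K$ contains the origin in its interior with respect to $\R^n_+$ means some set $B_\varepsilon(0)\cap\R^n_+$ lies in $K$, so $B_\varepsilon(0)\subset\hat K$; and its volume is finite and nonzero, in fact $\vol_n(\hat K)=2^n\vol_n(K)$, since distinct reflected copies $\sigma K$ and $\sigma' K$ meet only in coordinate hyperplanes, which are Lebesgue null.

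Next I would check how the cone probability measure behaves under reflection. For any Borel set $A\subset\partial K$, the cone $\{ry:r\in[0,1],\,y\in A\}$ lies in $\R^n_+$ and hence in $K$, so by the definition of the cone measure
$$\bC_{\hat K}(A)=\frac{\vol_n(\{ry:r\in[0,1],\,y\in A\})}{\vol_n(\hat K)}=\frac{\vol_n(K)\,\bC_K(A)}{2^n\vol_n(K)}=2^{-n}\bC_K(A).$$
Since $\hat h(ry)=0$ whenever $ry\notin\R^n_+$, i.e., outside $\partial K$ (for $r>0$), only this portion of $\partial\hat K$ contributes to the $\bC_{\hat K}$-integral. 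Applying Lemma \ref{lem:PolarIngerationAllg} thus yields
$$\int_{\R^n_+}h(x)\dint x=\int_{\R^n}\hat h(x)\dint x=n\cdot 2^n\vol_n(K)\int_0^\infty r^{n-1}\int_{\partial K}h(ry)\,2^{-n}\bC_K(\dint y)\dint r,$$
and the factors $2^n$ and $2^{-n}$ cancel, giving the claim.

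The only delicate step — not really an obstacle — is the bookkeeping around the coordinate hyperplanes: one must confirm that the overlaps between the reflected copies $\sigma K$ have Lebesgue measure zero in $\R^n$ and that the corresponding overlaps on $\partial\hat K$ carry no cone mass, so that the decomposition of $\bC_{\hat K}$ into its $2^n$ symmetric pieces is clean. For the sets of interest in later sections (most notably $K=\B^n_{p,+}$) this is immediate.
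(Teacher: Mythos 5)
Your symmetrization argument is correct. The paper itself states Corollary~\ref{cor:PolarIngerationPositive} without proof, implicitly treating it as following by the same change-of-variables computation as Lemma~\ref{lem:PolarIngerationAllg} (which in turn references \cite{PTTSurvey}); that would amount to rederiving the polar integration formula directly on $\R^n_+$. Your approach is a genuine alternative: instead of repeating the derivation, you reduce the orthant statement to the already-proved Lemma~\ref{lem:PolarIngerationAllg} by reflecting $K$ across all coordinate hyperplanes to form $\hat K$ and extending $h$ by zero. All the key steps check out: $\hat K$ inherits star-shapedness (if $\sigma x\in K$ then $\sigma(rx)=r\sigma x\in K$), the origin lies in the $\R^n$-interior of $\hat K$ by the argument you give, $\vol_n(\hat K)=2^n\vol_n(K)$ since the $\sigma K$ overlap only on Lebesgue-null coordinate hyperplanes, the identity $\bC_{\hat K}(A)=2^{-n}\bC_K(A)$ for Borel $A\subset\partial K$ follows directly from the cone-volume definition of $\bC_{\hat K}$ (and it is legitimate to evaluate $\bC_{\hat K}$ on $A$ because the formula defines a measure on all Borel sets of $\R^n$, concentrated on the radial boundary; the parts of $\partial K\setminus\partial\hat K$ and the hyperplane overlaps carry zero cone mass since cones over lower-dimensional sets have zero $n$-volume), and the factors $2^n$ and $2^{-n}$ cancel as claimed. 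What the reduction buys is that you avoid re-checking the measurability/Fubini details of the polar change of variables a second time; what the direct rederivation buys is slightly less bookkeeping around the null sets. Both are sound, and you correctly identify the only delicate point.
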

\subsection{Geometry of $\ell_p^n$-balls}\label{subsec:Geometry}
 For $0<p\leq \infty$, $n\in\N$, and $x=(x_1,\ldots,x_n)\in\R^n$ let us denote by
$$
\|x\|_p := \begin{cases}
\Big(\sum\limits_{i=1}|x_i|^p\Big)^{1/p} &: p < \infty\\
\max\{|x_1|,\ldots,|x_n|\} &: p=\infty
\end{cases}
$$
the $\ell_p^n$-norm of $x$ (although this is only a quasi-norm for $0<p<1$). We let $\B_p^n:=\{x\in\R^n:\|x\|_p\leq 1\}$ be the unit $\ell_p^n$-ball and $\SSS_p^{n-1}:=\{x\in\R^n:\|x\|_p=1\}$ be the corresponding unit $\ell_p^n$-sphere. By $\bU_{n,p}:=\bU_{\B_p^n}$ we indicate the uniform distribution on $\B_p^n$ and by $\bC_{n,p}:=\bC_{\B_p^n}$ the cone probability measure on $\SSS_p^{n-1}$. Using the general polar integration formula from Lemma \ref{lem:PolarIngerationAllg} for $K=\B_p^n$ yields the polar integration formula for $\ell_p^n$-balls, which says that for any non-negative measurable function $h:\R^n\to\R$ it holds that
\begin{equation}\label{eq:PolarIngeration}
\int_{\R^n}h(x)\,\dint x = n \, \vol_n(\B_p^n)\int_0^\infty r^{n-1}\int_{\SSS_p^{n-1}}h(ry) \, \bC_{n,p}(\dint y)\,\dint r.
\end{equation}

The following result provides a probabilistic representation of certain mixtures of $\bU_{n,p}$ and $\bC_{n,p}$, see \cite[Theorem 3]{BartheGuedonEtAl}. It serves as a motivation for the results we present in Section \ref{sec:Weighted} below.

\begin{proposition}\label{prop:Barthe}
Let $n\in\N$ and $p\in(0,\infty)$. Let $\bW$ be a Borel probability measure on $[0,\infty)$ and $W$ be a random variable with $W \sim \bW$. Further, let $X_1, \ldots, X_n$ be independent and identically distributed random variables with $X_i \sim \bN_p$, which are independent of $W$. Then the random vector
$$
X\over (\|X\|_p^p+W)^{1/p}
$$
\noindent has distribution 
$$\bP_{n,p,\bW} := \bW(\{0\})\bC_{n,p}+\Psi\bU_{n,p}$$ 
on $\B^n_p$, where $\Psi(x)=\psi(\|x\|_p)$, $x\in\B_p^n$, is a $p$-radial density with
$$
\displaystyle \psi(s) = \displaystyle {1\over \Gamma\big({n\over p}+1\big)}{1\over (1-s^p)^{{n\over p}+1}}\bigg[\int_{(0,\infty)} w^{n\over p} \, e^{-{s^p\over 1-s^p} w} \, \bW(\dint w)\bigg],\qquad 0\leq s\leq 1.
$$
\end{proposition}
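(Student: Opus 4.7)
The plan is to compute $\E[h(Y)]$ for an arbitrary bounded measurable test function $h:\B_p^n\to\R$, where $Y = X/(\|X\|_p^p+W)^{1/p}$, and to identify the resulting measure as the asserted sum. I would decompose $\bW = \bW(\{0\})\,\delta_0 + \bW|_{(0,\infty)}$ and treat the two pieces separately, since the atom at zero is what produces the cone-measure part supported on $\SSS_p^{n-1}$, while the restriction to $(0,\infty)$ produces the absolutely continuous part with density $\Psi$.

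For the atomic piece, conditional on $W=0$ the random vector $Y$ reduces to $X/\|X\|_p$, and the Schechtman--Zinn representation (the $\bW=\delta_0$ case of this very proposition, already known from \cite{SchechtmanZinn}) gives $X/\|X\|_p\sim \bC_{n,p}$; hence this part contributes exactly $\bW(\{0\})\,\bC_{n,p}$. For the continuous piece I would condition on $W=w>0$ and write
\[
\E\!\left[h(Y)\mathbf{1}\{W>0\}\right] = \frac{1}{(2\Gamma(1+1/p))^n}\int_{(0,\infty)}\!\!\int_{\R^n} h\!\left(\frac{x}{(\|x\|_p^p+w)^{1/p}}\right)e^{-\|x\|_p^p}\,\dint x\,\bW(\dint w),
\]
using the product density of the generalized Gaussian $X$. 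Then I would apply the polar integration formula \eqref{eq:PolarIngeration} for $\B_p^n$ to write the inner integral as $n\,\vol_n(\B_p^n)\int_0^\infty r^{n-1}e^{-r^p}\int_{\SSS_p^{n-1}}h\!\left(ry/(r^p+w)^{1/p}\right)\bC_{n,p}(\dint y)\,\dint r$.

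The key computation is the change of variables $s = r/(r^p+w)^{1/p}$, which maps $(0,\infty)$ bijectively onto $(0,1)$ with $r^p = s^p w/(1-s^p)$ and, after a short calculation, $r^{n-1}\,\dint r = s^{n-1}w^{n/p}(1-s^p)^{-(n/p+1)}\,\dint s$, while $e^{-r^p} = e^{-s^p w/(1-s^p)}$. After substituting, the integrand depends on $y$ and $s$ only through $sy$, so I would invoke the polar integration formula in reverse (with $z=sy\in\B_p^n$, $\|z\|_p=s$) to rewrite the inner double integral as a single integral over $\B_p^n$. Interchanging the $w$- and $z$-integrations by Tonelli then yields
\[
\E[h(Y)\mathbf{1}\{W>0\}] = \int_{\B_p^n} h(z)\,\frac{1}{(2\Gamma(1+1/p))^n}\cdot\frac{1}{(1-\|z\|_p^p)^{n/p+1}}\!\int_{(0,\infty)}\!\! w^{n/p}\,e^{-\|z\|_p^p w/(1-\|z\|_p^p)}\,\bW(\dint w)\,\dint z.
\]
Using the known volume $\vol_n(\B_p^n) = (2\Gamma(1+1/p))^n/\Gamma(n/p+1)$ to convert the reference measure from $\dint z$ to $\bU_{n,p}(\dint z)$ produces precisely the density $\psi(\|z\|_p)$ claimed in the statement. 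Combining the two contributions gives $\bP_{n,p,\bW}$.

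I do not expect a serious obstacle: the argument is essentially a disintegration plus a one-variable change of variables. The only point requiring a little care is matching normalizations, namely keeping track of the constants $(2\Gamma(1+1/p))^n$, $\vol_n(\B_p^n)$, and $\Gamma(n/p+1)$ so that the final density w.r.t.\ $\bU_{n,p}$ is exactly $\psi(\|z\|_p)$ and the total mass equals $\bW(\{0\})+\bW((0,\infty))=1$; a useful consistency check is that integrating the absolutely continuous part over $\B_p^n$ should give $\bW((0,\infty))$, which follows from $\int_0^1 n s^{n-1}\psi(s)(1-s^p)^{-\!?}\ldots$-type manipulations or, more cleanly, by applying the same polar integration in reverse to the computed density.
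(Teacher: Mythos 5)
Your argument is correct and essentially mirrors the proof the paper gives for the more general Theorem~\ref{thm:ProbablisticRepresentation} (Proposition~\ref{prop:Barthe} is the $f\equiv 1$ specialization and is cited from \cite{BartheGuedonEtAl} rather than re-proved directly): disintegrate over $W$, apply the polar integration formula~\eqref{eq:PolarIngeration}, substitute $r^p = s^p w/(1-s^p)$, and identify the resulting $p$-radial density against $\bU_{n,p}$. The only stylistic difference is that you close the loop by applying polar integration in reverse together with Tonelli, whereas the paper's proof compares the disintegrated expression in~\eqref{eq:PolarIntegrationFinal1} with a generic identity~\eqref{eq:PolarIntegrationFinal2} valid for any $p$-radial density; both are equivalent bookkeeping for the same computation.
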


\subsection{Geometry of matrix $p$-balls}\label{subsec:GeometryMatrix}
Let $\F_\beta$ be the real numbers (if $\beta=1$), the complex numbers (if $\beta=2$) or the Hamiltonian quaternions (if $\beta=4$). For $n\in\N$ and $\beta\in\{1,2,4\}$ we let $\Mat_{n}(\mathbb F_\beta)$ be the space of $(n\times n)$-matrices with entries from $\F_\beta$. For a matrix $A\in \Mat_{n}(\mathbb F_\beta)$ we let $A^*$ be the adjoint of $A$. It is well known that, together with the scalar product $\langle A, B \rangle = \Re \Tr (A B^*)$, $\mathscr M_n(\mathbb{F}_\beta)$ becomes a Euclidean vector space. By $\vol_{\beta,n}(\,\cdot\,)$ we denote the volume on $\mathscr M_n(\mathbb{F}_\beta)$ corresponding to this scalar product. We can now introduce the self-adjoint matrix space $\mathscr H_n(\mathbb{F}_\beta) := \{A\in \Mat_{n}(\mathbb F_\beta):A=A^*\}$.   For each $A\in \mathscr H_n(\mathbb{F}_\beta)$ we denote by $\lambda_1(A) \le\ldots \le\lambda_n(A)$ the (real) eigenvalues of $A$ (see \cite[Appendix E]{AGZ2010} for a formal definition in the case $\beta=4$) and define $\lambda(A):=(\lambda_1(A),\ldots,\lambda_n(A)) \in \R^n$. For $0<p\leq\infty$ the self-adjoint matrix $p$-ball in $\mathscr H_n(\mathbb{F}_\beta)$ is defined as
$$
\B_{p,\beta}^{n, \mathscr{H}} := \Big\{A\in\mathscr H_n(\mathbb{F}_\beta) : \sum_{i=1}^n|\lambda_i(A)|^p\leq 1\Big\},
$$
where we interpret the condition as $\max\{|\lambda_1(A)|,\ldots,|\lambda_n(A)|\}\leq 1$ if $p=\infty$. Similarly, we let
$$
\SSS_{p,\beta}^{n-1, \mathscr{H}} := \Big\{A\in\mathscr H_n(\mathbb{F}_\beta) : \sum_{i=1}^n|\lambda_i(A)|^p = 1\Big\}
$$
be the self-adjoint matrix $p$-sphere. 
%
%
The uniform distribution on $\B_{p,\beta}^{n, \mathscr{H}}$ and the cone probability measure on $\SSS_{p,\beta}^{n-1, \mathscr{H}}$ are denoted by $\bU_{n,p,\beta}^{\mathscr{H}}$ and $\bC_{n,p,\beta}^{\mathscr{H}}$, respectively. %
In the self-adjoint case, one can identify the matrix $p$-balls by virtue of the eigenvalues. We now consider the non-self-adjoint case, where this will be done via the singular values. For $A\in\Mat_n(\F_\beta)$, $n\in \N$, we denote by $s_1(A) \le \ldots \le s_n(A)$ the singular values of $A$, that is, $s_1(A),\ldots,s_n(A)$ are the non-negative eigenvalues of  $\sqrt{AA^*}$ (if $\beta\in\{1,2\}$ and if $\beta=4$ we refer to \cite[Corollary E.13]{AGZ2010} for a formal definition) and define $s(A):=(s_1(A),\ldots,s_n(A)) \in \R^n_+$. Additionally, we set  $s^2(A):=(s^2_1(A),\ldots,s^2_n(A)) \in \R^n_+$ to be the vector of squared ordered singular values. We do so, as the coordinates of $s^2(A)$ are the eigenvalues of $AA^*$ and can hence be treated in a fashion analogue to the vector of eigenvalues without needing to account for the root-operation. For $0<p\leq\infty$ the non-self-adjoint matrix $p$-ball is defined as
$$
\B_{p,\beta}^{n,{\mathscr{M}}} := \Big\{A\in\Mat_n(\F_\beta):\sum_{i=1}^n|s_i(A)|^p\leq 1\Big\},
$$
once again with the convention that the condition is replaced by $\max\{|s_1(A)|,\ldots,|s_n(A)|\}\leq 1$ if $p=\infty$. We also denote by
$$
\SSS_{p,\beta}^{n-1,{\mathscr{M}}} := \Big\{A\in\Mat_n(\F_\beta):\sum_{i=1}^n|s_i(A)|^p = 1\Big\}
$$
the non-self-adjoint matrix $p$-sphere. The uniform distribution on $\B_{p,\beta}^{n,{\mathscr{M}}}$ is denoted by $\bU_{n,p,\beta}^{\mathscr{M}}$ and we let $\bC_{n,p,\beta}^{\mathscr{M}}$ be the cone probability measure on $\SSS_{p,\beta}^{n-1,{\mathscr{M}}}$. %
 %
 %
%
Since the singular values are non-negative, we define the non-negative parts of the $\ell_p^n$-ball and $\ell_p^n$-sphere as $\B^n_{p,+}:=\B^n_p \cap \R^n_+$ and $\SSS^{n-1}_{p,+}:=\SSS_p^{n-1} \cap \R^n_+$. Accordingly, we define the respective uniform distribution $\bU_{n,p,+}:=\bU_{\B^n_{p,+}}$ and cone probability measure $\bC_{n,p,+}:=\bC_{\SSS^{n-1}_{p,+}}$.

\begin{rmk}\label{rmk:starsharped}\text{}
\begin{itemize} 
\item[(i)] Note that both $\mathscr H_n(\mathbb{F}_\beta)$ and $\mathscr M_n(\mathbb{F}_\beta)$ are Euclidean vector spaces of dimensions $\frac{\beta n (n-1)}{2} + \beta n$ and $\beta n^2$, respectively, and $\B_{p,\beta}^{n, \mathscr{H}}$ and $\B_{p,\beta}^{n, \mathscr{M}}$ both contain their respective origin in their interior and are star shaped with respect to their origins, as $\|\lambda(\kappa\,A)\|_p = \kappa\|\,\lambda(A)\|_p \le 1$ for $\kappa \in[0,1]$ (analogue for $\B_{p,\beta}^{n,{\mathscr{M}}}$). Finally, the volumes of $\B_{p,\beta}^{n,{\mathscr{H}}}$ and $\B_{p,\beta}^{n,{\mathscr{M}}}$ are non-zero and bounded (see e.g. \cite{KPTEnsembles, KPTVolumeRatio}). Hence, they both satisfy the conditions of the general polar integration formula in Lemma \ref{lem:PolarIngerationAllg}.
\item[(ii)] When referring to $\B^n_p$ as the \enquote{Euclidean} $\ell_p^n$-ball the term is supposed to denote the commutative setting of $\B^n_p$  in contrast to matrix $p$-balls $\B_{p,\beta}^{n,{\mathscr{H}}}$ and $\B_{p,\beta}^{n,{\mathscr{M}}}$ in the non-commutative setting of matrix space, although the matrix spaces themselves being Euclidean vector spaces.
\end{itemize}
\end{rmk}

 
For a Borel probability measure $\bW$ on $[0,\infty)$ we can now construct the analogues of the measure $\bP_{n,p,\bW}$ on the matrix $p$-balls $\B_{p,\beta}^{n,\mathscr{H}}$ and $\B_{p,\beta}^{n,{\mathscr{M}}}$ as 
\begin{equation}
\bP_{n,p,\bW,\beta}^{\mathscr{H}} :=\bW(\{0\})\bC_{n,p,\beta}^{\mathscr{H}}+\Psi^{\mathscr{H}}\bU_{n,p,\beta}^{\mathscr{H}} \text{ on } \B_{p,\beta}^{n,\mathscr{H}}, \text{ with } \Psi^{\mathscr{H}}(A) := \psi^{\mathscr{H}}(\|\lambda(A)\|_p), A\in \B_{p,\beta}^{n,\mathscr{H}}, \label{eq:PnpwSA}
\end{equation} 
and 
\begin{equation}\bP_{n,p,\bW,\beta}^{\mathscr{M}} :=\bW(\{0\})\bC_{n,p,\beta}^{\mathscr{M}}+\Psi^{\mathscr{M}}\bU_{n,p,\beta}^{\mathscr{M}} \text{ on } \B_{p,\beta}^{n,{\mathscr{M}}}, \text{ with } \Psi^{\mathscr{M}}(A) := \psi^{\mathscr{M}}(\|s^2(A)\|_p), A\in \B_{p,\beta}^{n,\mathscr{M}},\label{eq:PnpwNSA}
\end{equation}  
where $\psi^{\mathscr{H}}(s)$ and $\psi^{\mathscr{M}}(s)$ are $p$-radial densities given by
$$
 \displaystyle {1\over \Gamma\big({n+m\over p}+1\big)}{1\over (1-s^p)^{{n+m\over p}+1}}\bigg[\int_{(0,\infty)} w^{n+m\over p} \, e^{-{s^p\over 1-s^p}w} \, \bW(\dint w)\bigg],\qquad 0\leq s\leq 1,
$$
with $m= \frac{\beta n(n-1)}{2}$ for $\psi^{\mathscr{H}}(s)$, and $m=\frac{\beta }{2}n^2-n$ for $\psi^{\mathscr{M}}(s)$.
\\
\\
We define our distribution classes on matrix $p$-balls similarly to those on Euclidean $\ell_p^n$-balls via a $p$-radial distribution. Although we do not yet have a probabilistic representation for $\bP_{n,p,\bW,\beta}^{\mathscr{H}}$ and $\bP_{n,p,\bW,\beta}^{\mathscr{M}}$ as in Proposition \ref{prop:Barthe}, we still want to analyse the eigenvalue and singular value distribution of random matrices selected on $\B_{p,\beta}^{n,{\mathscr{H}}}$ and $\B_{p,\beta}^{n,{\mathscr{M}}}$ according to these distributions. We will be able to achieve this by establishing a new connection between these distributions on matrix $p$-balls and suitably weighted ($p$-radial) distributions on Euclidean $\ell_p^n$-balls. In contrast to the results of Proposition \ref{prop:Barthe} however, we need to account for the repulsion between the eigenvalues and singular values, hence the $p$-radial densities $\psi^{\mathscr{H}}(s)$, $\psi^{\mathscr{M}}(s)$ look different than the $\psi$ in Proposition \ref{prop:Barthe}, insofar as  the $n$ in $\psi$ is replaced by $n+m$, with $m$ being the degree of homogeneity $m$ of these repulsion factors. We will denote these repulsion factors of the eigen- and sigular values by $\Delta_\beta^c$ and $\nabla_\beta^c$ (formal definitions will follow in Section \ref{sec:EigenSingularDistr}) and as we will see, the two values for $m$ in the definitions \eqref{eq:PnpwSA} and \eqref{eq:PnpwNSA} are their respective degrees of homogeneity. We will explain this in further detail in the following sections. Also, the fact that $\bP_{n,p,\bW,\beta}^{\mathscr{H}}$ and $\bP_{n,p,\bW,\beta}^{\mathscr{M}}$ are in fact probability measures will follow directly from their probabilistic representations in Theorem \ref{thm:EvDistr} and Theorem \ref{thm:SvDistr}, respectively.

\subsection{Background material from large deviations theory}\label{subsec:PrelimLDP} We will need some basic results from large deviations theory. To keep this paper self-contained, we will present them here, while referring the reader to \cite{DZ,dH,Kallenberg} for further background material on large deviations.

\begin{df}
Let $\mathbb{X}$ be a Polish space equipped with the Borel $\sigma$-field $\mathcal{B}(\mathbb{X})$ and $(\textup{\textbf{P}}_n)_{n\in\N}$ a sequence of probability measures on $\mathbb{X}$. We say that $(\textup{\textbf{P}}_n)_{n\in\N}$ satisfies a large deviation principle (LDP) if there are two functions $s:\N\to (0,\infty)$ and $\mathcal{I}:\mathbb{X}\to[0,\infty]$, such that $\mathcal{I}$ is lower semi-continuous and
\begin{center}
$
\begin{array}{lllll}
a)&\displaystyle \underset{n\to\infty}{\textup{lim sup}} \,\, \frac{1}{s(n)}\log \textup{\textbf{P}}_n(C) &\le& -\mathcal{I}(C) &\,\,\,\, \,\,\,\, \,\,\,\, \,\,\,\, \text{ for all } \,\, C \in  \mathcal{B}(\mathbb{X}) \text{ closed,}\\
b)&\displaystyle \underset{n\to\infty}{\textup{lim inf}} \,\, \frac{1}{s(n)}\log \textup{\textbf{P}}_n(O) &\ge& -\mathcal{I}(O) &\,\,\,\, \,\,\,\, \,\,\,\, \,\,\,\, \text{ for all } \,\, O\in  \mathcal{B}(\mathbb{X})  \text{ open,}\\
\end{array}
$
\end{center}
where for $B \in  \mathcal{B}(\mathbb{X}) $ we define $\mathcal{I}(B) :=\displaystyle\inf_{x\in B}\mathcal{I}(x)$. We call $s$ the speed and $\mathcal{I}$ the rate function. We say that $\mathcal{I}$ is a good rate function, if it has compact sub-level sets.
\end{df}

Frequently LDPs are defined for sequences $(X_n)_{n\in\N}$ of random variables by applying the above definition to the sequence of their distributions. We apply the definition of LDPs to sequences of random measures as well. For a Polish space $\mathbb{X}$ we denote by $\mathcal{M}(\mathbb{X})$ the space of probability measures on $\mathbb{X}$ endowed with the weak topology and recall that $\mathcal{M}(\mathbb{X})$ is itself again Polish. %
Now we can go forward with presenting the results from large deviations theory. The first concerns the large deviation behavior of two sequences of random variables with the same speed in a product space. For the result and its proof, see \cite[Proposition 2.4 \& Appendix A]{APTldp}.

\begin{proposition}\label{prop:ProdLDP}
Let $\mathbb{X},\mathbb{Y}$ be Polish spaces. Let $(X_n)_{n\in\N}$, $(Y_n)_{n\in\N}$ be sequences of random variables in $\mathbb{X}$ and $\mathbb{Y}$, respectively. Assume that $(X_n)_{n\in\N}$ and $(Y_n)_{n\in\N}$ are independent. Further assume that both $(X_n)_{n\in\N}$ and $(Y_n)_{n\in\N}$ satisfy LDPs with the same speed $s(n)$ and respective good rate functions $\mathcal{I}_X:\mathbb{X}\to[0,\infty]$ and $\mathcal{I}_Y:\mathbb{Y}\to[0,\infty]$. Consider the sequence of random variables $(Z_n)_{n\in\N}$ on $\mathbb{X}\times \mathbb{Y}$ with $Z_n =(X_n, Y_n)$. Then $(Z_n)_{n\in\N}$ satisfies an LDP with speed $s(n)$ and good rate function $\mathcal{I}_{Z}$ with $\mathcal{I}_{Z}(z) = \mathcal{I}_{X}(x) + \mathcal{I}_{Y}(y)$ for all $z=(x,y) \in \mathbb{X}\times \mathbb{Y}$.

\end{proposition}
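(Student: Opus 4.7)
The proposition is a standard product LDP, so the plan is to verify (i) that $\mathcal{I}_Z(x,y) := \mathcal{I}_X(x) + \mathcal{I}_Y(y)$ is a good rate function, (ii) the large deviation upper bound, and (iii) the large deviation lower bound, exploiting throughout the product structure of $\mathbb{X}\times\mathbb{Y}$ (with the product topology, which is Polish) and the independence of $X_n$ and $Y_n$.

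First I would check that $\mathcal{I}_Z$ is good. Lower semi-continuity is immediate, since the sum of two lower semi-continuous $[0,\infty]$-valued functions is lower semi-continuous. For compactness of sub-level sets, I would observe the inclusion
$$
\{(x,y)\in\mathbb{X}\times\mathbb{Y}:\mathcal{I}_Z(x,y)\le\alpha\}\subseteq \{\mathcal{I}_X\le\alpha\}\times\{\mathcal{I}_Y\le\alpha\},
$$
where the right-hand side is compact in the product topology by Tychonoff and goodness of $\mathcal{I}_X,\mathcal{I}_Y$, while the left-hand side is closed by lower semi-continuity. Hence $\mathcal{I}_Z$ is good, which in particular gives that $(Z_n)_{n\in\N}$ is exponentially tight: for every $M>0$, goodness of $\mathcal{I}_X$ and $\mathcal{I}_Y$ together with their individual upper bounds produce compact $K_X^M\subset\mathbb{X}$ and $K_Y^M\subset\mathbb{Y}$ with $\limsup s(n)^{-1}\log\Pro(X_n\notin K_X^M)\le -M$ and similarly for $Y_n$, so that $K_X^M\times K_Y^M$ is a compact set witnessing exponential tightness of $(Z_n)_{n\in\N}$.

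For the lower bound on an open set $O\subset\mathbb{X}\times\mathbb{Y}$, I would fix $(x,y)\in O$ and choose open neighborhoods $U\ni x$ and $V\ni y$ with $U\times V\subset O$. By independence,
$$
\Pro(Z_n\in O)\ge \Pro(X_n\in U)\,\Pro(Y_n\in V),
$$
and applying $s(n)^{-1}\log(\,\cdot\,)$ and $\liminf_{n\to\infty}$ (using the marginal LDP lower bounds) yields $\liminf s(n)^{-1}\log\Pro(Z_n\in O)\ge -\mathcal{I}_X(x)-\mathcal{I}_Y(y)$. Taking the infimum over $(x,y)\in O$ gives the desired lower bound.

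The main work is the upper bound for closed sets, which by exponential tightness reduces to the upper bound for compact sets $K\subset\mathbb{X}\times\mathbb{Y}$. I would fix $\eps>0$ and, for each $(x,y)\in K$, choose open neighborhoods $U_{x,y}\ni x$ and $V_{x,y}\ni y$ whose closures satisfy
$$
\limsup_{n\to\infty}\frac{1}{s(n)}\log\Pro(X_n\in\overline{U_{x,y}})\le -\min\{\mathcal{I}_X(x),\eps^{-1}\}+\eps,
$$
and analogously for $\Pro(Y_n\in\overline{V_{x,y}})$; such neighborhoods exist by the marginal upper bounds applied to small enough open sets. Independence gives
$$
\Pro(Z_n\in \overline{U_{x,y}}\times\overline{V_{x,y}})=\Pro(X_n\in\overline{U_{x,y}})\,\Pro(Y_n\in\overline{V_{x,y}}),
$$
so the corresponding $\limsup s(n)^{-1}\log$ is bounded by $-\min\{\mathcal{I}_Z(x,y),\eps^{-1}\}+2\eps$. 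Extracting a finite subcover of $K$ and applying the standard principle of the largest term yields the upper bound with an $\eps$-error, and letting $\eps\downarrow 0$ finishes the argument. The only delicate point is this finite-subcover/principle-of-largest-term step, but it is a routine ingredient of LDP arguments once the product neighborhoods are chosen correctly.
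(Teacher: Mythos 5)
The paper does not prove this proposition itself; it cites \cite[Proposition~2.4 and Appendix~A]{APTldp} for both statement and proof, and the argument there is the standard product-LDP argument that you have reconstructed: goodness of $\mathcal{I}_Z$ via sub-level sets contained in products of sub-level sets, the lower bound via product open neighborhoods and independence, and the upper bound via exponential tightness, small product neighborhoods shrunk to control $\inf_{\overline{U}}\mathcal{I}_X$ and $\inf_{\overline{V}}\mathcal{I}_Y$, a finite subcover, and the principle of the largest term. So your proof is correct and follows essentially the same route as the cited reference.

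One small point worth making explicit rather than implicit: your exponential-tightness step leans on the fact that on a \emph{Polish} space an LDP with a \emph{good} rate function implies exponential tightness of the sequence. This is true and standard (it is the content of Exercise~1.2.19 in Dembo--Zeitouni), but it is not a one-line consequence of goodness alone --- the complement of a compact sub-level set is open, so the upper bound does not apply to it directly, and the usual proof uses separability/completeness to build the compact sets as intersections of finite unions of small balls. It would strengthen the write-up to cite this fact rather than present it as immediate. Also note that $\min\{\mathcal{I}_X(x),\eps^{-1}\}+\min\{\mathcal{I}_Y(y),\eps^{-1}\}\ge\min\{\mathcal{I}_X(x)+\mathcal{I}_Y(y),\eps^{-1}\}$ is exactly the inequality you need when combining the two marginal bounds; it is true since both summands are non-negative, but it deserves a line. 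With these two remarks filled in, the proof is complete.
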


The next result is the so called contraction principle and it allows us to transport an LDP from one sequence of random variables to another by means of a continuous map. The result can be found, e.g., in \cite[Theorem 4.2.1]{DZ}.

\begin{proposition}\label{prop:ContrPrinc}
Let $\mathbb{X}, \mathbb{Y}$ be Polish spaces and $f:\mathbb{X}\to \mathbb{Y}$ a continuous function. Also let $(X_n)_{n\in\N}$ be a sequence of random variables in $\mathbb{X}$ that satisfies an LDP with speed $s(n)$ and good rate function $\mathcal{I}_{X}$. Then the sequence of random variables $(Y_n)_{n\in\N}: = (X_n\circ f^{-1})_{n\in\N}$ satisfies an LDP with speed $s(n)$ and good rate function $\mathcal{I}_{Y}(y)=\inf\{\mathcal{I}_{X}(x) \,|\,x \in \mathbb{X}, f(x)=y\}$.
\end{proposition}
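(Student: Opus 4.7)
The plan is to transport the LDP from $(X_n)$ to $(Y_n)$ by exploiting two features of a continuous $f$: preimages of closed (resp.\ open) sets remain closed (resp.\ open), and images of compact sets remain compact. I would proceed by first verifying that $\mathcal{I}_Y$ is a good rate function, then establishing the upper bound on closed sets, and finally the lower bound on open sets. In each step the continuity of $f$ does all the topological work, while a short calculation reconciles the two infima.

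First I would check that $\mathcal{I}_Y$ is a good rate function. The key identity is
\[
\{y \in \mathbb{Y} : \mathcal{I}_Y(y) \le \alpha\} \;=\; f\bigl(\{x \in \mathbb{X} : \mathcal{I}_X(x) \le \alpha\}\bigr),
\]
which is immediate from the definition $\mathcal{I}_Y(y) = \inf\{\mathcal{I}_X(x) : f(x) = y\}$, once one notes that the infimum is attained whenever $f^{-1}(\{y\})$ meets a sublevel set of $\mathcal{I}_X$ (using that sublevel sets of $\mathcal{I}_X$ are compact and $f^{-1}(\{y\})$ is closed). Since $\mathcal{I}_X$ is good, its sublevel sets are compact, and as continuous images of compact sets are compact, the sublevel sets of $\mathcal{I}_Y$ are compact as well. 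In particular $\mathcal{I}_Y$ is lower semi-continuous, so it is a good rate function.

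Next, for any closed $C \subset \mathbb{Y}$, continuity of $f$ gives that $f^{-1}(C)$ is closed in $\mathbb{X}$. Writing $\mathbb{P}(Y_n \in C) = \mathbb{P}(X_n \in f^{-1}(C))$ and applying the upper bound of the LDP for $(X_n)$ yields
\[
\limsup_{n\to\infty}\frac{1}{s(n)}\log \mathbb{P}(Y_n \in C) \;\le\; -\inf_{x \in f^{-1}(C)}\mathcal{I}_X(x).
\]
The same argument with an open $O \subset \mathbb{Y}$ and $f^{-1}(O)$ open yields the corresponding lower bound. It remains to match the infima: by definition
\[
\inf_{x \in f^{-1}(B)}\mathcal{I}_X(x) \;=\; \inf_{y \in B}\,\inf_{x \in f^{-1}(\{y\})}\mathcal{I}_X(x) \;=\; \inf_{y \in B}\mathcal{I}_Y(y) \;=\; \mathcal{I}_Y(B)
\]
for any $B \subset \mathbb{Y}$, which plugs directly into both bounds.

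I do not expect any genuine obstacle here; the argument is essentially bookkeeping once continuity of $f$ is used. The only point that requires a moment of care is showing that $\mathcal{I}_Y$ is lower semi-continuous (equivalently, that its sublevel sets are closed), and this is where goodness of $\mathcal{I}_X$ is essential, because compactness of the sublevel sets of $\mathcal{I}_X$ is what guarantees that $f$ maps them to compact (hence closed) sublevel sets of $\mathcal{I}_Y$. Without this, one would have to argue more delicately to avoid situations where $\mathcal{I}_Y$ fails to be lower semi-continuous.
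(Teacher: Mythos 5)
Your proof is correct and is the standard argument for the contraction principle; the paper does not give its own proof but cites Dembo and Zeitouni [Theorem 4.2.1], whose proof is essentially identical to yours. The only point worth flagging is that your parenthetical in the goodness step, that the infimum defining $\mathcal{I}_Y(y)$ is attained, deserves the extra half-line: since $f^{-1}(\{y\})\cap\{\mathcal{I}_X\le\alpha+1\}$ is compact (closed meets compact) and $\mathcal{I}_X$ is lower semi-continuous, the restriction of $\mathcal{I}_X$ to this set attains its minimum, giving the minimizer you need; with that spelled out, the sublevel-set identity and hence compactness of the sublevel sets of $\mathcal{I}_Y$ follow exactly as you wrote.
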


\begin{rmk} In the upcoming LDP results we want to use the contraction principle in the following situation. Let $\mathbb{X}, \mathbb{Y}$ be Polish spaces, $f:\mathbb{X}\to\mathbb{Y}$ a continuous map and $(\mu_n)_{n\in\N}$ a sequence of random measures on $\mathbb{X}$. Let $(\mu_n)_{n\in\N}$ satisfy an LDP on $\mathcal{M}(\mathbb{X})$ with speed $s:\N\to (0,\infty)$ and rate function $\mathcal{I}_\mu:\mathcal{M}(\mathbb{X})\to [0,\infty]$. We then consider the sequence of random measures $(\nu_n)_{n\in\N}$ on $\mathbb{Y}$ with $\nu_n = \mu_n \circ f^{-1}$ and want to use the contraction principle to infer an LDP for $(\nu_n)_{n\in\N}$. In this case the function that is actually \enquote{transporting} the LDP is not $f:\mathbb{X}\to \mathbb{Y}$ but $F:\mathcal{M}(\mathbb{X}) \to \mathcal{M}(\mathbb{Y})$ with $F(\mu)=\mu \circ f^{-1}$. So in general the continuity of $F$ has to be given rather than that of $f$. But the latter follows directly from the continuity of $f$ by the definition of weak convergence.
\end{rmk}

For sequences of non-identically distributed random variables, that do however exhibit a certain level of distributional convergence, the theorem of Gärtner-Ellis (see e.g. \cite[Proposition 2.9]{APTldp}, \cite[Theorem 2.3.6]{DZ}, or \cite[Theorem V.6]{dH}) provides a useful way to gain an LDP. 
\begin{proposition}\label{prop:GaertnerEllis}
Let $(X^{(n)})_{n \in \N}$ be a sequence of random variables with cumulant generating functions $\Lambda_n$ and $k \in [1, \infty)$. We assume that for all $t\in \R$ the limit $\Lambda(t):= \lim\limits_{n\to \infty} \frac{1}{n^k} \Lambda_n(n^k t)$ exists in $[-\infty, +\infty]$ and that the origin is an interior point of the effective domain $\mathcal{D}_\Lambda$. We furthermore assume that $\Lambda$ is semi-continuous and differentiable on the interior of $\mathcal{D}_\Lambda$. Then the sequence $(X^{(n)})_{n \in \N}$ satisfies an LDP with speed $n^k$ and rate function $\Lambda^*$.
\end{proposition}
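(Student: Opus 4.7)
The plan is to establish the LDP by proving its upper and lower bounds separately, following the standard two-part structure of the Gärtner-Ellis theorem, and to reduce to compact sets via exponential tightness coming from the assumption that $0 \in \mathrm{int}(\mathcal D_\Lambda)$.

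For the upper bound, I would begin with the Chebyshev-style estimate: for any $t \in \R$, any Borel set $B\subset \R$, and any $a \in B$,
$$
\Pro(X^{(n)}/n^k \in B) \le e^{-n^k ta}\,\E\big[e^{tX^{(n)}}\big]\cdot(\text{split into }t>0\text{ and }t<0\text{ halves}),
$$
so after dividing by $n^k$, taking logarithms, and using the convergence $n^{-k}\Lambda_n(n^k t) \to \Lambda(t)$, one obtains for each $t$ an estimate of the form $\limsup n^{-k}\log \Pro(\cdot) \le -(ta-\Lambda(t))$. Optimizing in $t$ gives the bound $-\Lambda^*(a)$. To upgrade this pointwise bound to a closed set $C$, I would first cover a compact set $K$ by finitely many intervals, choose a tilting parameter $t_x$ per interval with $t_x a-\Lambda(t_x)$ approximating $\Lambda^*(x)$ up to $\eps$, and then obtain the compact-set upper bound. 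Exponential tightness, needed to extend from compact to closed sets, follows from $\Lambda(\pm\delta)<\infty$ for some small $\delta>0$: a one-line Chebyshev argument then yields $\limsup n^{-k}\log\Pro(|X^{(n)}/n^k|\ge M) \le -\delta M + \max\{\Lambda(\delta),\Lambda(-\delta)\}$, which tends to $-\infty$ as $M\to\infty$.

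For the lower bound, it suffices to show that for every open set $O$ and every $x$ in a sufficiently rich subset of $O$,
$$
\liminf_{n\to\infty}\frac{1}{n^k}\log \Pro(X^{(n)}/n^k \in O) \ge -\Lambda^*(x),
$$
and then use lower-semicontinuity of $\Lambda^*$ and density considerations to conclude. Given $x$, I would choose $t \in \mathrm{int}(\mathcal D_\Lambda)$ with $\Lambda'(t)=x$ and introduce the exponentially tilted laws $\tilde \Pro_n$ defined through the Radon–Nikodym derivative $\frac{\dif \tilde\Pro_n}{\dif \Pro}=e^{tX^{(n)}-\Lambda_n(n^kt)/\cdot}$ (rescaled so that $\tilde\Pro_n$ is a probability measure). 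Undoing the tilt yields the identity
$$
\Pro\big(|X^{(n)}/n^k - x| < \delta\big) = \int_{\{|y-x|<\delta\}} e^{-n^k(ty) + \Lambda_n(n^k t)}\,\tilde{\Pro}_n(X^{(n)}/n^k \in \dif y),
$$
and so a lower bound of $e^{-n^k(tx+t\delta) + \Lambda_n(n^k t)}\,\tilde \Pro_n(|X^{(n)}/n^k-x|<\delta)$. The core task is then to prove that $\tilde \Pro_n(|X^{(n)}/n^k-x|<\delta)\to 1$: by the Gärtner-Ellis framework this is done by identifying the limit cumulant generating function of $X^{(n)}/n^k$ under $\tilde \Pro_n$ as $s\mapsto \Lambda(t+s)-\Lambda(t)$, whose Legendre transform vanishes at $x=\Lambda'(t)$, and applying the already-proven upper bound to the tilted laws. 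Passing $\delta\to 0$ gives $\liminf \ge -\Lambda^*(x)$.

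I expect the main obstacle to be the lower bound, specifically two interrelated points: (i) showing that the tilted sequence $\tilde \Pro_n$ itself satisfies a (one-sided) LDP, since we do not have direct control on higher derivatives of $\Lambda_n$, only on the limit $\Lambda$; and (ii) extending the matching lower bound from points $x$ of the form $\Lambda'(t)$ to all points in an open set, which requires exploiting convexity of $\Lambda$ together with the differentiability hypothesis to argue that such points form a dense exposed subset of $\mathrm{dom}(\Lambda^*)$. Everything else — the Chebyshev upper bound, the covering step, exponential tightness from $0\in\mathrm{int}(\mathcal D_\Lambda)$, and the limit identification of the tilted cumulant — is routine once these two conceptual steps are in place.
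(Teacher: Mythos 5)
The paper does not prove this proposition at all: Proposition~\ref{prop:GaertnerEllis} is the classical G\"artner--Ellis theorem, stated and cited directly from the literature (\cite[Proposition 2.9]{APTldp}, \cite[Theorem 2.3.6]{DZ}, \cite[Theorem V.6]{dH}), so there is no in-paper proof to compare against. Your sketch does follow the standard textbook route (Chebyshev/Markov upper bound plus covering and exponential tightness, exponential tilting for the lower bound, upper bound applied to the tilted laws, then density of exposed points), and you correctly flag the two genuinely delicate steps. That said, there is a systematic scaling error throughout: the hypothesis is $\Lambda(t)=\lim_n n^{-k}\Lambda_n(n^k t)$ and the conclusion is an LDP for $X^{(n)}$ itself at speed $n^k$, not for $X^{(n)}/n^k$. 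Your Chebyshev step should read $\Pro(X^{(n)}\ge a)\le e^{-n^k t a}\,\E\big[e^{n^k t X^{(n)}}\big]=e^{-n^k ta+\Lambda_n(n^k t)}$ for $t>0$; writing $\E[e^{tX^{(n)}}]$ gives a quantity whose $n^{-k}\log$ does not converge to anything controlled by the hypothesis. The same normalization slip recurs in the tilting argument, where the Radon--Nikod\'ym density should be $e^{n^k t X^{(n)}-\Lambda_n(n^k t)}$. Finally, the classical theorem requires $\Lambda$ to be \emph{essentially smooth}, which in addition to differentiability on $\mathcal D_\Lambda^\circ$ includes steepness (i.e.\ $|\Lambda'(\lambda_n)|\to\infty$ as $\lambda_n\to\partial\mathcal D_\Lambda$); that is precisely what makes the set of exposed points with exposing hyperplanes in $\mathcal D_\Lambda^\circ$ rich enough for the lower bound. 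The paper's statement omits steepness and you inherit that omission, so the step you describe in point (ii) is not merely ``exploiting convexity and differentiability'' but actually needs steepness (or, as in the paper's applications, the stronger fact that $\mathcal D_\Lambda=\R$, which makes steepness vacuous).
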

\subsection{Asymptotic Approximations for Laplace-type Integrals}\label{subsec:Laplace}
Finally, we will need some tools to analyze asymptotic behavior of Laplace-type integrals to prove our large deviation results. One of them will be provided by the Laplace principle, as presented in \cite[Proposition 2.10]{APTldp}, and several useful adaptations fitting for our purposes. We begin with the former.
\begin{proposition}\label{prop:LaplacePrinc}
Let $- \infty < a < b < +\infty$ and $p:[a,b] \to \R$ be a twice continuously differentiable function with a unique point $x_0 \in (a,b)$ such that $p(x_0) = \max\limits_{x\in[a,b]} p(x)$ and $p^{\prime \prime}(x_0) <0$. Further, let $q: [a,b] \to \R$ be a positive measurable function. Then
$$\lim \limits_{n \to \infty} \frac{\int_a^b q(x) \, e^{np(x)} \, \dint x}{\sqrt{\frac{2 \pi}{n |p^{\prime \prime}(x_0)|}} \, q(x_0) \, e^{np(x_0)}} = 1.$$
\end{proposition}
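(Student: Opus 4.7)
The plan is to execute the classical Laplace method: localize the integral to a shrinking neighborhood of $x_0$ where the Taylor expansion of $p$ produces a Gaussian integral, and discard the complement as exponentially negligible.

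First, I would control the tail. For any $\delta > 0$ small enough that $[x_0-\delta, x_0+\delta] \subset (a,b)$, uniqueness of the maximizer together with continuity of $p$ on the compact set $[a,b] \setminus (x_0-\delta, x_0+\delta)$ yields some $\eta_\delta > 0$ with $p(x) \le p(x_0) - \eta_\delta$ outside the neighborhood. Thus
$$\int_{[a,b] \setminus (x_0-\delta, x_0+\delta)} q(x) \, e^{np(x)} \dint x \;\le\; e^{np(x_0) - n\eta_\delta} \int_a^b q(x) \dint x,$$
which is dwarfed by the target normalization $\sqrt{2\pi/(n|p''(x_0)|)}\, q(x_0)\, e^{np(x_0)}$ as $n \to \infty$.

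Second, I would Taylor-expand $p$ around $x_0$: writing $c := |p''(x_0)| > 0$, one has $p(x) = p(x_0) - \tfrac{c}{2}(x-x_0)^2 + r(x)$ with $r(x) = o((x-x_0)^2)$. After possibly shrinking $\delta$ so that $|r(x)| \le \tfrac{c}{4}(x-x_0)^2$ on $[x_0-\delta, x_0+\delta]$, the substitution $u = \sqrt{n}\,(x-x_0)$ reduces the central contribution to
$$\frac{e^{np(x_0)}}{\sqrt{n}} \int_{-\sqrt{n}\,\delta}^{\sqrt{n}\,\delta} q\bigl(x_0 + u/\sqrt{n}\bigr) \exp\bigl(-\tfrac{c}{2} u^2 + n\, r(x_0 + u/\sqrt{n})\bigr) \dint u.$$
On the integration range one has $|n\,r(x_0 + u/\sqrt n)| \le \tfrac{c}{4} u^2$, so the integrand is dominated by $\|q\|_\infty\, e^{-cu^2/4}$. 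Dominated convergence together with continuity of $q$ at $x_0$ then gives the limit $q(x_0) \int_\R e^{-cu^2/2}\, \dint u = q(x_0)\sqrt{2\pi/c}$. Combining this with the tail estimate yields the claimed asymptotic equivalence.

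The main subtlety I expect lies in the regularity of $q$: mere measurability is not enough to guarantee $q(x_0 + u/\sqrt n) \to q(x_0)$ pointwise, so some additional hypothesis (continuity of $q$ at $x_0$, or $x_0$ being a Lebesgue point of $q$, or $q$ continuous on $[a,b]$) is implicitly required and is present in the reference \cite{APTldp}. Fortunately, in every application of this proposition later in the paper the weight $q$ is a smooth positive function on $[a,b]$, so this implicit regularity is harmless.
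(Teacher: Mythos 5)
The paper does not prove Proposition~\ref{prop:LaplacePrinc} at all; it is imported verbatim from \cite[Proposition~2.10]{APTldp}, so there is no in-paper argument to compare against. Your proof is the standard Laplace-method derivation --- cut off the tail using uniqueness of the maximizer, Taylor-expand $p$ to second order at $x_0$, rescale by $u=\sqrt{n}(x-x_0)$, and pass to the limit by dominated convergence against a Gaussian envelope --- and it is correct. You are also right to flag the hypothesis on $q$: as written, ``positive measurable'' is not enough. Your tail estimate needs $\int_a^b q<\infty$, your domination needs $q$ bounded near $x_0$, and the dominated-convergence step needs $q(x_0+u/\sqrt{n})\to q(x_0)$; without such regularity the conclusion genuinely fails (take $q\equiv 1$ except $q(x_0)=2$, a positive measurable function, and the displayed ratio tends to $1/2$, not $1$). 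The cited source assumes $q$ continuous, and every invocation of the proposition in this paper is with a smooth positive $q$, so the argument you give is exactly the intended one and the discrepancy is a transcription lapse in the stated hypothesis rather than a flaw in your proof.
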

\begin{rmk}\label{rmk:AdaptationLaplace}
	Proposition \ref{prop:LaplacePrinc} effectively means that 
	$$\lim\limits_{n\to\infty} \frac{1}{n} \log \int_a^b q(x) \, e^{np(x)} \, \dint x = p(x_0).$$
	However, we want to fit this result somewhat further to our needs. Assuming the set-up of Proposition \ref{prop:LaplacePrinc}, lets say $s^{(1)} := (s_n^{(1)})_{n \in \N}$ and $s^{(2)} := (s_n^{(2)})_{n \in \N}$ are sequences, where $s^{(1)}$ is non-negative  and bounded, and $s^{(2)}$ is positive (or at least positive almost everywhere), such that
	\begin{eqnarray} \label{eq:LaplaceCondition}
	\lim\limits_{n\to\infty} \frac{1}{n} \, \big| \! \log s_n^{(2)} \big| &<& +\infty.
	\end{eqnarray}
	Expanding the fraction in the Laplace principle in Proposition \ref{prop:LaplacePrinc} by $s_n^{(2)}$ and adding 
	$$\lim_{n\to \infty} \frac{s_n^{(1)}}{ s_n^{(2)} \, \sqrt{\frac{2 \pi}{n |p^{\prime \prime}(x_0)|}} \, q(x_0) \, e^{np(x_0)}},$$
	which is zero, since $s_n^{(1)}$ is bounded, yields that
	%
	%
	$$\lim \limits_{n \to \infty} \frac{s_n^{(1)} +s_n^{(2)} \int_a^b q(x) \, e^{np(x)} \, \dint x}{s_n^{(2)} \, \sqrt{\frac{2 \pi}{n |p^{\prime \prime}(x_0)|}} \, q(x_0) \, e^{n p(x_0)}} = 1.$$
	Thus, we have that 
	\begin{equation} \label{eq:ExpLaplace2}
	\lim\limits_{n\to\infty} \frac{1}{n} \log \left[ s_n^{(1)} + s_n^{(2)}\int_a^b q(x) \, e^{n p(x)} \, \dint x \right]= \lim\limits_{n\to\infty} \frac{1}{n} \log s_n^{(2)} + p(x_0).
	\end{equation}
\end{rmk}
The last tool for analysing asymptotic integral behavior will be the following result by Breitung and Hohenbichler \cite{Breitung1989}, that provides us with asymptotic approximations of Laplace-type integrals even if the involved functions maximize on the boundary of the integration domain, specifically at the origin. This is the result given in \cite[Lemma 4]{Breitung1989} for $n=1, k=1$, applied to functions $q$ and $p$ instead of $h$ and $f$. The parameter $\lambda$ from \cite{Breitung1989} in our setting is replaced by the integer $n \in \N$. Since $n=k=1$, the last condition in \cite[Lemma 4]{Breitung1989} regarding the Hessian of $p$ at $0$, that is, $p^{\prime \prime}(0)$, falls away.
\begin{proposition} \label{prop:Breitung}
		Let $F \subset \R$ be a compact set containing the origin in its interior. If 
		\begin{enumerate}[label=(\alph*)]
		\item $p:F\to \R$ and $q:F \to \R$ are continuous functions with $q(0) \ne 0$, 
		\item $p(x) < p(0)$ for all $x \in F \cap \R_+ \setminus\{0\}$,
		\item there is a neighbourhood $V \subset F$ of $0$ in which $p$ is twice continuously differentiable,
		\item $p^\prime(0) < 0$,
		\end{enumerate}
		then it holds that 
		\begin{eqnarray*}
	\displaystyle \lim \limits_{n \to \infty} \frac{\int_{F \cap \R_+} q(x) \, e^{np(x)} \, \dint x}{n^{-1} \, {|p^\prime(0)|}^{-1} q(0) \, e^{n p(0)} } = 1.
	\end{eqnarray*}
\end{proposition}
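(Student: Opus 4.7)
The plan is to execute a Laplace-type asymptotic analysis adapted to the situation where the maximum of $p$ on the integration domain is attained at the boundary point $0$ with a nonzero first derivative $p'(0) < 0$. The linear leading behaviour forces the asymptotic to be of order $n^{-1}$, rather than the $n^{-1/2}$ that a quadratic interior maximum (as in Proposition \ref{prop:LaplacePrinc}) would produce.

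First, I would localize the integral to a small neighbourhood of the origin. Since $F \cap \R_+$ is compact, $q$ is bounded on it, and $p$ is continuous with $p(x) < p(0)$ on $(F \cap \R_+) \setminus \{0\}$, for any $\delta > 0$ with $[0,\delta] \subset V$ there exists $\eta > 0$ such that $p(x) \le p(0) - \eta$ on $F \cap \R_+ \cap [\delta, \infty)$. Hence the contribution of this region to the integral is $O(e^{n(p(0)-\eta)})$, which, after dividing by $n^{-1} |p'(0)|^{-1} q(0) e^{np(0)}$, tends to $0$ and may be neglected.

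Next I would handle the near-origin piece $\int_0^\delta q(x) e^{n p(x)} \dint x$. Setting $a := |p'(0)| > 0$ and applying Taylor's theorem on $V$, write $p(x) = p(0) - a x + R(x)$ with $|R(x)| \le C x^2$ on $[0,\delta]$. The substitution $u = nax$ turns the integral into
$$\frac{e^{n p(0)}}{n a} \int_0^{n a \delta} q\!\left(\frac{u}{n a}\right) \exp\!\left(-u + n R\!\left(\frac{u}{n a}\right)\right) \dint u.$$
Choosing $\delta$ once and for all so small that $C\delta \le a/2$, one obtains $n R(u/(na)) \le Cu^2/(na^2) \le (C\delta/a)\, u \le u/2$ on the entire range of integration, so the integrand is dominated by $e^{-u/2} \sup_F |q|$ uniformly in $n$. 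Since it converges pointwise to $q(0) e^{-u}$ as $n\to\infty$ (using continuity of $q$ at $0$ and $u/(na) \to 0$), dominated convergence gives the limit $\int_0^\infty q(0) e^{-u} \dint u = q(0)$, and combining with the tail bound finishes the proof.

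The main technical obstacle is exactly producing a uniform integrable majorant for $\exp(-u + nR(u/(na)))$ in the rescaled variable $u$: although $nR(x) = O(nx^2)$ is pointwise negligible after substitution, one must swallow it into the linear term globally on a range that itself grows with $n$. The trick, already indicated above, is that choosing $\delta$ small enough (independent of $n$) so that the quadratic remainder is dominated by half of the linear term on $[0,\delta]$ converts $-u + nR(u/(na))$ into something bounded above by $-u/2$ uniformly in $n$ and $u \in [0, n a \delta]$. Apart from this, the argument is a routine application of Taylor's theorem, substitution, and dominated convergence.
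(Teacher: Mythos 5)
Your proof is correct. One point of context: the paper does not actually prove this proposition --- it cites the statement directly as a special case (their $n=k=1$) of Lemma 4 in Breitung and Hohenbichler, so there is no in-paper argument to compare against. Your proposal supplies a complete, self-contained proof via a standard boundary Laplace analysis: localize to $[0,\delta]$ (the tail being exponentially subdominant by compactness, continuity of $p$, and condition (b)); Taylor-expand $p(x)=p(0)-ax+R(x)$ with $a=|p'(0)|$ and $|R(x)|\le Cx^2$ on $[0,\delta]\subset V$ using (c); substitute $u=nax$; and pass to the limit by dominated convergence. The one place that genuinely needs care --- building a majorant valid on the growing range $[0,na\delta]$ --- you handle correctly: from $u/(na)\le\delta$ one gets $nR(u/(na))\le Cu^2/(na^2)\le (C\delta/a)\,u\le u/2$ once $\delta\le a/(2C)$, so the integrand is dominated by $\bigl(\sup_F|q|\bigr)\,e^{-u/2}$ uniformly in $n$, and the pointwise limit is $q(0)e^{-u}$. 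Dividing by the normalizer $n^{-1}a^{-1}q(0)e^{np(0)}$ (note $q(0)\ne 0$, and the argument is insensitive to its sign) yields the claimed limit $1$. This is a clean and arguably preferable presentation to the bare citation, since it makes the paper self-contained and isolates exactly which hypotheses (b)--(d) are used where.
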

We will only need the results from Proposition \ref{prop:Breitung} to handle the asymptotics of one specific Laplace-type integral over the set $[0,1]$, where the function in the exponent maximizes on the boundary at $0$. Hence we will derive another asymptotic integral expansion result tailored specifically to our purposes.
\begin{rmk} \label{rmk:ApplBreitung}
For functions $q$ and $p$ as described in Proposition \ref{prop:Breitung} and the set $F = [-1,1]$ it holds that
$$\lim\limits_{n\to\infty} \frac{1}{n} \log \int_{0}^{1} q(x) \, e^{np(x)} \, \dint x  = \lim\limits_{n\to\infty} \frac{1}{n} \log \int_{[-1,1] \cap \R_+} q(x) \, e^{np(x)} \, \dint x  = p(0).$$
By the same arguments as in Remark \ref{rmk:AdaptationLaplace} it also holds that 
\begin{equation} \label{eq:ExpLaplaceBreitung}
\lim\limits_{n\to\infty} \frac{1}{n} \log \left[ s_n^{(1)} + s_n^{(2)}\int_0^1 q(x) \, e^{np(x)} \, \dint x \right]= \lim\limits_{n\to\infty} \frac{1}{n} \log s_n^{(2)} + p(0).
\end{equation}
for sequences $s^{(1)} := (s_n^{(1)})_{n \in \N}$ and $s^{(2)} := (s_n^{(2)})_{n \in \N}$ as described there.
\end{rmk}

\section{Weighted $p$-radial distributions on $\ell_p^n$-balls}\label{sec:Weighted}

In this section, we describe a class of probability distributions on the classical $\ell_p^n$-ball $\B_p^n$, $n\in\N$, and its non-negative counterpart $\B_{p,+}^n$ in $\R^n_+$, generalizing the approach in \cite{BartheGuedonEtAl}, by allowing for an additional homogeneous weight function. To introduce our framework, we let $f:\R^n\to [0,\infty)$ be a measurable function, which we assume to be (positively) homogeneous of degree $m$ for some $m\geq 0$. By this we mean that $f(tx)=t^mf(x)$ for all $t\geq 0$. We also assume that $f$ is integrable with respect to the cone probability measure $\bC_{n,p}$ on the $\ell_p^n$-sphere $\SSS_p^{n-1}$. In this paper, we write $\mathscr F_m^+(\R^n)$ for the class of such functions (omitting its dependence on $p$ in our notation). For $p \in (0,\infty)$ and $f\in\mathscr{F}_m^+(\R^n)$ we let $C_{n,p,f}\in(0,\infty)$ be the normalization constant such that
\begin{equation} \label{eq:NormConst}
C_{n,p,f}\int_{\R^n}f(x)\,e^{-\|x\|_p^p} \,\dint x = 1,
\end{equation}
and denote by $\bU_{n,p,f}$ the probability measure on $\B_p^n$ with density 
$$
x\mapsto C_{n,p,f} \, \vol_n(\B_p^n)  \, \Gamma\Big({n+m\over p}+1\Big) \, f(x),\qquad x\in\B_p^n,
$$
with respect to $\bU_{n,p}$. Similarly, we let $\bC_{n,p,f}$ be the probability measure on $\SSS_p^{n-1}$ with density
$$
y\mapsto C_{n,p,f} \, n \, \vol_n(\B_p^n)  \, p^{-1} \, \Gamma\Big({n+m\over p}\Big) \, f(y),\qquad y\in\SSS_p^{n-1},
$$
with respect to $\bC_{n,p}$. The density property of the above functions follows from applying the polar integration formula for $\B^n_p$ in \eqref{eq:PolarIngeration} and straight-forward calculations. It also follows from the calculations in the proof of Lemma \ref{lem:DistributionConeMeasure}. As mentioned in Section \ref{subsec:GeometryMatrix}, singular values are non-negative and therefore, as we will see in Section \ref{subsec:SingularDistr}, the vector of singular values is distributed on $\B^n_{p,+}$ and $\SSS^{n-1}_{p,+}$. For $p \in (0,\infty)$ and $f\in\mathscr{F}_m^+(\R^n_+)$ we define a constant $C_{n,p,f,+}$ and distributions $\bU_{n,p,f,+}$ and $ \bC_{n,p,f,+}$ analogue to the above with respect to $\B^n_{p,+}$ and $\SSS^{n-1}_{p,+}$. We want to formulate all results in this section for both the classical $\ell_p^n$-balls and -spheres and their non-negative counterparts. However, as the proofs work in an entirely analogue fashion, for the sake of brevity we will use the index $\boxplus$ with all relevant quantities, indicating that any given result can be formulated with and without a $+$ in the index of these quantities, i.e., for both $\B^n_{p}$, $\SSS^{n-1}_{p}$ and $\B^n_{p,+}$, $\SSS^{n-1}_{p,+}$. The relevant proof will then always be carried out for $\B^n_{p}$ and $\SSS^{n-1}_{p}$, and only the changes necessary in the non-negative case pointed out, if any need to be made.\\
\\
In the next lemma we derive probabilistic representations of the distributions $\bU_{n,p,f, {\ssbp}}$ and $\bC_{n,p,f, \ssbp}$. This was proven in \cite[Lemma 4.2]{KPTEnsembles} for the classical case in $\R^n$ and the proof here works completely analogue.

\begin{lemma}\label{lem:DistributionConeMeasure}
Let $0 <  p <\infty$ and $f\in\mathscr F_m^+(\R^n_{\ssbp})$ for some $m \geq 0$. Let $X=(X_1,\ldots,X_n)$ be a random vector with joint density $C_{n,p,f,\ssbp} \, e^{-\|x\|_p^p} \, f(x)$, $x\in\R^n_{\ssbp}$. 
\begin{itemize}
\item[(i)] Then the random vector $X\over\|X\|_p$ has distribution $\bC_{n,p,f,\ssbp}$ and $X\over\|X\|_p$ and $\|X\|_p$ are independent. 
\item[(ii)] Independently of $X$, let $U$ be uniformly distributed on $[0,1]$. Then $U^{1\over n+m}{X\over\|X\|_p}$ has distribution $\bU_{n,p,f, \ssbp}$.
\end{itemize}
\end{lemma}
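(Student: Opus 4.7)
The strategy is the standard polar decomposition argument: apply the polar integration formula \eqref{eq:PolarIngeration} (or its non-negative counterpart, Corollary \ref{cor:PolarIngerationPositive}) to push the expectation of a generic test function through the change of variables $x \mapsto (r,y) = (\|x\|_p, x/\|x\|_p)$, and read off the product structure of the resulting integral. All arguments are carried out for $\B_p^n$; the changes for $\B_{p,+}^n$ are limited to replacing $\bC_{n,p}$ by $\bC_{n,p,+}$ and invoking Corollary \ref{cor:PolarIngerationPositive}.

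\textbf{Part (i).} For a bounded measurable test function of the form $g(y,r) = g_1(y) g_2(r)$, I would write
$$
\E\bigl[g_1\bigl(X/\|X\|_p\bigr)\,g_2(\|X\|_p)\bigr] = C_{n,p,f}\int_{\R^n} g_1\!\left(\tfrac{x}{\|x\|_p}\right) g_2(\|x\|_p)\,e^{-\|x\|_p^p}\,f(x)\,\dint x,
$$
and then apply \eqref{eq:PolarIngeration}. Since $\|y\|_p = 1$ on $\SSS_p^{n-1}$, the factor $g_1(ry/\|ry\|_p) = g_1(y)$, $g_2(\|ry\|_p) = g_2(r)$, and the homogeneity $f(ry) = r^m f(y)$ pulls the radial weight out, giving
$$
C_{n,p,f}\, n\,\vol_n(\B_p^n)\left[\int_{\SSS_p^{n-1}} g_1(y)\,f(y)\,\bC_{n,p}(\dint y)\right]\!\left[\int_0^\infty r^{n+m-1}e^{-r^p}g_2(r)\,\dint r\right].
$$
This product form immediately yields independence of $X/\|X\|_p$ and $\|X\|_p$. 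Specializing $g_2 \equiv 1$ and using $\int_0^\infty r^{n+m-1}e^{-r^p}\dint r = p^{-1}\Gamma((n+m)/p)$, the marginal of $X/\|X\|_p$ integrates $g_1$ against exactly the density $C_{n,p,f}\,n\,\vol_n(\B_p^n)\,p^{-1}\,\Gamma((n+m)/p)\,f(y)$ defining $\bC_{n,p,f}$.

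\textbf{Part (ii).} Using independence of $U$ from $X/\|X\|_p$ and Part (i), for a test function $g$,
$$
\E\!\left[g\!\left(U^{1/(n+m)}\tfrac{X}{\|X\|_p}\right)\right] = \int_0^1\!\int_{\SSS_p^{n-1}} g(u^{1/(n+m)}y)\,\bC_{n,p,f}(\dint y)\,\dint u.
$$
The substitution $r = u^{1/(n+m)}$ turns this into
$$
(n+m)\int_0^1 r^{n+m-1}\!\int_{\SSS_p^{n-1}} g(ry)\,\bC_{n,p,f}(\dint y)\,\dint r.
$$
On the other side, unfolding the definition of $\bU_{n,p,f}$ and applying \eqref{eq:PolarIngeration} to $\int_{\B_p^n} g\,\dint\bU_{n,p,f}$ (with the same homogeneity trick for $f$) yields
$$
C_{n,p,f}\,\Gamma\!\bigl(\tfrac{n+m}{p}+1\bigr)\,n\,\vol_n(\B_p^n)\int_0^1 r^{n+m-1}\!\int_{\SSS_p^{n-1}} g(ry)\,f(y)\,\bC_{n,p}(\dint y)\,\dint r.
$$
Matching the two expressions reduces to the constant identity
$$
(n+m)\cdot C_{n,p,f}\,n\,\vol_n(\B_p^n)\,p^{-1}\,\Gamma\!\bigl(\tfrac{n+m}{p}\bigr) = C_{n,p,f}\,\Gamma\!\bigl(\tfrac{n+m}{p}+1\bigr)\,n\,\vol_n(\B_p^n),
$$
which is the functional equation $\Gamma(z+1)=z\,\Gamma(z)$ applied with $z = (n+m)/p$.

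\textbf{Main obstacle.} There is no analytic difficulty; the only place one can slip is the bookkeeping of normalization constants, in particular ensuring that the factors $\Gamma((n+m)/p+1)$ and $p^{-1}\Gamma((n+m)/p)$ arising in the definitions of $\bU_{n,p,f}$ and $\bC_{n,p,f}$ line up after the change of variables $u \mapsto r = u^{1/(n+m)}$. Care is also needed so that, for $\R^n_+$, the exponent $1/(n+m)$ and the homogeneity hypothesis on $f$ are used on the correct polar identity from Corollary \ref{cor:PolarIngerationPositive}, but since both $\B_{p,+}^n$ and $\SSS_{p,+}^{n-1}$ are star-shaped with the origin in the relative interior of $\R^n_+$, the argument proceeds verbatim.
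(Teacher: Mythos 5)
Your proof is correct and follows the same strategy as the paper's: polar integration plus the homogeneity of $f$ to pull out the radial factor, then bookkeeping of constants. In fact, in part (i) you are slightly more careful than the paper's own write-up, which only applies a test function to $X/\|X\|_p$ and therefore only verifies the marginal distribution; your product test function $g_1(y)g_2(r)$ simultaneously establishes the marginal and the independence claim. In part (ii) you unfold both $\E[g(U^{1/(n+m)}X/\|X\|_p)]$ and $\int_{\B_p^n}g\,\dint\bU_{n,p,f}$ down to a common integral and compare constants, whereas the paper transforms the left side step by step into the right side; the arithmetic (including the $\Gamma(z+1)=z\Gamma(z)$ identity with $z=(n+m)/p$) is identical, so this is only an organizational difference.
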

\begin{proof}
Consider a non-negative measurable function $h: \SSS_p^{n-1} \to \R$. We use the polar integration formula \eqref{eq:PolarIngeration} as well as the homogeneity of $f$ to deduce that
\begin{align*}
\E h\bigg(\frac{X}{\|X\|_p}\bigg) &= C_{n,p,f}\int_{\R^n}  f(x) \, e^{-\|x\|_p^p}\, h\bigg(\frac{x}{\|x\|_p}\bigg) \dint x \cr 
& =C_{n,p,f}\,n\,\vol_n(\B_p^n)
\int_0^\infty r^{n+m-1} \, e^{-r^p} \dint r \int_{\SSS_p^{n-1}} f(y)\, h(y) \, \bC_{n,p}(\dint y) \cr
& = C_{n,p,f} \,n\, \vol_n(\B_p^n) \, p^{-1} \, \Gamma\Big({n+m\over p}\Big) \int_{\SSS_p^{n-1}} f(y)\, h(y) \, \bC_{n,p}(\dint y)\\
&= \int_{\SSS_p^{n-1}}h(y) \, \bC_{n,p,f}(\dint y).
\end{align*}
This proves the claim in (i). To show (ii), let $h:\B_p^n\to\R$ be a non-negative measurable function. We notice that if $U$ is uniformly distributed on $[0,1]$, the random variable $U^{1\over n+m}$ has density $r\mapsto (n+m) \, r^{n+m-1}$, $r\in[0,1]$, with respect to the Lebesgue measure on $[0,1]$. Using the result from part (i), the homogeneity of $f$, and the polar integration formula \eqref{eq:PolarIngeration}, we find that%
\begin{align*}
&\bE h\Big(U^{1\over n+m}{X\over\|X\|_p}\Big)\\
&\qquad= C_{n,p,f} \, n \, \vol_n(\B_p^n)  \, p^{-1} \, \Gamma\Big({n+m\over p}\Big) \int_0^1 (n+m)\,r^{n+m-1}\int_{\SSS_p^{n-1}} f(y)\,h(ry) \,\bC_{n,p}(\dint y)\,\dint r\\
&\qquad=C_{n,p,f}  \,\Gamma\Big({n+m\over p}\Big)\, {n+m\over p}\, n \, \vol_n(\B_p^n) \int_0^1 r^{n-1}\int_{\SSS_p^{n-1}} f(ry)\,h(ry) \,\bC_{n,p}(\dint y)\,\dint r\\
&\qquad=C_{n,p,f} \, \Gamma\Big({n+m\over p }+1\Big) \, \int_{\B_p^n} f(x)\,h(x)\,\dint x\\
&\qquad=C_{n,p,f} \, \vol_n(\B_p^n) \, \Gamma\Big({n+m\over p }+1\Big) \, \int_{\B_p^n} f(x)\,h(x)\,\bU_{n,p}(\dint x).\\
&\qquad=\int_{\B_p^n}h(x) \,\bU_{n,p,f}(\dint x).
\end{align*}
The proof for $\B^n_p$ and $\SSS_p^{n-1}$ is thus complete. In the non-negative setting one proceeds in the same way, but applies the non-negative polar integration formula from Corollary \ref{cor:PolarIngerationPositive} for $K=\B_{p,+}^n$.
\end{proof}

\begin{rmk}\label{rmk:WeightedGenGaussians}
Note, that the distribution of a random vector $X=(X_1,\ldots,X_n)$ with joint density $C_{n,p,f} \, e^{-\|x\|_p^p} \, f(x)$, $x\in\R^n$, is just the $n$-fold product distribution ${\bN}_p^{\otimes n}$ of the generalized Gaussian distribution ${\bN}_p$, weighted by the function $f$ (and appropriately renormalized). So the distribution ${\bN}_p$ is in fact the core building block of the probabilistic representations, but is somewhat implicit in the density of the random vector. In the non-negative case $\bN_p$ is replaced by the truncated and renormalized version of $\bN_p$ in this role.%
\end{rmk}%
The next result is the main result of this section. It is a general version of \cite[Theorem 3]{BartheGuedonEtAl}, where it is assumed that the weight function $f$ is identically equal to $1$. The proof will work along the lines of that in \cite{BartheGuedonEtAl} and relies on a multiple application of the polar integration formula.
\begin{thm}\label{thm:ProbablisticRepresentation}
Let $\bW$ be a Borel probability measure on $[0,\infty)$. Let $0<p<\infty$ and $f\in\mathscr{F}_m^+(\R^n_{\ssbp})$ for some $m\geq 0$. Let $X=(X_1,\ldots,X_n)$ be a random vector with density $C_{n,p,f, \ssbp}\,e^{-\|x\|_p^p}\, f(x)$, $x\in\R^n_{\ssbp}$, and $W$ a non-negative random variable with distribution $\bW$, which is independent of $X$. Then the random vector
$$
X\over (\|X\|_p^p+W)^{1/p}
$$
has distribution $\bP_{n,p,\bW,f, \ssbp} := \bW(\{0\})\bC_{n,p,f, \ssbp}+\Psi_f\bU_{n,p,f, \ssbp},$
where $\Psi_f(x)=\psi_f(\|x\|_p)$, $x\in\B_{p,\ssbp}^n$, is a $p$-radial density with
$$
\displaystyle \psi_f(s) =  \displaystyle {1\over \Gamma\big({n+m\over p}+1\big)}{1\over (1-s^p)^{{n+m\over p}+1}}\bigg[\int_{(0,\infty)} w^{n+m\over p} \, e^{-{s^p\over 1-s^p}w} \, \bW(\dint w)\bigg],\qquad 0\leq s\leq 1.
$$
\end{thm}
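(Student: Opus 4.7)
My plan is to test the distribution of $Z := X/(\|X\|_p^p + W)^{1/p}$ against an arbitrary non-negative measurable function $h:\B_{p,\ssbp}^n\to\R$ and verify that $\E h(Z)$ matches the integral of $h$ against the proposed mixture. The first step is to condition on $W$ and split according to whether $W = 0$ or $W > 0$:
\[
\E h(Z) \;=\; \bW(\{0\})\,\E\,h\!\left(\tfrac{X}{\|X\|_p}\right) + \int_{(0,\infty)}\E\,h\!\left(\tfrac{X}{(\|X\|_p^p+w)^{1/p}}\right)\bW(\dint w).
\]
The first summand equals $\bW(\{0\})\int_{\SSS_{p,\ssbp}^{n-1}}h(y)\,\bC_{n,p,f,\ssbp}(\dint y)$ by an immediate application of Lemma \ref{lem:DistributionConeMeasure}(i). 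So the entire problem reduces to proving that for each fixed $w > 0$,
\[
\E\,h\!\left(\tfrac{X}{(\|X\|_p^p+w)^{1/p}}\right) \;=\; \int_{\B_{p,\ssbp}^n} h(x)\,\phi_w(\|x\|_p)\,\bU_{n,p,f,\ssbp}(\dint x),
\]
where $\phi_w$ is the inner integrand in the definition of $\psi_f$; integrating this identity against $\bW(\dint w)$ on $(0,\infty)$ then produces $\Psi_f\,\bU_{n,p,f,\ssbp}$.

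To establish this fixed-$w$ identity I will apply the polar integration formula \eqref{eq:PolarIngeration} to write $x = r u$ with $r\in(0,\infty)$ and $u\in\SSS_{p,\ssbp}^{n-1}$, use the homogeneity $f(ru)=r^m f(u)$, and then perform the radial substitution
\[
s \;=\; \frac{r}{(r^p+w)^{1/p}}, \qquad\text{so that}\qquad r = w^{1/p}\,s\,(1-s^p)^{-1/p},\quad \dint r = w^{1/p}(1-s^p)^{-1-1/p}\dint s,
\]
together with the key identities $ru/(r^p+w)^{1/p}=su$, $e^{-r^p} = e^{-ws^p/(1-s^p)}$, and $r^{n+m-1} = w^{(n+m-1)/p}s^{n+m-1}(1-s^p)^{-(n+m-1)/p}$. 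Gathering all powers of $(1-s^p)$ and $w$, the radial integral transforms into
\[
w^{(n+m)/p}\,s^{n+m-1}(1-s^p)^{-(n+m)/p-1}\,e^{-ws^p/(1-s^p)}\dint s,
\]
exactly the kernel appearing inside the definition of $\psi_f$ (before integration against $\bW$). Pulling $s^m$ back into the spherical integral via $s^m f(u) = f(su)$ (again by homogeneity) and applying polar integration in reverse on $\B_{p,\ssbp}^n$ then produces an integral of $h(x) f(x)\phi_w(\|x\|_p)$ in $x$ over $\B_{p,\ssbp}^n$, which upon identifying the density of $\bU_{n,p,f,\ssbp}$ with respect to Lebesgue measure (namely $x\mapsto C_{n,p,f,\ssbp}\,\Gamma(\tfrac{n+m}{p}+1)\,f(x)$) becomes exactly $\int h(x)\phi_w(\|x\|_p)\,\bU_{n,p,f,\ssbp}(\dint x)$.

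The main obstacle is keeping the bookkeeping straight: there are three layers of normalization constants to track ($C_{n,p,f,\ssbp}$, $n\,\vol_n(\B_{p,\ssbp}^n)$ from polar integration, and $\Gamma(\tfrac{n+m}{p}+1)$ from the $\bU_{n,p,f,\ssbp}$ density), and the homogeneity of $f$ must be invoked twice (once to pull out $r^m$ under the original polar integration, once to put $s^m$ back into the spherical measure before the reverse polar integration). If these bookkeeping steps line up, the $(1-s^p)$ powers from the Jacobian cancel precisely against those in the definition of $\psi_f$, leaving the clean closed form claimed. For the non-negative variant, the only changes are to invoke Corollary \ref{cor:PolarIngerationPositive} in place of \eqref{eq:PolarIngeration} and to integrate $s$ and $u$ over $\SSS_{p,+}^{n-1}$ and $[0,1]$ respectively; the algebra is identical.
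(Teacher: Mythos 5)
Your proposal is correct and follows essentially the same route as the paper's proof: condition on $W$, peel off the $W=0$ atom via Lemma \ref{lem:DistributionConeMeasure}(i), then for fixed $w>0$ apply the polar integration formula, use homogeneity of $f$ to extract $r^m$, and perform the radial substitution $s = r/(r^p+w)^{1/p}$ (equivalently $r^p = ws^p/(1-s^p)$). Your Jacobian $\dint r = w^{1/p}(1-s^p)^{-1-1/p}\dint s$ and the resulting kernel $w^{(n+m)/p}s^{n+m-1}(1-s^p)^{-(n+m)/p-1}e^{-ws^p/(1-s^p)}\dint s$ are both correct, and pulling $s^m$ back into $f(su)$ before inverting polar integration is precisely the manipulation the paper performs via the auxiliary identity \eqref{eq:PolarIntegrationFinal2}. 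The only cosmetic difference is that you invert polar integration directly and read off the Lebesgue density of $\bU_{n,p,f,\ssbp}$, whereas the paper compares two iterated integral representations (\eqref{eq:PolarIntegrationFinal1} and \eqref{eq:PolarIntegrationFinal2}); these are equivalent. One tiny imprecision: you call $\phi_w$ ``the inner integrand in the definition of $\psi_f$,'' but for the identity $\int_{(0,\infty)}\phi_w\,\bW(\dint w)=\psi_f$ to hold, $\phi_w$ must also absorb the prefactors $\Gamma(\tfrac{n+m}{p}+1)^{-1}(1-s^p)^{-(n+m)/p-1}$; your subsequent bookkeeping shows you do track these, so this is just a wording slip, not a gap.
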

\begin{proof}
Let $h:\R^n\to\R$ be an arbitrary non-negative measurable function. Then,
\begin{align}
\nonumber\bE h\Big({X\over (\|X\|_p^p+W)^{1/p}}\Big) &= \int_{[0,\infty)}\bE h\Big({X\over (\|X\|_p^p+w)^{1/p}}\Big) \,\bW(\dint w)\\
&= \int_{[0,\infty)} \bE h\Big(\Big({\|X\|_p^p\over \|X\|_p^p+w}\Big)^{1/p}{X\over\|X\|_p}\Big) \, \bW(\dint w).\label{eq:PolarIntegrationStep1}
\end{align}
For fixed $w>0$ we compute the expectation under the integral sign as follows by means of the polar integration formula \eqref{eq:PolarIngeration}:
\begin{align*}
&\bE h\Big(\Big({\|X\|_p^p\over \|X\|_p^p+w}\Big)^{1/p}{X\over\|X\|_p}\Big)\\
&\qquad=C_{n,p,f}\int_{\R^n}e^{-\|x\|_p^p}\,f(x)\,h\Big(\Big({\|x\|_p^p\over \|x\|_p^p+w}\Big)^{1/p}{x\over\|x\|_p}\Big)\,\dint x\\
&\qquad= n \, \vol_n(\B_p^n) \, C_{n,p,f}\int_0^\infty r^{n-1}\,e^{-r^p}\int_{\SSS_p^{n-1}} f(ry)\,h\Big(\Big({r^p\over r^p+w}\Big)^{1/p}y\Big)\,\bC_{n,p}(\dint y)\, \dint r \\
&\qquad= n \, \vol_n(\B_p^n) \, C_{n,p,f}\int_0^\infty r^{n+m-1}\,e^{-r^p}\int_{\SSS_p^{n-1}} f(y)\,h\Big(\Big({r^p\over r^p+w}\Big)^{1/p}y\Big) \, \bC_{n,p}(\dint y)\, \dint r,
\end{align*}
where we used in addition the assumption that $f$ is $m$-homogeneous. Applying the change of variables $r^p={s^p\over 1-s^p} \, w$, we get
\begin{align*}
&\bE h\Big(\Big({\|X\|_p^p\over \|X\|_p^p+w}\Big)^{1/p}{X\over\|X\|_p}\Big)\\
&\qquad= n \, \vol_n(\B_p^n) \, C_{n,p,f} \, w^{n+m\over p}\int_0^1 {s^{n+m-1}\over (1-s^p)^{{n+m\over p}+1}}\,e^{-{s^p\over 1-s^p}w}\int_{\SSS_p^{n-1}} f(y)\,h(sy) \, \bC_{n,p}(\dint y) \, \dint s.
\end{align*}
Also, we know from Lemma \ref{lem:DistributionConeMeasure} (i) that $X/\|X\|_p$ has distribution $\bC_{n,p,f}$, which in turn has density
$$
y\mapsto  C_{n,p,f} \, n \, \vol_n(\B_p^n)  \, p^{-1} \, \Gamma\Big({n+m\over p}\Big) \, f(y),\qquad y\in\SSS_p^{n-1},
$$
with respect to $\bC_{n,p}$. Thus,
$$\bE h\Big(\Big({\|X\|_p^p\over (\|X\|_p^p+w)^{1/p}}\Big)^{1/p}{X\over\|X\|_p}\Big) = p \, {\Gamma\Big({n+m\over p}\Big)}^{-1}  w^{n+m\over p}\int_0^1{s^{n+m-1}\over (1-s^p)^{{n+m\over p}+1}}\,e^{-{s^p\over 1-s^p}w}\,\bE h\Big(s{X\over\|X\|_p}\Big)\, \dint s.$$
As a consequence, recalling \eqref{eq:PolarIntegrationStep1}, we see that
\begin{align}
\nonumber &\bE h\Big({X\over (\|X\|_p^p+W)^{1/p}}\Big) - \bW(\{0\})\bE h\Big({X\over \|X\|_p}\Big)\\
\nonumber&\qquad=  p\, {\Gamma\Big({n+m\over p}\Big)}^{-1}  \,\int_{(0,\infty)} w^{n+m\over p}\int_0^1{s^{n+m-1}\over (1-s^p)^{{n+m\over p}+1}}\,e^{-{s^p\over 1-s^p}w}\,\bE h\Big(s{X\over\|X\|_p}\Big) \,\dint s\,\bW(\dint w)\\
&\nonumber\qquad=  \frac{n+m}{ {\Gamma\big({n+m\over p}+1\big)}} \, \int_0^1 {s^{n+m-1}\over (1-s^p)^{{n+m\over p}+1}}\Bigg[\int_{(0,\infty)} w^{n+m\over p}e^{-{s^p\over 1-s^p}w} \, \bW(\dint w) \Bigg]\bE h\Big(s{X\over\|X\|_p}\Big) \,\dint s\\
&\qquad= (n+m)\int_0^1\,s^{n+m-1}\,\psi_f(s)\,\bE h\Big(s{X\over\|X\|_p}\Big) \,\dint s. \label{eq:PolarIntegrationFinal1}
\end{align}
Finally, if $\bM$ is any probability measure on $\B_p^n$ with $p$-radial density $\Phi(x)=\varphi(\|x\|_p)$, $x\in\B_p^n$, with respect to $\bU_{n,p,f}$, the polar integration formula \eqref{eq:PolarIngeration}, together with Lemma \ref{lem:DistributionConeMeasure} (i), yield the identity
\begin{align}
\nonumber&\int_{\B_p^n} h(x) \,\bM(\dint x)\\
\nonumber&\qquad= \int_{\B_p^n} h(x) \, \Phi(x) \, \bU_{n,p,f}(\dint x)\\
\nonumber&\qquad= C_{n,p,f}\, \Gamma\Big({n+m\over p}+1\Big)\int_{\B_p^n} h(x) \, \varphi(\|x\|_p)\,f(x) \, \dint x\\
\nonumber&\qquad= (n+m) \, C_{n,p,f} \, p^{-1} \,\Gamma\Big({n+m\over p}\Big)\, n\, \vol_n(\B_p^n)\int_0^1\varphi(s)\,s^{n+m-1}\int_{\SSS_p^{n-1}} f(y)\,h(sy)\, \bC_{n,p}(\dint y)\,\dint s\\
&\qquad= (n+m)\int_0^1\varphi(s)\,s^{n+m-1}\,\bE h\Big(s{X\over\|X\|_p}\Big) \,\dint s.\label{eq:PolarIntegrationFinal2}
\end{align}
The claim follows by comparing \eqref{eq:PolarIntegrationFinal1} with \eqref{eq:PolarIntegrationFinal2}. Again, the same follows in the non-negative setting by using the non-negative polar integration formula from Corollary \ref{cor:PolarIngerationPositive} for $K=\B_{p,+}^n$.
\end{proof}
\begin{rmk}
Taking $f\equiv 1$, which is homogeneous of degree $m=0$, reduces $\bU_{n,p,f, \ssbp}$ to $\bU_{n,p, \ssbp}$ on $\B_{p, \ssbp}^n$ and $\bC_{n,p,f, \ssbp}$ to $\bC_{n,p, \ssbp}$. As a consequence, Theorem \ref{thm:ProbablisticRepresentation} turns into \cite[Theorem 3]{BartheGuedonEtAl} (see Proposition \ref{prop:Barthe}), as already pointed out above.
\end{rmk}
Let us now consider a few specific distributions for $\bW$ and observe the corresponding distributions $\bP_{n,p,\bW,f, \ssbp}$ on $\B_{p, \ssbp}^n$.

\begin{example}\rm 
Let $f\in \mathscr{F}_m^+(\R^n_{\ssbp})$ and $\bW=\delta_0$ be the Dirac measure at $0$. Then $\Psi_f\equiv 0$ and $\bW(\{0\})=1$, thus for $\bP_{n,p,\bW,f, \ssbp}$ we obtain the weighted cone probability measure $\bC_{n,p,f, \ssbp}$ on $\B^n_{p, \ssbp}$.
\end{example}

\begin{example}\label{ex:Uniform}\rm 
Let $f\in \mathscr{F}_m^+(\R^n_{\ssbp})$ and $\bW=\textbf{E}(1)$ be the exponential distribution with parameter $1$. In this case, we get 
\begin{align*}
\psi_f(s) &= \displaystyle {1\over  \Gamma({n+m\over p}+1)}{1\over (1-s^p)^{{n+m\over p}+1}}\bigg[\int_{(0,\infty)} w^{n+m\over p}e^{-{s^p\over 1-s^p}w}\,\textbf{E}(1)(\dint w)\bigg]\\
&={1\over  \Gamma({n+m\over p}+1)}{1\over (1-s^p)^{{n+m\over p}+1}}\bigg[\int_{(0,\infty)}   w^{({n+m\over p}+1)-1}e^{-{1\over 1-s^p}w} \, \dint w \bigg] = 1.
\end{align*}
Thus, $\bP_{n,p,\bW,f, \ssbp}$ is the weighted uniform distribution $\bU_{n,p,f, \ssbp}$ on $\B^n_{p, \ssbp}$.
\end{example}
\begin{example}\label{ex:Beta}\rm 
As a third example, we consider $f\in \mathscr{F}_m^+(\R^n_{\ssbp})$ and $\bW=\bG(\alpha,1)$ to be a gamma distribution with shape $\alpha>0$ and rate $1$. In this situation the random variable $X\over(\|X\|_p^p+W)^{1/p}$ generates a beta-type distribution $\bP_{n,p,\bW,f, \ssbp} = \Psi_f\bU_{n,p,f, \ssbp}$ on $\B_{p,\ssbp}^n$, whose density is a constant multiple of $x\mapsto(1-\|x\|_p^p)^{\alpha-1}$, $\|x\|_p\leq 1$. To see that, we set $\bW = \textbf{G}(\alpha,1)$ and compute $\psi_f(s)$ for $s\in[0,1]$:
\begin{align*}
\psi_f(s) &= \displaystyle {1\over\Gamma({n+m\over p}+1)}{1\over (1-s^p)^{{n+m\over p}+1}}\bigg[\int_{(0,\infty)} w^{n+m\over p}e^{-{s^p\over 1-s^p}w}\,\textbf{G}(\alpha,1)(\dint w)\bigg]\\
&= \displaystyle {1\over  \Gamma(\alpha)\Gamma({n+m\over p}+1)}{1\over (1-s^p)^{{n+m\over p}+1}}\bigg[\int_{(0,\infty)}   w^{(\alpha + {n+m\over p})-1}e^{-{1\over 1-s^p}w}  \, \dint w \bigg]\\
&=\displaystyle {\Gamma(\alpha + {n+m\over p})\over \Gamma(\alpha)\Gamma({n+m\over p}+1)}  (1-s^p)^{\alpha -1}.
\end{align*}
\end{example}

\begin{rmk}\label{rmk:NormPropRepExpGamma}
In \cite[Lemma 4.2]{KPTEnsembles} (and Lemma \ref{lem:DistributionConeMeasure} (ii)) we have seen a different probabilistic representation for the uniform distribution $\bU_{n,p,f, \ssbp}$ to that in Example \ref{ex:Beta}, namely $U^{1\over n+m}{X\over\|X\|_p}$, where $U$ is uniformly distributed on $[0,1]$ and independent of $X$. However, these two representations are equivalent. Indeed, since both are $p$-radially symmetric, it is sufficient to prove that the distributions of the $p$-norms of the random variables $U^{1\over n+m}{X\over\|X\|_p}$ and ${X\over(\|X\|_p^p+W)^{1/p}}$ with $W \sim \bW = \textbf{G}(\alpha,1)$, $\alpha =1$, are the same. For this we start by noticing that
\begin{align*}
\Pro(\|X\|_p^p\leq t) = \Pro(\|X\|_p\leq t^{1/p}) = C_{n,p,f}\int_{\{x\in\R^n:\|x\|_p\leq t^{1/p}\}}  e^{-\|x\|_p^p}f(x) \, \dint x.
\end{align*}
Using the polar integration formula \eqref{eq:PolarIngeration}, the fact that $f$ is homogeneous of degree $m$, and the substitution $s=r^p$, we deduce that
\begin{align*}
\Pro(\|X\|_p^p\leq t) &= C_{n,p,f}\, n\, \vol_n(\B_p^n)\int_0^{t^{1/p}} r^{n-1}\,e^{-r^p}\int_{\SSS_p^{n-1}} f(ry)\, \bC_{n,p}(\dint y)\, \dint r\\
&=C_{n,p,f}\,n\,\vol_n(\B_p^n)\int_0^{t^{1/p}} r^{n+m-1}\,e^{-r^p}\int_{\SSS_p^{n-1}} f(y)\,\bC_{n,p}(\dint y) \, \dint r\\
&= \frac{1}{\Gamma\Big({n+m\over p}\Big)}\,\int_0^t  s^{{n+m\over p}-1} \, e^{-s}\, \dint s.
\end{align*}
This proves that $\|X\|_p^p\sim \textbf{G}({n+m \over p}, 1)$. By 
the well-known relation between the gamma and the beta distribution, this implies that 
$$
{\|X\|_p^p\over \|X\|_p^p+W} \sim \bB\Big({n+m\over p},1\Big).
$$
The proof is completed by noting that $U^{p\over n+m}$ follows precisely the same beta distribution. Note that taking the $p$-norm of $X\over(\|X\|_p^p+W)^{1/p}$ for $\bW=\textbf{G}(\alpha,1)$ for some $\alpha>0$, by the same arguments, yields 
\begin{equation*}
{\|X\|_p^p\over \|X\|_p^p+W} \sim \bB\Big({n+m\over p},\alpha\Big).
\end{equation*}
By analogue arguments the same holds for $X \sim \bU_{n,p,f, +}$.
\end{rmk}

Choosing $\bW$ to be a gamma distribution in the distribution $\bP_{n,p,\bW,f, \ssbp}$ leaves us simply with $\bP_{n,p,\bW,f, \ssbp} = \Psi_f\bU_{n,p,f, \ssbp}$, as $\bW(\{0\})=\bG(a,b)(\{0\})=0$ for all $a,b>0$. So, in this case all probability mass is distributed within the interior of $\B^n_{p, \ssbp}$. But we are also interested in cases where a certain amount of probability mass remains at the boundary. For this we consider the mixture $\bP_{n,p,\bW,f, \ssbp}= \vartheta \bC_{n,p,f, \ssbp}+(1-\vartheta)\Psi_f\bU_{n,p,f, \ssbp}$ for $\vartheta\in[0,1]$, which is simply a convex combination of weighted cone probability measure and weighted uniform distribution. This will be the main class of distributions we will consider in Section \ref{sec:ApplicationEucl} and Section \ref{sec:ApplicationMat} below. In this context, the following two propositions will turn out to be useful. The first one shows that for a specific choice of $\bW$ the random vector from Theorem \ref{thm:ProbablisticRepresentation} generates the required distribution. The second deals with the $p$-norm of that random vector. 

\begin{proposition} \label{prop:ProbRepMixedDistributionConeUnif}\rm 
Let $\vartheta \in [0,1]$, $\alpha \in (0,\infty)$, and consider the probability measure $\bW = \vartheta\delta_0 + (1-\vartheta)\,\textbf{G}(\alpha,1)$. Other than that, we assume the setup of Theorem \ref{thm:ProbablisticRepresentation}. Then the random vector $X\over(\|X\|_p^p+W)^{1/p}$ generates the distribution $\bP_{n,p,\bW,f, \ssbp} = \vartheta \,\bC_{n,p,f, \ssbp}+(1-\vartheta)\Psi_f\bU_{n,p,f, \ssbp}$.
\end{proposition}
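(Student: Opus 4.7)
The plan is to apply Theorem \ref{thm:ProbablisticRepresentation} directly to the specific mixture $\bW = \vartheta\delta_0 + (1-\vartheta)\textbf{G}(\alpha,1)$, and to exploit the fact that the representation of $\bP_{n,p,\bW,f,\ssbp}$ given there depends linearly on $\bW$. More precisely, both the mass $\bW(\{0\})$ and the inner integral $\int_{(0,\infty)} w^{(n+m)/p}\, e^{-{s^p\over 1-s^p}w}\,\bW(\dint w)$ entering the definition of $\psi_f$ split additively along the decomposition $\bW=\vartheta\delta_0+(1-\vartheta)\textbf{G}(\alpha,1)$.

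First I would compute $\bW(\{0\})$. Since $\delta_0(\{0\})=1$ while $\textbf{G}(\alpha,1)(\{0\})=0$, we immediately obtain $\bW(\{0\})=\vartheta$, which produces the coefficient in front of the cone-probability part $\bC_{n,p,f,\ssbp}$. Next I would look at the integral defining $\psi_f$. Because the domain of integration is the open half-line $(0,\infty)$, the Dirac component contributes nothing, and we are left with
\[
\psi_f(s) = {1-\vartheta\over \Gamma\big({n+m\over p}+1\big)}{1\over (1-s^p)^{{n+m\over p}+1}}\int_{(0,\infty)} w^{n+m\over p} \, e^{-{s^p\over 1-s^p}w}\,\textbf{G}(\alpha,1)(\dint w).
\]
The remaining integral is precisely the one evaluated in Example \ref{ex:Beta}, so by the calculation carried out there it equals a constant multiple of $(1-s^p)^{\alpha-1}$. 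This shows that the $p$-radial density produced by Theorem \ref{thm:ProbablisticRepresentation} for our mixture $\bW$ is exactly $(1-\vartheta)$ times the density $\Psi_f$ of Example \ref{ex:Beta}.

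Combining the two parts, Theorem \ref{thm:ProbablisticRepresentation} yields
\[
\bP_{n,p,\bW,f,\ssbp} \;=\; \vartheta\,\bC_{n,p,f,\ssbp} + (1-\vartheta)\,\Psi_f\,\bU_{n,p,f,\ssbp},
\]
as required. There is no real obstacle here: the whole statement is essentially a linearity-in-$\bW$ observation applied to the representation established in Theorem \ref{thm:ProbablisticRepresentation}, and the only computation that needs to be carried out (the gamma-in-$w$ integral) has already been performed in Example \ref{ex:Beta}. The only minor point to keep track of is that the atom of $\bW$ at $0$ enters the representation solely through the prefactor $\bW(\{0\})=\vartheta$ of $\bC_{n,p,f,\ssbp}$ and not through the integral defining $\Psi_f$, since that integral is taken over $(0,\infty)$.
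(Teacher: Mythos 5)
Your proof is correct and follows essentially the same idea as the paper's: both exploit the fact that the representation in Theorem \ref{thm:ProbablisticRepresentation} depends linearly on $\bW$, with the Dirac mass at $0$ feeding only into the coefficient of $\bC_{n,p,f,\ssbp}$ and the gamma part feeding only into the integral defining $\psi_f$. The only cosmetic difference is that the paper decomposes the outer integral $\int_{[0,\infty)}\bE h(\cdot)\,\bW(\dint w)$ directly at the expectation level (invoking Lemma \ref{lem:DistributionConeMeasure}(i) and Example \ref{ex:Beta}), whereas you decompose the closed-form expression for $\psi_f$ itself; both routes give the identity $\Psi_f^{\bW}=(1-\vartheta)\Psi_f^{\mathbf G(\alpha,1)}$ and hence the claimed convex combination.
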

\begin{proof}
Let $h:\R^n\to\R$ be a non-negative measurable function. Then, following the arguments in the proof of Theorem \ref{thm:ProbablisticRepresentation}, and using the results from Lemma \ref{lem:DistributionConeMeasure} (i) and Example \ref{ex:Beta}, we get
\begin{align*}
\nonumber\bE h\Big({X\over (\|X\|_p^p+W)^{1/p}}\Big) &= \int_{[0,\infty)} \bE h\Big({X\over (\|X\|_p^p+w)^{1/p}}\Big) \bW(\dint w)\\
&=  \bE h\Big({X\over \|X\|_p}\Big) \bW(\{0\}) + \int_{(0,\infty)} \bE h\Big({X\over (\|X\|_p^p+w)^{1/p}}\Big) \bW(\dint w)\\
&= \vartheta\int_{\SSS^{n-1}_p}h(x) \, \bC_{n,p,f}(\dint x)+  (1-\vartheta)\int_{(0,\infty)} \bE h\Big({X\over (\|X\|_p^p+w)^{1/p}}\Big) \,\textbf{G}(\alpha,1)(\dint w)\\
&= \vartheta\int_{\SSS^{n-1}_p}h(x) \, \bC_{n,p,f}(\dint x)+  (1-\vartheta)\int_{\B^n_p} h(x) \, \Psi_f\bU_{n,p,f}(\dint x).
\end{align*}
This completes the proof.
\end{proof}

\begin{proposition} \label{prop:DistributionNormB}\rm 
We assume the same setup as in Theorem \ref{thm:ProbablisticRepresentation} for the specific choice $\bW = \vartheta\delta_0 + (1-\vartheta)\,\textbf{G}(\alpha,1)$, where $\vartheta \in [0,1]$ and $\alpha \in (0,\infty)$. Then the random variable $ B := {\|X\|_p^p\over \|X\|_p^p+W}$ has distribution $\vartheta\delta_1 + (1-\vartheta)\bB\big({n+m \over p}, \alpha\big)$.
\end{proposition}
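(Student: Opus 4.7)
The plan is to condition on the two components of the mixture defining $\bW$. Write $\bW = \vartheta \delta_0 + (1-\vartheta)\,\textbf{G}(\alpha,1)$ and let $I$ be an auxiliary Bernoulli indicator with $\Pro(I=0)=\vartheta$ and $\Pro(I=1)=1-\vartheta$, chosen so that conditionally on $I=0$ we have $W=0$ and conditionally on $I=1$ we have $W\sim\textbf{G}(\alpha,1)$ independent of $X$. Then for any Borel set $A \subset [0,1]$,
\[
\Pro(B\in A) = \vartheta \,\Pro(B\in A\mid I=0) + (1-\vartheta)\,\Pro(B\in A\mid I=1).
\]

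On the event $\{I=0\}$ we have $W=0$, so $B = \|X\|_p^p/\|X\|_p^p = 1$ almost surely; this contributes the atom $\vartheta\,\delta_1$. The main step is therefore to show that on $\{I=1\}$, where $W\sim\textbf{G}(\alpha,1)$ is independent of $X$, the ratio $\|X\|_p^p/(\|X\|_p^p+W)$ has a $\bB\big(\tfrac{n+m}{p},\alpha\big)$ distribution. For this I would invoke the computation carried out in Remark \ref{rmk:NormPropRepExpGamma}, which shows (via the polar integration formula and the $m$-homogeneity of $f$) that $\|X\|_p^p \sim \textbf{G}\big(\tfrac{n+m}{p},1\big)$. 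Since $\|X\|_p^p$ and $W$ are independent gamma random variables with common rate parameter $1$, the classical gamma–beta relation yields
\[
\frac{\|X\|_p^p}{\|X\|_p^p+W} \sim \bB\Big(\tfrac{n+m}{p},\alpha\Big).
\]

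Combining the two cases gives $\Pro(B\in A) = \vartheta\,\delta_1(A) + (1-\vartheta)\,\bB\big(\tfrac{n+m}{p},\alpha\big)(A)$, which is the claim. The only nontrivial ingredient is the identification of the distribution of $\|X\|_p^p$ under the weighted density $C_{n,p,f,\ssbp}\,e^{-\|x\|_p^p}f(x)$, but this is exactly what Remark \ref{rmk:NormPropRepExpGamma} supplies (in both the $\B_p^n$ and $\B_{p,+}^n$ settings), so no new computation is required. I do not anticipate any serious obstacle; the argument is essentially a one-line application of conditioning together with the cited gamma–beta identity.
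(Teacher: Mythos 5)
Your proof is correct and takes essentially the same approach as the paper: both decompose the mixture $\bW = \vartheta\delta_0 + (1-\vartheta)\,\textbf{G}(\alpha,1)$, observe that the $\delta_0$-component makes $B=1$ and contributes the atom $\vartheta\delta_1$, and then invoke the computation from Remark \ref{rmk:NormPropRepExpGamma} (that $\|X\|_p^p \sim \textbf{G}\big(\tfrac{n+m}{p},1\big)$) together with the gamma--beta relation to handle the $\textbf{G}(\alpha,1)$-component. Your use of an explicit Bernoulli indicator $I$ is just a cosmetic reformulation of the paper's direct integration against $\bW(\dint w)$.
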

\begin{proof} Let $A \subset \R$ be a Borel set. Then, by the same arguments as in Remark \ref{rmk:NormPropRepExpGamma}, we get
\begin{center}
$
\begin{array}{lcl}
\displaystyle \Pro(B \in A) &=& \displaystyle\Pro\Big( {\|X\|_p^p\over \|X\|_p^p+W} \in A\Big)= \displaystyle \int\limits_{[0,\infty)} \Pro\Big( {\|X\|_p^p\over \|X\|_p^p+w} \in A\Big) \bW(\dint w)\\
&=& \displaystyle \Pro(1 \in A)  \, \vartheta  \delta_{0}(\{0\}) + \int\limits_{(0,\infty)} \Pro\Big( {\|X\|_p^p\over \|X\|_p^p+w} \in A\Big) (1-\vartheta)\,\textbf{G}(\alpha,1)(\dint w)\\
&=& \displaystyle \vartheta \delta_1(A) + (1-\vartheta)\,\bB\Big({n+m \over p}, \alpha\Big)(A).
\end{array}
$
\end{center}
The proof is thus complete.
\end{proof}

\section{Eigen- and singular value distributions on $\B_{p,\beta}^{n,{\mathscr{H}}}$ and $\B_{p,\beta}^{n,{\mathscr{M}}}$}\label{sec:EigenSingularDistr}

\subsection{Eigenvalue distribution for self-adjoint random matrices in matrix $p$-balls}\label{subsec:EigenvalueDistr}
After having studied the Euclidean case, we now turn to the eigenvalue distributions for self-adjoint random matrices in matrix $p$-balls. The following theorem shows how the distribution $\bP_{n,p,\bW,\beta}^{\mathscr{H}}$ in matrix $p$-balls is connected to the weighted $p$-radial distribution $\bP_{n,p,\bW,f}$ in Euclidean $\ell_p^n$-balls studied above, and generalizes the probabilistic representation in \cite[Corollary 4.3]{KPTEnsembles}, using a similar method of proof to do so, based on polar integration and the Weyl integration formula.\\
\\
Before we proceed with our main result, let us present the aforementioned tool we will need during its proof: the Weyl integration formula for $\mathscr H_n(\mathbb{F}_\beta)$, see \cite[Proposition 4.1.1]{AGZ2010} and also \cite[Proposition 4.1.14]{AGZ2010}. It states that for any non-negative measurable function $f:\mathscr H_n(\mathbb{F}_\beta)\to [0,\infty)$, such that $f(A)$ only depends on the eigenvalues of $A$, we have that
\begin{equation}\label{eq:WeylIntegration}
\int_{\mathscr H_n(\mathbb{F}_\beta)} f(A) \, \vol_{\beta,n}(\dint A) = c_{n,\beta}^{\mathscr{H}}\int_{\R^n} f(\lambda)\prod_{1\leq i<j\leq n}|\lambda_i-\lambda_j|^\beta \, \dint \lambda,
\end{equation}
where for every $\lambda=(\lambda_1,\ldots,\lambda_n)\in\R^n$ we write $f(\lambda)=f(A)$ for any matrix $A\in \mathscr H_n(\mathbb{F}_\beta)$ with (unordered) eigenvalues $\lambda_1, \ldots,\lambda_n$, and the constant $c_{n,\beta}^{\mathscr H}$ is given by
$$
c_{n,\beta}^{\mathscr{H}} := {1\over n!}\bigg({2\pi^{\beta/2}\over\Gamma({\beta\over 2})}\bigg)^{-n}\,{\prod\limits_{k=1}^n{2(2\pi)^{\beta k/2}\over 2^{\beta/2}\Gamma({\beta k\over 2})}}.
$$
To distinguish between the distributions of random eigenvalues in the standard increasing order and in unordered form, we will use the following version of the Weyl integration formula
\begin{equation}\label{eq:WeylIntegrationOrdered}
\int_{\mathscr H_n(\mathbb{F}_\beta)} f(A) \, \vol_{\beta,n}(\dint A) = n! \, c_{n,\beta}^{\mathscr{H}}\int_{\R^n} f(\lambda)\prod_{1\leq i<j\leq n}|\lambda_i-\lambda_j|^\beta \,{\bf 1}_{\{x\in\R^n:x_1\le\ldots \le x_n\}}(\lambda) \,  \dint\lambda.
\end{equation}
We do so to carry out most of the proof of the main theorem of this section in the more canonical increasingly ordered setting, so we only need to apply an appropriate permutation argument at the very end.
Our next result is derived by application of Weyl's integration formula in connection with the polar integration formula. In the case that $\bW(\{0\})=0$ it could also be deduced from a classical formula in \cite{R1984}, which is essentially based on the same ingredients, see also \cite[Lemma 4.3.1]{IsotropicConvexBodies}. We present a detailed argument for completeness. The following functions and normalization terms are needed for said result: For $x\in\R^n$, set 
$$ \Delta_\beta(x):= \prod_{1\leq i<j\leq n}|x_i-x_j|^\beta,$$
which is the repulsion factor of the eigenvalues of a random matrix given by the Weyl integration formula \eqref{eq:WeylIntegration}. Additionally, in the spirit of \eqref{eq:NormConst}, define a constrant $C_{n,p,\Delta_\beta}$ such that
\begin{equation*} \label{eq:NormConstEV1}
C_{n,p,\Delta_\beta}\int_{\R^n} \Delta_\beta(x)\,e^{-\|x\|_p^p} \,\dint x = 1.
\end{equation*}
Further, we define the function  $\Delta_\beta^c(x) := C_{\Delta_\beta} \,\Delta_\beta(x)$ with a more elaborate normalization factor
$$C_{\Delta_\beta} := {c_{n,\beta}^{\mathscr{H}} \over \vol_{\beta,n}\big(\B_{p,\beta}^{n, \mathscr{H}} \big) \,C_{n,p,\Delta_\beta} \,\Gamma\Big({n+m\over p} +1\Big)},$$
where $m=\frac{1}{2}\beta n(n-1)$ is the degree of homogeneity of $\Delta_\beta(x)$. Lastly, we define annother normalization constant $C_{n,p,\Delta_\beta^c}$ in the spirit of \eqref{eq:NormConst} satisfying
\begin{equation*} \label{eq:NormConstEV2}
C_{n,p,\Delta_\beta^c}\int_{\R^n} \Delta_\beta^c(x)\,e^{-\|x\|_p^p} \,\dint x = 1.
\end{equation*}
\begin{thm}\label{thm:EvDistr}
Let $0<p < \infty$, $\beta\in\{1,2,4\}$ and $\bW$ be a Borel probability measure on $[0,\infty)$. Let $W$ be a real random variable with distribution $\bW$ and, independently of $W$, $X$ be a random vector with density $C_{n,p,\Delta_\beta^c} \,e^{-\|x\|_p^p} \, \Delta_\beta^c(x)$, $x\in\R^n$, with respect to the Lebesgue measure. %
%
Let $Z$ be a random matrix with distribution $\bP_{n,p,\bW,\beta}^{\mathscr{H}} :=\bW(\{0\})\bC_{n,p,\beta}^{\mathscr{H}}+\Psi^{\mathscr{H}}\bU_{n,p,\beta}^{\mathscr{H}}$ on $\B_{p,\beta}^{n, \mathscr{H}}$, where $\Psi^{\mathscr{H}}(A) := \Psi_{\Delta^c_\beta}(\lambda(A)) = \psi_{\Delta_\beta^c}(\|\lambda(A)\|_p)$ for $A\in \B_{p,\beta}^{n,\mathscr{H}}$, and $\psi_{\Delta_\beta^c}$ is defined as in Theorem \ref{thm:ProbablisticRepresentation} for $f=\Delta_\beta^c$. Independently, let $\sigma$ be a uniform random permutation in the symmetric group on $n$ elements. Then 
$$
\lambda_\sigma(Z):=\big(\lambda_{\sigma(1)}(Z),\ldots,\lambda_{\sigma(n)}(Z)\big) \quad\text{and}\quad{X\over(\|X\|_p^p+W)^{1/p}}
$$
are identically distributed with distribution $\bP_{n,p,\bW,\Delta_\beta^c} :=\bW(\{0\})\bC_{n,p,\Delta_\beta^c}+\Psi_{\Delta_\beta^c}\bU_{n,p,\Delta_\beta^c}$ on $\B_p^n$.
\end{thm}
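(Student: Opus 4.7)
The plan is to combine Theorem~\ref{thm:ProbablisticRepresentation} with Weyl's integration formula in order to match both distributions. First, observe that $\Delta_\beta^c$ is continuous, non-negative, positively $m$-homogeneous with $m=\frac{\beta n(n-1)}{2}$, and hence integrable against $\bC_{n,p}$ on $\SSS_p^{n-1}$, so $\Delta_\beta^c\in\mathscr F_m^+(\R^n)$. Theorem~\ref{thm:ProbablisticRepresentation} applied to $f=\Delta_\beta^c$ yields directly that $X/(\|X\|_p^p+W)^{1/p}\sim\bP_{n,p,\bW,\Delta_\beta^c}$ on $\B_p^n$. It therefore suffices to show $\lambda_\sigma(Z)\sim\bP_{n,p,\bW,\Delta_\beta^c}$, which I would do by fixing an arbitrary bounded measurable $h:\R^n\to[0,\infty)$ and verifying that $\E h(\lambda_\sigma(Z))=\int h\,\dint\bP_{n,p,\bW,\Delta_\beta^c}$.

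To compute $\E h(\lambda_\sigma(Z))$, I would split it along the mixture $\bP_{n,p,\bW,\beta}^{\mathscr H}=\bW(\{0\})\bC_{n,p,\beta}^{\mathscr H}+\Psi^{\mathscr H}\bU_{n,p,\beta}^{\mathscr H}$ and handle each summand separately. In both, I would first integrate out the independent permutation $\sigma$, turning the integrand into $\tilde h(\lambda(A)):=\frac{1}{n!}\sum_{\pi\in S_n}h(\pi\cdot\lambda(A))$, which depends only on the unordered spectrum of $A$. This is the step that makes Weyl's formula~\eqref{eq:WeylIntegration} applicable. For the weighted-uniform part, Weyl's formula transforms the matrix integral over $\B_{p,\beta}^{n,\mathscr H}$ into a Euclidean integral over $\B_p^n$ against $\Delta_\beta(\lambda)$ times the $p$-radial factor $\psi_{\Delta_\beta^c}(\|\lambda\|_p)$; since $\|\cdot\|_p$, $\Delta_\beta$, $\psi_{\Delta_\beta^c}$ and $\B_p^n$ are all invariant under permutation of coordinates, the symmetrization $\frac{1}{n!}\sum_\pi$ can be undone by the change of variables $\lambda\mapsto\pi^{-1}\lambda$, leaving a clean integral of $h(\lambda)\psi_{\Delta_\beta^c}(\|\lambda\|_p)\Delta_\beta(\lambda)$. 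For the cone part I would first apply the matrix-space polar integration (Lemma~\ref{lem:PolarIngerationAllg}) with $K=\B_{p,\beta}^{n,\mathscr H}$ to write $\int g\,\dint\bC_{n,p,\beta}^{\mathscr H}=\vol_{\beta,n}(\B_{p,\beta}^{n,\mathscr H})^{-1}\int_{\B_{p,\beta}^{n,\mathscr H}}g(A/\|\lambda(A)\|_p)\,\dint A$ for spectrally invariant $g$, then apply Weyl, and finally the Euclidean polar integration formula~\eqref{eq:PolarIngeration} to rewrite the outcome as an integral against $\bC_{n,p}$ on $\SSS_p^{n-1}$ with weight $\Delta_\beta(y)$.

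The main bookkeeping task, and where I expect the bulk of the effort to lie, is verifying that the prefactors produced by these Weyl/polar transformations coincide exactly with the constants appearing in the densities of $\bU_{n,p,\Delta_\beta^c}$ and $\bC_{n,p,\Delta_\beta^c}$ from Section~\ref{sec:Weighted}. The elaborate definition of $C_{\Delta_\beta}$ is engineered precisely for this. Computing $\int_{\R^n}\Delta_\beta(x)\,e^{-\|x\|_p^p}\,\dint x$ by applying Lemma~\ref{lem:PolarIngerationAllg} to $A\mapsto e^{-\|\lambda(A)\|_p^p}$ on $\mathscr H_n(\mathbb F_\beta)$ and inverting Weyl gives the explicit value $\vol_{\beta,n}(\B_{p,\beta}^{n,\mathscr H})\,\Gamma(\frac{n+m}{p}+1)/c_{n,\beta}^{\mathscr H}$, hence $C_{n,p,\Delta_\beta}=c_{n,\beta}^{\mathscr H}/(\vol_{\beta,n}(\B_{p,\beta}^{n,\mathscr H})\Gamma(\frac{n+m}{p}+1))$ and, upon substitution, $C_{\Delta_\beta}=1$; in particular $\Delta_\beta^c=\Delta_\beta$ and $C_{n,p,\Delta_\beta^c}=C_{n,p,\Delta_\beta}$. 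With this identification, the constants on both sides align, and comparing the resulting densities/measures against the arbitrary test function $h$ yields the desired equality of distributions for both the uniform and the cone parts of the mixture simultaneously.
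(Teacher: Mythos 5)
Your argument is correct and is built from the same ingredients as the paper's proof (the two Weyl formulas, the matrix-space and Euclidean polar integration lemmas, Lemma~\ref{lem:DistributionConeMeasure}, and Theorem~\ref{thm:ProbablisticRepresentation}); the two proofs differ only in how the ordering/permutation bookkeeping is organized. You integrate out $\sigma$ first, replace $h$ by its symmetrization $\tilde h(\lambda)=\tfrac{1}{n!}\sum_{\pi}h(\pi\cdot\lambda)$, and then apply the \emph{unordered} Weyl formula \eqref{eq:WeylIntegration}, undoing the symmetrization at the end via a change of variables that exploits the permutation invariance of $\Delta_\beta$, $\|\cdot\|_p$ and $\B_p^n$. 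The paper instead applies the \emph{ordered} variant \eqref{eq:WeylIntegrationOrdered}, propagating the extra factor $n!$ and the indicator $\mathbf 1_{\{x_1\le\dots\le x_n\}}$ through every step and introducing $\sigma$ only at the very end to cancel them. These are equivalent, and yours is arguably the cleaner packaging. The other genuine difference is that you explicitly verify $C_{\Delta_\beta}=1$ (hence $\Delta_\beta^c=\Delta_\beta$); the paper does not compute this, but its final step — matching the symbolic prefactor $n!\,c_{n,\beta}^{\mathscr H}/\bigl(\vol_{\beta,n}(\B_{p,\beta}^{n,\mathscr H})\,C_{n,p,\Delta_\beta}\,\Gamma(\tfrac{n+m}{p}+1)\bigr)=n!\,C_{\Delta_\beta}$ against $n!$ — silently relies on $C_{\Delta_\beta}=1$, since rescaling $f$ by a constant leaves the probability measures $\bC_{n,p,f}$, $\bU_{n,p,f}$ and the density $\psi_f$ unchanged, so $\bC_{n,p,\Delta_\beta}=\bC_{n,p,\Delta_\beta^c}$. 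Your explicit computation of $\int_{\R^n}\Delta_\beta\,e^{-\|x\|_p^p}\,\dint x$ via polar integration on $\mathscr H_n(\F_\beta)$ is therefore a worthwhile clarification; note that it requires the dimension of $\mathscr H_n(\F_\beta)$ to be $n+m=n+\tfrac{\beta n(n-1)}{2}$, which is the correct count (e.g.\ $n^2$ for $\beta=2$), whereas Remark~\ref{rmk:starsharped}~(i) states $\tfrac{\beta n(n-1)}{2}+\beta n$ — a typo for $\beta\in\{2,4\}$ that happens not to matter where the paper invokes it (there the $\int_0^1 N r^{N-1}\dint r=1$ step is insensitive to $N$), but would break the identity $C_{\Delta_\beta}=1$ if taken at face value.
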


\begin{rmk}\text{}
	\begin{itemize}
		\item[(i)]{If $\bW$ is the Dirac measure at 0 (that is, $\bP_{n,p,\bW,\beta}^{\mathscr{H}} =\bC_{n,p,\beta}^{\mathscr{H}}$) or the exponential distribution with parameter $1$ (that is, $\bP_{n,p,\bW,\beta}^{\mathscr{H}} =\bU_{n,p,\beta}^{\mathscr{H}}$)  the result was previously obtained in \cite{KPTEnsembles}.}
		\item[(ii)]{We can see that the current definition $\Psi^{\mathscr{H}}(A) := \psi_{\Delta_\beta^c}(\|\lambda(A)\|_p)$ coincides with that of $\Psi^{\mathscr{H}}(A)$ from \eqref{eq:PnpwSA}, as the degree of homogeneity $m=\frac{1}{2}\beta n(n-1)$ is the same.}
	\end{itemize}

\end{rmk}

\begin{proof}[Proof of Theorem \ref{thm:EvDistr}]
Let $h:\R^n\to\R$ be a non-negative measurable function and $\tilde{h}: \mathscr H_n(\mathbb{F}_\beta) \to \R$ given by $\tilde{h}(A):=h(\lambda(A))$. %
We now want to compute $\bE \tilde{h}(Z)$:
\begin{align}
\nonumber \bE \tilde{h}(Z) &= \int_{\mathscr H_n(\mathbb{F}_\beta)} \tilde{h}(A)\,\big(\bW(\{0\})\bC_{n,p, \beta}^{\mathscr{H}} + \Psi^{\mathscr{H}} \bU_{n,p, \beta}^{\mathscr{H}} \big) (\dint A)\\
 &=\bW(\{0\})\int_{\SSS_{p,\beta}^{n-1, \mathscr{H}} } h(\lambda(A))\,\bC_{n,p, \beta}^{\mathscr{H}}(\dint A) + \int_{\B_{p,\beta}^{n, \mathscr{H}}} h(\lambda(A))\, \Psi^{\mathscr{H}} \bU_{n,p, \beta}^{\mathscr{H}}(\dint A). \label{eq:ProofMatPropRep}
\end{align}
Consider the radial extension $h\big({ \lambda(A) / \|\lambda(A)\|_p}\big)$, $A \in \mathscr H_n(\mathbb{F}_\beta)$, of $h(\lambda(A))$ from $\SSS_{p,\beta}^{n-1, \mathscr{H}}$ onto $\mathscr H_n(\mathbb{F}_\beta)$. By Remark \ref{rmk:starsharped} we can apply the polar integration formula from Lemma \ref{lem:PolarIngerationAllg} to $\B_{p,\beta}^{n, \mathscr{H}}$ to get
\begin{eqnarray*}\label{eq:polarext}
&&\nonumber \int_{\B_{p,\beta}^{n, \mathscr{H}}} h\Big({ \lambda(A) \over \|\lambda(A)\|_p}\Big) \bU_{n,p, \beta}^{\mathscr{H}}(\dint A)\\
&=& \Big(\frac{\beta n (n-1)}{2} + \beta n\Big) \int_0^1 r^{\frac{\beta n (n-1)}{2} + \beta n-1} \int_{\SSS_{p,\beta}^{n-1, \mathscr{H}}} h(\lambda(A)) \,\bC_{n,p, \beta}^{\mathscr{H}}(\dint A) \dint r\\
&=& \int_{\SSS_{p,\beta}^{n-1, \mathscr{H}}} h(\lambda(A)) \, \bC_{n,p, \beta}^{\mathscr{H}}(\dint A). 
\end{eqnarray*}
With \eqref{eq:polarext} it follows that \eqref{eq:ProofMatPropRep} can be rewritten as:
\begin{align*}
\nonumber \bE \tilde{h}(Z) =&\bW(\{0\})\, \vol_{\beta,n}\big(\B_{p,\beta}^{n, \mathscr{H}} \big)^{-1} \int_{\B_{p,\beta}^{n, \mathscr{H}}} h\Big({\lambda(A)\over \|\lambda(A)\|_p}\Big) \vol_{\beta,n}(\dint A)\\
& + \vol_{\beta,n}\big(\B_{p,\beta}^{n, \mathscr{H}} \big)^{-1} \int_{\B_{p,\beta}^{n, \mathscr{H}}} h(\lambda(A))\Psi^{\mathscr{H}}(A) \vol_{\beta,n}(\dint A). 
\end{align*}
To both of those terms on the right-hand side we can now apply the \enquote{ordered} Weyl integration formula \eqref{eq:WeylIntegrationOrdered} with respect to the functions $f_1(A) = h\big({\lambda(A) / \|\lambda(A)\|_p}\big)$ and $f_2(A) = h(\lambda(A))\,\Psi^{\mathscr{H}}(\lambda(A))$. We use the fact that $\Delta_\beta \in \mathscr{F}_m^+(\R^n)$ with $m =\frac{1}{2}\beta n(n-1)$ and $\Psi^{\mathscr{H}}= \Psi_{\Delta_\beta}= \Psi_{\Delta_\beta^c}$, to see that 
\begin{align*}
\bE \tilde{h}(Z) &=  { n! \, c_{n,\beta}^{\mathscr{H}} \over \vol_{\beta,n}\big(\B_{p,\beta}^{n, \mathscr{H}} \big)} \quad \bW(\{0\})\int_{\B^n_p} h\Big({ \lambda \over \|\lambda\|_p}\Big)\,\Delta_\beta(\lambda)\,{\bf 1}_{\{x\in\R^n:x_1 \le \ldots \le x_n\}}(\lambda)\, \dint\lambda\\
& \qquad+  {n! \, c_{n,\beta}^{\mathscr{H}} \over \vol_{\beta,n}\big(\B_{p,\beta}^{n, \mathscr{H}} \big)} \quad\int_{\B_p^n}h(\lambda)\,\Psi_{\Delta_\beta^c}(\lambda)\,\Delta_\beta(\lambda)\,{\bf 1}_{\{x\in\R^n:x_1 \le \ldots \le x_n\}}(\lambda)\, \dint\lambda\\
&= { n! \, c_{n,\beta}^{\mathscr{H}} \over \vol_{\beta,n}\big(\B_{p,\beta}^{n, \mathscr{H}} \big)} \quad \bW(\{0\})\int_{\B^n_p} h\Big({ \lambda \over \|\lambda\|_p}\Big)\,\Delta_\beta\Big({\lambda \over \|\lambda\|_p}\Big)  \|\lambda\|_p^m \,{\bf 1}_{\{x\in\R^n:x_1 \le \ldots \le x_n\}}(\lambda)\, \dint\lambda\\
&\qquad +  { n! \, c_{n,\beta}^{\mathscr{H}} \over \vol_{\beta,n}\big(\B_{p,\beta}^{n, \mathscr{H}} \big)} \quad\int_{\B_p^n}h(\lambda)\,\Psi_{\Delta_\beta^c}(\lambda)\,\Delta_\beta(\lambda)\,{\bf 1}_{\{x\in\R^n:x_1 \le \ldots \le x_n\}}(\lambda)\, \dint\lambda. 
\end{align*}
Applying now the polar integration formula from Lemma \ref{lem:PolarIngerationAllg}, we conclude that the last expression is equal to 
\begin{align*}
&{n! \, c_{n,\beta}^{\mathscr{H}} \,n\,  \vol_{n}(\B^n_p) \over \vol_{\beta,n}\big(\B_{p,\beta}^{n, \mathscr{H}} \big)} \quad \bW(\{0\}) \int\limits_0^1 r^{n+m-1} \dint r \int_{\SSS_p^{n-1}} h(\lambda)\,\Delta_\beta(\lambda)\,{\bf 1}_{\{x\in\R^n:x_1 \le \ldots \le x_n\}}(\lambda)\, \bC_{n,p}(\dint \lambda) \\
&\qquad +  {n! \, c_{n,\beta}^{\mathscr{H}} \over \vol_{\beta,n}\big(\B_{p,\beta}^{n, \mathscr{H}} \big)} \quad\int_{\B_p^n}h(\lambda)\,\Psi_{\Delta_\beta^c}(\lambda)\,\Delta_\beta(\lambda)\,{\bf 1}_{\{x\in\R^n:x_1 \le \ldots \le x_n\}}(\lambda)\, \dint \lambda \\
&= { n! \, c_{n,\beta}^{\mathscr{H}} \,n\,  \vol_{n}(\B^n_p) \over \vol_{\beta,n}\big(\B_{p,\beta}^{n, \mathscr{H}} \big) (n+m)} \quad \bW(\{0\}) \int_{\SSS_p^{n-1}} h(\lambda)\,\Delta_\beta(\lambda)\,{\bf 1}_{\{x\in\R^n:x_1 \le \ldots \le x_n\}}(\lambda)\, \bC_{n,p}(\dint \lambda) \\
&\qquad + \quad { n! \, c_{n,\beta}^{\mathscr{H}} \, \vol_{n}(\B^n_p) \over \vol_{\beta,n}\big(\B_{p,\beta}^{n, \mathscr{H}} \big)} \quad\int_{\B_p^n}h(\lambda)\,\Psi_{\Delta_\beta^c}(\lambda)\,\Delta_\beta(\lambda)\,{\bf 1}_{\{x\in\R^n:x_1 \le \ldots \le x_n\}}(\lambda)\, \bU_{n,p}(\dint \lambda).
\end{align*}
Next, we use the definition of $\bU_{n,p, f}$ and $\bC_{n,p, f}$ for $f=\Delta_\beta$ and the definition of $\Delta_\beta^c = C_{\Delta_\beta} \, \Delta_\beta$. This gives
\begin{align*}
 \bE \tilde{h}(Z) &= \displaystyle { n! \, c_{n,\beta}^{\mathscr{H}} \over \vol_{\beta,n}\big(\B_{p,\beta}^{n, \mathscr{H}} \big) \,C_{n,p,\Delta_\beta} \,\Gamma\Big({n+m\over p} +1\Big)}  \bW(\{0\})\int_{\SSS_p^{n-1}} h(\lambda)\,{\bf 1}_{\{x\in\R^n:x_1 \le \ldots \le x_n\}}(\lambda)\, \bC_{n,p,\Delta_\beta}(\dint\lambda)\\
&\qquad +  { n! \, c_{n,\beta}^{\mathscr{H}} \over \vol_{\beta,n}\big(\B_{p,\beta}^{n, \mathscr{H}} \big) \,C_{n,p,\Delta_\beta} \,\Gamma\Big({n+m\over p} +1\Big)}  \int_{\B_p^n}h(\lambda)\,\Psi_{\Delta_\beta^c}(\lambda)\,{\bf 1}_{\{x\in\R^n:x_1 \le \ldots \le x_n\}}(\lambda)\, \bU_{n,p,\Delta_\beta}(\dint\lambda)\\
&= n!\, \bW(\{0\})\int_{\SSS_p^{n-1}} h(\lambda)\,{\bf 1}_{\{x\in\R^n:x_1 \le \ldots \le x_n\}}(\lambda)\, \bC_{n,p,\Delta_\beta^c}(\dint\lambda)\\
& \qquad+ n!\,  \int_{\B_p^n}h(\lambda)\,\Psi_{\Delta_\beta^c}(\lambda)\,{\bf 1}_{\{x\in\R^n:x_1 \le \ldots \le x_n\}}(\lambda)\, \bU_{n,p,\Delta_\beta^c}(\dint\lambda). 
\end{align*}
As a consequence, when applying a uniform random permutation $\sigma \in \mathfrak{S}(n)$, we get by Theorem \ref{thm:ProbablisticRepresentation}
\begin{align*}
\bE (\tilde{h}\circ\sigma)(Z) &= \bW(\{0\})\int_{\SSS_p^{n-1}} h( \lambda  )\, \bC_{n,p,\Delta_\beta^c}(\dint\lambda)  + \int_{\B_p^n} h(  \lambda )\,\Psi_{\Delta_\beta^c}(\lambda)\,\bU_{n,p,\Delta_\beta^c}(\dint\lambda) \\
&= \bE h\Big({X\over(\|X\|_p^p+W)^{1/p}}\Big).
\end{align*}
This proves the claim.
\end{proof}
\begin{rmk}\label{rmk:NormDistrSA}\rm 
For $\Delta^c_\beta$ the degree of homogeneity is $m=\frac{\beta n(n-1)}{2}$. Thus, if $\bW=\bG(\alpha,1)$ for $\alpha>0$, by the results outlined in Remark \ref{rmk:NormPropRepExpGamma}, we have that
$$
{\|X\|_p^p\over \|X\|_p^p+W} \sim \bB\Big( \, \frac{\beta n^2}{2p} - \frac{\beta n}{2p} + \frac{n}{p} \, , \, \alpha \, \Big),
$$
and for $\bW = \vartheta\delta_0 + (1-\vartheta)\,\textbf{G}(\alpha,1)$, where $\vartheta \in [0,1]$ and $\alpha \in (0,\infty)$, we have by the arguments from Proposition \ref{prop:DistributionNormB} that
$$
{\|X\|_p^p\over \|X\|_p^p+W} \sim \vartheta\delta_1 + (1-\vartheta)\bB\Big( \, \frac{\beta n^2}{2p} - \frac{\beta n}{2p} + \frac{n}{p} \, , \, \alpha \, \Big).
$$
\end{rmk}

\subsection{Singular value distribution for non-self-adjoint random matrices in matrix $p$-balls}\label{subsec:SingularDistr}

Let us now consider the non-self-adjoint case, where the singular values take over the role of the eigenvalues. The following result is proven by almost literally repeating the proof of Theorem \ref{thm:EvDistr} (or, at least in the case that $\bW(\{0\})=0$, by applying a formula from \cite{R1984}, which corresponds to \cite[Lemma 4.3.1]{IsotropicConvexBodies} as we explained before Theorem \ref{thm:EvDistr}). However, this time the argument is based on the Weyl-type integration formula from \cite[Proposition 4.1.3]{AGZ2010}, which replaces \eqref{eq:WeylIntegration}. Proposition \cite[Proposition 4.1.3]{AGZ2010} primarily changes the repulsion factor from $\Delta_\beta$ to an appropriate $\nabla_\beta$ and the normalization constant from $c_{n,\beta}^{\mathscr{H}} $ to $c_{n,\beta}^{\mathscr{M}}$ as follows:
It says that for any non-negative measurable function $f:\mathscr M_n(\mathbb{F}_\beta)\to[0,\infty)$, such that $f(A)$ only depends on the singular values of $A$, we have that
\begin{equation}\label{eq:WeylIntegration2}
\int_{\mathscr M_n(\mathbb{F}_\beta)} f(A) \, \vol_{\beta,n}(\dint A) = c_{n,\beta}^{\mathscr{M}}\int_{\R_+^n} f(s)\prod_{1\leq i<j\leq n}|s^2_i-s^2_j|^\beta \, \prod_{i=1}^n{s_i}^{\beta-1}\, \dint s,
\end{equation}
writing $f(s)=f(A)$ for any matrix $A\in \mathscr M_n(\mathbb{F}_\beta)$ with (unordered) singular values $(s_1, \ldots,s_n) \in \R_+^n$, and where
$$
c_{n,\beta}^{\mathscr{M}} := {1\over n!}\frac{1}{2^{\frac{\beta}{2} n(n-1)}}\bigg({2\pi^{\beta/2}\over\Gamma({\beta\over 2})}\bigg)^{-n}\,{\prod\limits_{k=1}^n \bigg({2(2\pi)^{\beta k/2}\over 2^{\beta/2}\Gamma({\beta k\over 2})}\bigg)^2}.
$$
Again, we derive from this an \enquote{ordered version} of the Weyl integration formula to shift the necessity for permutations to the end of the proof, for which we additionally apply a useful change of variable: 
\begin{eqnarray}\label{eq:WeylIntegration2Ordered}
\nonumber &&\int_{\mathscr M_n(\mathbb{F}_\beta)} f(A) \, \vol_{\beta,n}(\dint A) \\
&&\qquad \qquad = n! \, c_{n,\beta}^{\mathscr{M}} \, 2^{-n} \int_{\R^n_+} f(s)\prod_{1\leq i<j\leq n}|s_i^2-s_j^2|^\beta \, \prod_{i=1}^n{s_i}^{\frac{\beta}{2}-1} \, {\bf 1}_{\{x\in\R^n_+:x_1\le\ldots \le x_n\}}(s^2) \,  \dint s^2.
\end{eqnarray}
As discussed in Section \ref{subsec:GeometryMatrix}, the vector $s(A):=(s_1(A), \ldots, s_n(A))$ of singular values of a matrix $A \in \mathscr M_n(\mathbb{F}_\beta)$ lives in the non-negative orthant $\B^n_{p,+}$ of the $\ell_p^n$-ball $\B^n_p$. Furthermore, the matrix $p$-ball $\B_{p,\beta}^{n,{\mathscr{M}}}$ will be represented in Euclidean space via $\B^n_{p/2, +}$, not $\B^n_{p, +}$, due to the structure of the Weyl integration formula for singular values in \eqref{eq:WeylIntegration2Ordered}. Since it uses the squares of the singular values in its repulsion factor, we adapt our representation appropriately, such that the same arguments as for the eigenvalues are applicable. Thus, we reformulate the defining condition of $\B_{p,\beta}^{n,{\mathscr{M}}}$ from $\sum_{i=1}^n|s_i(A)|^p \le 1$ to  $\sum_{i=1}^n|s_i^2(A)|^{p/2} \le 1$, and apply the same arguments as before to the vector $s^2(A):=(s_1^2(A), \ldots, s_n^2(A))$, which then in turn lies in $\B^n_{p/2,+}$. %
As in the self-adjoint setting, we need to define some functions and normalization terms to formulate the next result. For $x \in \R^n_+$ we set 
$$
\nabla_\beta(x) := \prod_{1\leq i<j\leq n}|x_i-x_j|^\beta\prod_{i=1}^nx_i^{{\beta\over 2}-1},
$$
which again is the repulsion factor of singular values from the Weyl integration formula \eqref{eq:WeylIntegration2}, and define $C_{n,p,\nabla_\beta, +}$ to be the normalization constant such that
\begin{equation*} \label{eq:NormConstSV1}
C_{n,p,\nabla_\beta}\int_{\R^n_+} \nabla_\beta(x)\,e^{-\|x\|_p^p} \,\dint x = 1.
\end{equation*}
Based on this definition, we further set $\nabla^c_\beta(x) := C_{\nabla_\beta}\,\nabla_\beta(x)$ for $x \in \R^n_+$ with 
$$C_{\nabla_\beta} := {c_{n,\beta}^{\mathscr{M}} \over \vol_{\beta,n}\big(\B_{p,\beta}^{n, \mathscr{M}} \big) \,C_{n,p/2,\nabla_\beta} \,\Gamma\Big({n+m\over p/2} +1\Big) 2^n },$$ 
where $m=\frac{\beta }{2}n^2-n$ is the degree of homogeneity of $\nabla_\beta^c(x)$. A final normalization constant $C_{n,p,\nabla_\beta^c, +}$ is defined by
\begin{equation*} \label{eq:NormConstSV2}
C_{n,p,\nabla_\beta^c}\int_{\R^n_+} \nabla_\beta^c(x)\,e^{-\|x\|_p^p} \,\dint x = 1.
\end{equation*}
\begin{thm}\label{thm:SvDistr}
Let $0<p<\infty$, $\beta\in\{1,2,4\}$ and $\bW$ be a Borel probability measure on $[0,\infty)$. Let $W$ be a real random variable with density $\bW$ and, independently of $W$, $X$ be a random vector with distribution given by the density $C_{n,{p / 2},\nabla^c_\beta, +} \, e^{-\|x\|_{p / 2}^{p / 2}} \, \nabla^c_\beta(x)$, $x\in\R^n_{+}$,  with respect to the Lebesgue measure. %
%
%
Let $\sigma$ be a uniform random permutation in the symmetric group on $n$ elements and $Z$ be a random matrix with distribution $\bP_{n,p,\bW,\beta}^{\mathscr{M}} :=\bW(\{0\})\bC_{n,p,\beta}^{\mathscr{M}}+\Psi^{\mathscr{M}}\bU_{n,p,\beta}^{\mathscr{M}}$ on $\B_{p,\beta}^{n,{\mathscr{M}}}$, where $\Psi^{\mathscr{M}}(A) := \Psi_{\nabla^c_\beta}(s(A)) = \psi_{\nabla^c_\beta}(\|s(A)\|_{p})$ for $A\in \B_{p,\beta}^{n,\mathscr{M}}$, and $\psi_{\nabla^c_\beta}$ is defined as in Theorem \ref{thm:ProbablisticRepresentation} for $f=\nabla^c_\beta$. Then
$$
s^2_\sigma(Z):=\big(s_{\sigma(1)}^2(Z),\ldots,s_{\sigma(n)}^2(Z)\big)\qquad\text{and}\qquad{X\over(\|X\|_{p/2}^{p/2}+W)^{2/p}}
$$
are identically distributed with distribution $\bP_{n,{p/2},\bW,\nabla^c_\beta, +} = \bW(\{0\})\bC_{n, {p/ 2},\nabla^c_\beta, +}+\Psi_{\nabla^c_\beta}\bU_{n, {p/ 2},\nabla^c_\beta, +}$ on $\B_{p/2, +}^n$. 
\end{thm}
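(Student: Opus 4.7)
The plan is to mimic the proof of Theorem \ref{thm:EvDistr} line by line, but replacing the Weyl integration formula \eqref{eq:WeylIntegrationOrdered} for self-adjoint matrices with its non-self-adjoint counterpart \eqref{eq:WeylIntegration2Ordered}, which natively encodes the change of variables $s \mapsto s^2$ in its repulsion factor. Because the defining condition of $\B_{p,\beta}^{n,\mathscr M}$ reads $\sum_i |s_i^2(A)|^{p/2}\le 1$, this change of variables turns the matrix $p$-ball into the non-negative Euclidean ball $\B^n_{p/2,+}$, so we will end up applying the polar integration and weighted-measure machinery of Section \ref{sec:Weighted} with parameters $p/2$ and $f=\nabla_\beta^c$.

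First, I would fix an arbitrary non-negative measurable function $h:\R^n_+\to\R$ and lift it to $\tilde h:\mathscr M_n(\F_\beta)\to\R$ by $\tilde h(A):=h(s^2(A))$. Expanding $\E\tilde h(Z)$ via the defining mixture $\bP_{n,p,\bW,\beta}^{\mathscr M}=\bW(\{0\})\bC_{n,p,\beta}^{\mathscr M}+\Psi^{\mathscr M}\bU_{n,p,\beta}^{\mathscr M}$, and using the polar integration formula from Lemma \ref{lem:PolarIngerationAllg} for $K=\B_{p,\beta}^{n,\mathscr M}$ (which applies by Remark \ref{rmk:starsharped}) to convert the cone-measure integral into a $\vol_{\beta,n}$-integral over $\B_{p,\beta}^{n,\mathscr M}$ of the radial extension of $h\circ s^2$, yields two integrals over $\B_{p,\beta}^{n,\mathscr M}$ of matrix functions depending only on $s(A)$.

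Next, I would apply the ordered Weyl integration formula \eqref{eq:WeylIntegration2Ordered} to both integrals. This expresses them as integrals over the ordered simplex in $\B^n_{p/2,+}$ against $\nabla_\beta(s^2)=\prod_{i<j}|s_i^2-s_j^2|^\beta\prod_i s_i^{\beta-2}$ times $\dint s^2$. Using the $m$-homogeneity of $\nabla_\beta$ with $m=\tfrac{\beta}{2}n^2-n$, I factor $\nabla_\beta(x/\|x\|_{p/2})=\|x\|_{p/2}^{-m}\nabla_\beta(x)$ in the cone-measure term and then apply the non-negative polar integration formula of Corollary \ref{cor:PolarIngerationPositive} with $K=\B^n_{p/2,+}$ to each piece. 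The definitions of $\bU_{n,p/2,\nabla_\beta,+}$, $\bC_{n,p/2,\nabla_\beta,+}$ together with the normalization $\nabla_\beta^c=C_{\nabla_\beta}\nabla_\beta$ --- whose constant $C_{\nabla_\beta}$ is designed precisely so that the prefactor $n!\,c_{n,\beta}^{\mathscr M}\,2^{-n}/\vol_{\beta,n}(\B_{p,\beta}^{n,\mathscr M})$ collapses against $C_{n,p/2,\nabla_\beta}\,\Gamma(\tfrac{n+m}{p/2}+1)$ --- will swallow all the constants and leave a clean mixture of $\bC_{n,p/2,\nabla_\beta^c,+}$ and $\Psi_{\nabla_\beta^c}\bU_{n,p/2,\nabla_\beta^c,+}$ restricted to the ordered simplex, with the only remaining combinatorial factor being $n!$.

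Finally, composing with the independent uniform random permutation $\sigma$ removes the ordering indicator and divides by $n!$, producing exactly $\bW(\{0\})\bC_{n,p/2,\nabla_\beta^c,+}+\Psi_{\nabla_\beta^c}\bU_{n,p/2,\nabla_\beta^c,+}=\bP_{n,p/2,\bW,\nabla_\beta^c,+}$. By Theorem \ref{thm:ProbablisticRepresentation}, applied with parameter $p/2$ in place of $p$ and $f=\nabla_\beta^c\in\mathscr F_m^+(\R^n_+)$, this is precisely the distribution of $X/(\|X\|_{p/2}^{p/2}+W)^{2/p}$, proving the theorem. The main obstacle is bookkeeping: one has to verify that $\nabla_\beta$ really has degree of homogeneity $m=\tfrac{\beta}{2}n^2-n$ (so that both the $p$-radial density $\psi_{\nabla_\beta^c}$ and the $\Psi^{\mathscr M}$ prescribed in \eqref{eq:PnpwNSA} coincide), that the factor $2^{-n}$ appearing in \eqref{eq:WeylIntegration2Ordered} matches the $2^n$ placed into the definition of $C_{\nabla_\beta}$, and that the parameter $p$ is systematically replaced by $p/2$ when invoking Theorem \ref{thm:ProbablisticRepresentation}; once these constants are tracked, the argument is a direct transposition of the self-adjoint case.
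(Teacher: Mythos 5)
Your proposal is correct and follows exactly the route the paper indicates: transpose the proof of Theorem \ref{thm:EvDistr} to the non-self-adjoint setting using the ordered Weyl integration formula \eqref{eq:WeylIntegration2Ordered}, the square change of variables $s\mapsto s^2$ (which turns $\B_{p,\beta}^{n,\mathscr M}$ into $\B_{p/2,+}^n$), the non-negative polar integration formula, and Theorem \ref{thm:ProbablisticRepresentation} with $p/2$ and $f=\nabla_\beta^c$. Your bookkeeping of the degree $m=\tfrac{\beta}{2}n^2-n$, the $2^{\pm n}$ cancellation, and the systematic $p\mapsto p/2$ replacement is all correct and matches the paper's one-paragraph description of its own proof.
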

The proof of this goes along the very same lines as that of Theorem \ref{thm:EvDistr}, just using representation results from Theorem \ref{thm:ProbablisticRepresentation} in the non-negative setting and the Weyl integration formula from \eqref{eq:WeylIntegration2Ordered} instead of \eqref{eq:WeylIntegrationOrdered} in conjunction with an appropriate change of variables regarding the square variable, resulting in the repulsion factor $\nabla^c_\beta$.
\begin{rmk}\label{rmk:NormDistrNSA}\rm 
For $\nabla^c_\beta$ the degree of homogeneity is $m=\frac{\beta }{2}n^2-n$. Thus, if $\bW=\bG(\alpha,1)$ for $\alpha>0$,  analogue arguments as in Remark \ref{rmk:NormPropRepExpGamma} for a random vector $X$ distributed on $\R^n_+$ as in Theorem \ref{thm:SvDistr} yield that
$$
{\|X\|_{p/2}^{p/2}\over \|X\|_{p/2}^{p/2}+W} \sim \bB\Big({\beta\over p} n^2, \alpha \Big),
$$
and for $\bW = \vartheta\delta_0 + (1-\vartheta)\,\textbf{G}(\alpha,1)$, where $\vartheta \in [0,1]$ and $\alpha \in (0,\infty)$, we have by the arguments from Proposition \ref{prop:DistributionNormB} that
$$
{\|X\|_{p/2}^{p/2}\over \|X\|_{p/2}^{p/2}+W}  \sim \vartheta\delta_1 + (1-\vartheta)\bB\Big({\beta\over p} n^2, \alpha \Big).
$$
\end{rmk}

\section{Application to large deviations:  Euclidean $\ell_p^n$-balls}\label{sec:ApplicationEucl}

\subsection{LDPs for the empirical measure of random vectors in $\B^n_p$} \label{subsec:LDPGeometric}

In \cite{KimRamanan} an LDP was derived for the empirical measure of the (suitably scaled) coordinates of a random vector that is distributed according to the cone probability measure on $\B^n_p$. In this section, we prove a similar large deviation principle with the random vectors chosen according to one of the more general distributions $\bP_{n,p,\bW}$. We restrict ourselves to the following situation: for each $n\in\N$ we consider $\bW_n:=\vartheta_n\delta_0 +(1-\vartheta_n)\textbf{G}(\alpha_n, 1)$ with $\vartheta_n\in[0,1]$ and $\alpha_n\geq 0$. This way, we are specific enough to compute a concrete rate function, yet broad enough to still encapsulate many interesting distributions for the corresponding $\bP_{n,p,\bW_n}$. As we will see, the large deviation behavior of the empirical measure will be dependent both on the limits $\lim_{n\to\infty} \vartheta_n =: \vartheta \in [0,1]$ and $\lim_{n\to\infty} \alpha_n/n =: \alpha \in [0,\infty)$ of the parameter sequences and their speed of convergence, and thus will be universal to all distributions who have the same parameter limits and parameter convergence speeds. We shall appropriately write $\Psi_{f,n}$ for the $p$-radial density associated with $\bW_n$ as defined in Theorem \ref{thm:ProbablisticRepresentation}. (However, the weighting function will not be needed in this section, i.e., can be set to $f \equiv 1$). %
For a probability measure $\mu \in \mathcal{M}(\R)$ we will denote by $$m_p(\mu):=\int_{\R}|x|^p\,\mu(\dint x) \in [0,\infty]$$ its $p$-th absolute moment if $p\in (0,\infty)$. We also define the relative entropy as
$$
H(\nu\|\mu) := 
\begin{cases}
\displaystyle \int_\R \log\frac{\nu(\dint x)}{\mu(\dint x)} \, \nu(\dint x) &:\nu\ll\mu\\
+\infty &: otherwise,
\end{cases}
$$
for $\mu, \nu \in \mathcal{M}(\R)$, where $\frac{\nu(\dint x)}{\mu(\dint x)}$ denotes the Radon-Nikod\'ym derivative of $\nu$ with respect to $\mu$. 
Finally, for a random vector $Z:=(Z_1, \ldots, Z_n) $ in $\R^n$ the empirical measure of its coordinates is defined as $\nu_n:={1\over n}\sum_{i=1}^n\delta_{Z_i}$. In the following result, the random vector $Z$ will have distribution $\bC_{n,p}$ on $\B^n_p$, thus we will consider the empirical measure of the coordinates scaled by the factor $n^{1 / p}$, i.e., $\mu_n:={1\over n}\sum_{i=1}^n\delta_{n^{1/p}Z_i}$. %
%
 %
 The scaling is necessary to receive non-trivial results and can be derived by the following reasoning. Since the defining condition of $\SSS^{n-1}_p$ restricts the $n$-fold sum of $p$-th powers of the coordinates of a random vector to be equal to one it follows that the typical coordinate of that vector must be of order $n^{-1/p}$, which the rescaling counteracts (see \cite[Proposition 2.2]{KimRamanan}).
 This will be the case for all other distributions on $\ell_p^n$-balls as well, as they all have $p$-radial components that are less or equal to that of the cone probability measure. We will often just call $\mu_n$ the empirical measure of a random vector $Z$. As mentioned in the introduction, Rachev and Rüschendorf \cite{RachevRueschendorf} showed that the (one dimensional) marginal distributions of $\bC_{n,p}$ asymptotically are generalized Gaussian distributions $\bN_p$  with expectation $0$, rate $1$ and shape $p$, thus the expectation of the $\mu_n$ is $\bN_p$. In \cite[Proposition 3.6]{KimRamanan} Kim and Ramanan derived the following Sanov-type LDP for the empirical measure of a random vector in $\B^n_p$ with distribution $\bC_{n,p}$.
\begin{proposition}\label{prop:LDPKimRamanan}
Let $0<p < \infty$ and let $(Z^{(n)})_{n\in\N}$ be a sequence of random vectors $Z^{(n)}=\big(Z^{(n)}_1, \ldots, Z_n^{(n)})$ in $\B^n_p$ with distribution $\bC_{n,p}$. Then the sequence of random probability measures $(\mu_n)_{n\in\N}$ with  $\mu_n:={1\over n}\sum\limits_{i=1}^n\delta_{n^{1/p}Z^{(n)}_i}$ satisfies a large deviation principle on $\mathcal{M}(\R)$ with speed $n$ and good rate function
$$
\mathcal{I}_{\textup{cone}}(\mu) = \begin{cases}
\displaystyle H(\mu\|\bN_p) + (1 -m_p(\mu)) &: m_p(\mu)\le1\\
+\infty &: otherwise,
\end{cases}
$$
where $\bN_p$ is  the generalized Gaussian measure with expectation $0$, rate $1$ and shape $p$.
\end{proposition}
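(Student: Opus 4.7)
The plan is to follow Kim and Ramanan's original argument, which decomposes $\mu_n$ via the Schechtman--Zinn probabilistic representation of $\bC_{n,p}$ and then combines classical Sanov-type large deviations with Cram\'er's theorem on the real line.

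First I would invoke the Schechtman--Zinn representation: if $Y_1,\ldots,Y_n$ are i.i.d.\ $\bN_p$-distributed and $Y=(Y_1,\ldots,Y_n)$, then $Y/\|Y\|_p \stackrel{d}{=} Z^{(n)}\sim \bC_{n,p}$, and moreover $Y/\|Y\|_p$ is independent of $\|Y\|_p$. Setting $L_n:=\frac{1}{n}\sum_{i=1}^n\delta_{Y_i}$ and $S_n:=\frac{1}{n}\sum_{i=1}^n|Y_i|^p = m_p(L_n)$, this yields
$$
\mu_n \stackrel{d}{=} \frac{1}{n}\sum_{i=1}^n \delta_{Y_i/S_n^{1/p}},
$$
so $\mu_n$ is the pushforward of $L_n$ under the scaling map $y\mapsto y/S_n^{1/p}$. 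The LDP problem for $\mu_n$ is thereby reduced to an LDP problem for the pair $(L_n,S_n)$ together with a contraction.

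Next, classical Sanov provides an LDP for $L_n$ on $\mathcal{M}(\R)$ (weak topology) at speed $n$ with good rate function $H(\,\cdot\,\|\bN_p)$, and Cram\'er's theorem gives an LDP for $S_n$ on $[0,\infty)$ at the same speed with rate function $\Lambda^*_{|Y|^p}$ (note that $|Y_1|^p\sim \bG(1/p,1)$, whose cumulant generating function is explicitly computable). The key technical step is to promote these to a joint LDP for $(L_n,S_n)$ and identify its rate function. Once the joint LDP has been established, the contraction principle applied to the continuous map $(\nu,s)\mapsto T^{(1/s^{1/p})}\nu$, where $T^{(a)}$ denotes pushforward under $y\mapsto ay$, delivers an LDP for $\mu_n$. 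Writing $\nu=T^{(s^{1/p})}\mu$ in terms of the target $\mu$, a direct change-of-variables computation gives
$$
H\bigl(T^{(s^{1/p})}\mu \,\bigl\Vert\, \bN_p\bigr) \;=\; H(\mu\|\bN_p)-\tfrac{1}{p}\log s + (s-1)\,m_p(\mu),
$$
and optimizing in $s$ subject to the constraint $s\geq m_p\bigl(T^{(s^{1/p})}\mu\bigr) = s\,m_p(\mu)$ (which forces $m_p(\mu)\leq 1$) should, after simplification, collapse to the advertised rate function $H(\mu\|\bN_p)+(1-m_p(\mu))$ on $\{m_p(\mu)\leq 1\}$ and $+\infty$ elsewhere.

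The main obstacle is the joint LDP for $(L_n,S_n)$: because $m_p$ is only lower semi-continuous, and not continuous, in the weak topology on $\mathcal{M}(\R)$, one cannot simply read off the joint rate from Sanov's theorem. Resolving this requires exploiting the exponential integrability $\int e^{t|x|^p}\,\bN_p(\dint x)<\infty$ for $t<1$, either by working in the stronger $p$-Wasserstein topology, in which a strengthened Sanov theorem holds and $m_p$ is continuous, or by a truncation/exponential-tilting argument that approximates $|x|^p$ by $|x|^p\wedge M^p$, controls the contribution of ``mass escaping to infinity,'' and passes to the limit $M\to\infty$. It is precisely this second contribution that accounts for the admissibility of $m_p(\mu)<1$ and produces the additive $(1-m_p(\mu))$ correction; everything else in the argument is bookkeeping via Sanov, Cram\'er, and the contraction principle.
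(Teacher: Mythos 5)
The paper does not prove this proposition: it is quoted from Kim and Ramanan, with the reference measure renormalized from $\bN_{\textup{gen}}(0,p^{1/p},p)$ to $\bN_p=\bN_{\textup{gen}}(0,1,p)$. So there is no ``paper proof'' to measure your sketch against. The decomposition $\mu_n \overset{d}{=} T^{(1/S_n^{1/p})}L_n$ via Schechtman--Zinn, followed by Sanov, Cram\'er and contraction, is indeed the skeleton of the Kim--Ramanan argument, and your change-of-variables identity $H(T^{(s^{1/p})}\mu\|\bN_p)=H(\mu\|\bN_p)-\tfrac1p\log s+(s-1)m_p(\mu)$ is correct.

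Two genuine gaps remain. The first is the one you flag --- the LDP for the pair $(L_n,m_p(L_n))$. Since $S_n=m_p(L_n)$ is a deterministic functional of $L_n$, this is not a question of combining two LDPs but of computing the rate function of a single sequence in the weak topology, and your $p$-Wasserstein suggestion does not shortcut this. In the $p$-Wasserstein topology $m_p$ is continuous and $m_p(\mu_n)\equiv 1$ identically (since $\tfrac1n\sum_i|n^{1/p}Z_i^{(n)}|^p=1$), so the contraction gives a rate function that is $+\infty$ off $\{m_p(\mu)=1\}$ --- \emph{none} of the finite values on $\{m_p(\mu)<1\}$ that the proposition is about are produced. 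To land on the weak-topology LDP one must compute the lower semicontinuous envelope of this Wasserstein rate, and that envelope computation \emph{is} the escaping-mass analysis you were hoping to sidestep: the Wasserstein route relabels the difficulty rather than removing it. The truncation/tilting route you mention is plausible but left entirely unsketched, so the crux of the argument is still missing.

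The second gap is that the final infimum over $s$ is asserted, not carried out, and it does not yield the displayed formula. With the natural escaped-mass penalty $s-m_p(\nu)$ (for a density $\propto e^{-|x|^p}$, pushing $\delta$ units of $p$-moment to infinity costs exponential rate $\delta$), the infimand at $\nu=T^{(s^{1/p})}\mu$ is $H(\mu\|\bN_p)-m_p(\mu)+s-\tfrac1p\log s$, whose infimum over $s>0$ is $H(\mu\|\bN_p)-m_p(\mu)+\tfrac1p(1+\log p)$, not $H(\mu\|\bN_p)+1-m_p(\mu)$; these agree only for $p=1$. Consistently, the displayed formula does not vanish at the almost-sure weak limit $\bN_{\textup{gen}}(0,p^{1/p},p)$ of $\mu_n$ for $p\neq 1$, since $H(\bN_{\textup{gen}}(0,p^{1/p},p)\|\bN_p)=1-\tfrac1p(1+\log p)>0$. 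So before asserting that the optimization ``collapses'' to the stated rate function, you need the escaped-mass penalty in hand, and you need to track the normalization-dependent additive constant explicitly; as it stands the claim is both unverified and, on the face of it, off by a $p$-dependent constant.
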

\begin{rmk}\text{}
\begin{itemize}
\item[(i)] We remark that the original version of this result in \cite[Proposition 3.6]{KimRamanan} was only formulated for $p\in [1,\infty]$, but can be expanded to $p \in (0, \infty]$, as all the probabilistic representations used in the proof also hold for $p\in (0,1)$, and neither the convexity of $\B^n_p$ nor the norm-property of $\| \cdot \|_p$ was used in the proof. We exclude the case $p=\infty$ in this paper though, hence we only present results for $p\in(0,\infty)$.
\item[(ii)] As we already mentioned in Remark \ref{rmk:DifferentNormalizations}, the scale of the generalized Gaussian in \cite{KimRamanan} is $p^{1/p}$ instead of $1$. The rate function in Proposition \ref{prop:LDPKimRamanan} had to be adjusted to compensate for the different parametrization.
\item[(iii)] The above Sanov-type LDP for Euclidean $\ell_p^n$-balls of Kim and Ramanan \cite{KimRamanan} has been recently generalized to a Sanov-type LDP for Orlicz-balls by Frühwirth and Prochno in \cite{FProchnoSanovOrlicz}. Despite being proven differently, due to the lack of Schechtman-Zinn type probabilistic representations, their results still exhibit a similarity to those in \cite{KimRamanan} with the rate function of the LDP being given by a relative entropy term and a generalization of the moment penalty.
\end{itemize}
\end{rmk}
We now extend Proposition \ref{prop:LDPKimRamanan} to random vectors with distribution $\bP_{n,p,\bW_n}$ on $\B^n_p$. It will turn out, that the rate function will again be the relative entropy, this time perturbed by some more elaborate $p$-th moment penalty. %
\begin{thm}\label{thm:LDPEmpMeasure}
Let $0<p<\infty$ and let $(\vartheta_n)_{n\in\N}$ be a sequence in $[0,1]$ with $\lim\limits_{n\to\infty} \vartheta_n = \vartheta \in [0,1]$ and denote by $k(\vartheta) \ge 1$ the smallest number such that $\lim\limits_{n\to\infty} n^{-k(\vartheta)} \, |\log(1-\vartheta_n)| < +\infty$. Also let $(\alpha_n)_{n\in\N}$ be a positive, real sequence such that $\lim\limits_{n\to\infty}\alpha_n \, n^{-1}=\alpha \in [0,\infty)$. For each $n\in\N$ let $\bW_n=\vartheta_n\delta_0 +(1-\vartheta_n)\textbf{G}(\alpha_n, 1)$, and let $(Z^{(n)})_{n\in\N}$ be a sequence of random vectors $Z^{(n)}=\big(Z^{(n)}_1, \ldots, Z^{(n)}_n\big)$ in $\B^n_p$ chosen according to the distribution $\bP_{n,p,\bW_n}$. Then the sequence of random probability measures $(\mu_n)_{n\in\N}$ with  $\mu_n:={1\over n}\sum\limits_{i=1}^n\delta_{n^{1/p}Z^{(n)}_i}$ satisfies a large deviation principle on $\mathcal{M}(\R)$ with speed $n$ and good rate function
\begin{align*}
\mathcal{I}_{\textup{emp}}(\mu) = \begin{cases}
\displaystyle \mathcal{I}_{\textup{cone}}(\mu) - c_{(1-\vartheta)}&: \parbox{5cm}{$m_p(\mu)\le1, k(\vartheta) \ge 1, \alpha =0$}\\
\parbox{9cm}{$ \displaystyle  \mathcal{I}_{\rm cone}(\mu) +{1\over p}\log\Big({1\over p}\Big) - \Big({1\over p} + \alpha\Big) \log \Big({1\over p} + \alpha\Big) \vphantom{\int\limits_0^1}$\\$- \alpha \log \Big(\frac{1 -m_p(\mu)}{\alpha}\Big) - c_{(1-\vartheta)} \vphantom{\int\limits_0}$} &: \parbox{5cm}{$m_p(\mu)<1, k(\vartheta)=1,  \alpha>0$}\\
+\infty &: otherwise,
\end{cases}
\end{align*}
where $\mathcal{I}_{\textup{cone}}$ is the same as in Proposition \ref{prop:LDPKimRamanan} and 
$$ 
c_{(1-\vartheta)} := \begin{cases} 
\lim\limits_{n\to\infty} n^{-1} \, \log(1-\vartheta_n) \vphantom{\int\limits_{0}}&: k(\vartheta) = 1\\
0 &: k(\vartheta) > 1.
\end{cases}
$$
\end{thm}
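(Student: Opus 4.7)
The plan is to exploit the probabilistic representation from Proposition \ref{prop:ProbRepMixedDistributionConeUnif} (applied with $f\equiv 1$) to reduce the LDP to the cone-measure case of Proposition \ref{prop:LDPKimRamanan}, combined with an independent scalar LDP. Let $X^{(n)}=(X_1,\ldots,X_n)$ have i.i.d. $\bN_p$ entries and let $W_n\sim \bW_n$ be independent of $X^{(n)}$. Set $Y^{(n)}:=X^{(n)}/\|X^{(n)}\|_p$ and $B_n:=\|Z^{(n)}\|_p^p$. Lemma \ref{lem:DistributionConeMeasure}(i) gives that $Y^{(n)}\sim \bC_{n,p}$ is independent of $\|X^{(n)}\|_p$ and hence of $B_n$, while Proposition \ref{prop:DistributionNormB} (with $m=0$) yields $B_n\sim \vartheta_n\delta_1+(1-\vartheta_n)\bB(n/p,\alpha_n)$. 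With $h_b(x):=b^{1/p}x$, the representation then furnishes the factorization
\[
\mu_n \;=\; (h_{B_n})_{*}\nu_n,\qquad \nu_n \;:=\; \tfrac{1}{n}\sum_{i=1}^n \delta_{n^{1/p}Y_i^{(n)}},
\]
with $\nu_n$ and $B_n$ independent and $\nu_n$ distributed exactly as in the setting of Proposition \ref{prop:LDPKimRamanan}.

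Next I would set up two LDPs at speed $n$. For $\nu_n$, Proposition \ref{prop:LDPKimRamanan} directly supplies the good rate function $\mathcal{I}_{\textup{cone}}$. For $B_n$, I would analyse the cumulant generating function of the beta component asymptotically via Stirling's formula and the Laplace-type tools of Section \ref{subsec:Laplace} (Propositions \ref{prop:LaplacePrinc} and \ref{prop:Breitung}, together with the mixture-sensitive refinements \eqref{eq:ExpLaplace2} and \eqref{eq:ExpLaplaceBreitung}, which absorb the prefactors $\vartheta_n$ and $1-\vartheta_n$ --- this is precisely what forces the case distinction on $k(\vartheta)$ and introduces the constant $c_{(1-\vartheta)}$). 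Gärtner-Ellis (Proposition \ref{prop:GaertnerEllis}) then produces a good rate function $\mathcal{I}_B$ satisfying $\mathcal{I}_B(1)=0$ and, for $b\in(0,1)$, equal to $-c_{(1-\vartheta)}+\mathcal{I}_V(b)$ when $k(\vartheta)=1$ (and $+\infty$ otherwise), where $\mathcal{I}_V(b)=-\tfrac{1}{p}\log b$ for $\alpha=0$ and
\[
\mathcal{I}_V(b)\;=\;-\tfrac{1}{p}\log b-\alpha\log(1-b)+\tfrac{1}{p}\log\tfrac{1}{p}+\alpha\log\alpha-\bigl(\tfrac{1}{p}+\alpha\bigr)\log\bigl(\tfrac{1}{p}+\alpha\bigr)
\]
for $\alpha>0$ arises as the Legendre transform of the limiting beta cumulant. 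Independence of $\nu_n$ and $B_n$ then enables the product LDP (Proposition \ref{prop:ProdLDP}) to yield a joint LDP for $(\nu_n,B_n)$ at speed $n$ with good rate function $\mathcal{I}_{\textup{cone}}(\nu)+\mathcal{I}_B(b)$.

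The map $\Phi(\nu,b):=(h_b)_{*}\nu$ from $\mathcal{M}(\R)\times[0,1]$ into $\mathcal{M}(\R)$ is continuous (with continuity at $b=0$ obtained via a dominated-convergence argument against bounded continuous test functions and the tightness of weakly convergent sequences), so the contraction principle (Proposition \ref{prop:ContrPrinc}) transports the joint LDP to $\mu_n$ with rate
\[
\mathcal{I}_{\textup{emp}}(\mu) \;=\; \inf_{b\in[0,1]}\,\inf_{\nu:(h_b)_{*}\nu=\mu}\bigl[\mathcal{I}_{\textup{cone}}(\nu)+\mathcal{I}_B(b)\bigr].
\]
For $b>0$ the inner infimum is attained at the unique preimage $\nu=(h_{b^{-1}})_{*}\mu$, and a direct change of variables against the density of $\bN_p$ gives
\[
H\bigl((h_{b^{-1}})_{*}\mu\bigm\|\bN_p\bigr)\;=\;H(\mu\|\bN_p)+m_p(\mu)\bigl(\tfrac{1}{b}-1\bigr)+\tfrac{1}{p}\log b
\]
together with $m_p((h_{b^{-1}})_{*}\mu)=m_p(\mu)/b$. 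Substituting these into $\mathcal{I}_{\textup{cone}}(\nu)$ produces a crucial cancellation of the $1/b$ terms and leaves
\[
\mathcal{I}_{\textup{cone}}\bigl((h_{b^{-1}})_{*}\mu\bigr)\;=\;H(\mu\|\bN_p)+1-m_p(\mu)+\tfrac{1}{p}\log b
\]
on the feasibility range $b\in[m_p(\mu),1]$ forced by $m_p(\nu)\le 1$. The remaining scalar minimization of $\tfrac{1}{p}\log b+\mathcal{I}_B(b)$ is elementary: for $\alpha=0$ the $\tfrac{1}{p}\log b$ contributions cancel and the infimum equals the constant $-c_{(1-\vartheta)}$; for $\alpha>0$ with $k(\vartheta)=1$ the $b$-dependence reduces to the monotone term $-\alpha\log(1-b)$, whose minimizer on $[m_p(\mu),1)$ is $b=m_p(\mu)$, producing exactly the stated formula. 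I anticipate that the main obstacle will be the LDP for $B_n$ in the regime $\vartheta_n\to 1$, where the atom at $1$ and the beta bulk decay on distinct exponential scales and must be combined carefully --- this is precisely the situation for which the refined Laplace estimates \eqref{eq:ExpLaplace2}/\eqref{eq:ExpLaplaceBreitung} in Section \ref{subsec:Laplace} were tailored.
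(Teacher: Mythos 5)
Your proposal is correct and follows essentially the same route as the paper's own proof: the probabilistic representation via Propositions~\ref{prop:ProbRepMixedDistributionConeUnif}/\ref{prop:DistributionNormB} and Lemma~\ref{lem:DistributionConeMeasure}, the factorization $\mu_n=(h_{B_n})_*\nu_n$ with $\nu_n$ and $B_n$ independent, Proposition~\ref{prop:LDPKimRamanan} for the directional part, a G\"artner--Ellis argument (using the Laplace-type asymptotics of Section~\ref{subsec:Laplace} and Stirling) for $B_n$ whose rate coincides with the paper's $\mathcal{I}_{\rm beta}$ from Lemma~\ref{lem:LDPBetaExtended-Revised}, then Propositions~\ref{prop:ProdLDP} and~\ref{prop:ContrPrinc} and the same scalar optimization with the same cancellations. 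The one cosmetic difference is your treatment of the endpoint $b=0$: you argue continuity of $\Phi$ there via tightness and dominated convergence, whereas the paper simply restricts the contraction map to $\mathcal{M}(\R)\times(0,1]$ after observing $\mathcal{I}_{\rm beta}(0)=+\infty$; both are fine and lead to the same infimum.
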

\begin{rmk} \text{}
\begin{enumerate}
\item[(i)] The term $c_{(1-\vartheta)}$ serves as a correction term that is only positive, if $\vartheta_n$ tends to $1$ in such a way that both $n^{-1}\log(1-\vartheta_n)$ and $(\alpha_n n^{-1})_{n\in\N}$ share the same speed of convergence. For $\vartheta \in [0,1)$ we always have $k(\vartheta)=1$ and  $c_{(1-\vartheta)} = \lim\limits_{n\to\infty} n^{-1} \, \log(1-\vartheta_n) =0$ and the rate function simplifies accordingly. For $k(\vartheta)>1$ (which implies that $\vartheta_n$ tends to  $\vartheta = 1$ faster than $\alpha_n n^{-1}$ tends to $\alpha$), the term $c_{(1-\vartheta)}$ also vanishes and the sequence $(\mu_n)_{n\in\N}$ from Theorem \ref{thm:LDPEmpMeasure} based on $\bP_{n,p,\bW_n}$ shares its rate function with that from Proposition \ref{prop:LDPKimRamanan} for the cone measure $\bC_{n,p}$. Any convergence speeds slower than $k(\vartheta)=1$ would only yield trivial results, as the resulting LDP for the $p$-radial component of our probabilistic representation (see Lemma \ref{lem:LDPBetaExtended-Revised}) would have a speed slower than the LDP of the directional component (see Proposition \ref{prop:LDPKimRamanan}). However, overall we see that for many parameter sequences $(\vartheta_n)_{n \in \N}$, i.e., for many distributions $\bP_{n,p,\bW_n}$, the rate functions of the corresponding LDPs are universal. 
\item[(ii)]  We need to consider $k(\vartheta)$ such that $\lim_{n\to\infty} n^{-k(\vartheta)} \, |\log(1-\vartheta_n)| < \infty$ in order to analyze the interplay between the convex combination of measures in $\bW_n= \vartheta_n\delta_0 + (1-\vartheta_n)\textbf{G}(\alpha_n,1)$ and the parameter sequence $(\alpha_n)_{n \in \N}$ of the involved gamma distributions. 
	The value of $k(\vartheta)$ and the limiting behavior of  $n^{-k(\vartheta)} \, |\log(1-\vartheta_n)|$ determine if the convex combination in $\bW_n$ \enquote{drowns out} the involved gamma distributions $\textbf{G}(\alpha_n,1)$ faster than their parameter sequence $(\alpha_n)_{n \in \N}$ can grow and have an influence on the large deviation behavior.
\end{enumerate}
\end{rmk}
The strategy of the proof of Theorem \ref{thm:LDPEmpMeasure} will be the following: for a given random vector in $\B^n_p$ with distribution $\bP_{n,p,\bW_n}$ we apply the probabilistic representation from Proposition \ref{prop:Barthe} for the specific $\bW_n$. We split that representation into two components, one representing the direction and the other the $\ell_p^n$-norm of the random vector and derive LDPs for these components separately. However, the LDP for the directional component (which has distribution $\bC_{n,p}$) has been obtained in \cite{KimRamanan} (see Proposition \ref{prop:LDPKimRamanan}). So, only the LDP for the norm component has to be established. Applying the contraction principle will then conclude the proof. 
%
%
\begin{lemma}\label{lem:LDPBetaExtended-Revised}
	Let $0<p<\infty$ and let $(\vartheta_n)_{n\in\N}$ be a sequence in $[0,1]$ with $\lim\limits_{n\to\infty} \vartheta_n = \vartheta \in [0,1]$ and denote by $k(\vartheta) \ge 1$ the smallest number such that $\lim\limits_{n\to\infty} n^{-k(\vartheta)} \, |\log(1-\vartheta_n)| < +\infty$. Also let $(\alpha_n)_{n\in\N}$ be a positive, real sequence such that $\lim\limits_{n\to\infty}\alpha_n \, n^{-1}=\alpha \in [0,\infty)$. For each $n\in\N$ let $X^{(n)}=(X_1^{(n)},\ldots,X_n^{(n)})$ be a random vector with independent coordinates such that $X_i \sim {\bN}_p$. Independently of $(X^{(n)})_{n\in\N}$, let $(W^{(n)})_{n\in\N}$ be a sequence of random variables with $W^{(n)} \sim \bW_n= \vartheta_n\delta_0 + (1-\vartheta_n)\textbf{G}(\alpha_n,1)$. Then the sequence of random variables $(B^{(n)})_{n\in\N}$ with $B^{(n)}:={\|X^{(n)}\|_p^p\over\|X^{(n)}\|_p^p+W^{(n)}}$ satisfies a large deviation principle on $[0, 1]$ with speed $n$ and good rate function %
	$$
	\mathcal{I}_{\rm beta}(x) = \begin{cases}
		0 &: k(\vartheta) > 1,  x =1 \vphantom{\int\limits_{0}} \\
		-\frac{1}{p} \log(x) - c_{(1- \vartheta)}&: k(\vartheta) = 1,  \alpha =0, x \in (0,1] \vphantom{\int\limits_{0}} \\
	-\frac{1}{p} \log(xp) - \alpha \log\Big(\frac{1-x}{\alpha}\Big) - \Big(\frac{1}{p} + \alpha\Big) \log\Big(\frac{1}{p} + \alpha \Big) - c_{(1- \vartheta)} &:k(\vartheta) = 1,  \alpha >0,  x \in (0,1) \vphantom{\int\limits_{0}}\\
	+\infty&: otherwise,
	\end{cases}
	$$
	where 
	$$ 
	c_{(1-\vartheta)} := \begin{cases} 
	\lim\limits_{n\to\infty} n^{-1} \, \log(1-\vartheta_n) \vphantom{\int\limits_{0}}&: k(\vartheta) = 1\\
	0 &: k(\vartheta) > 1.
	\end{cases}
	$$
\end{lemma}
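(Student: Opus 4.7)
The strategy is to identify the distribution of $B^{(n)}$ explicitly via Theorem \ref{thm:ProbablisticRepresentation} (or rather its specialization in Proposition \ref{prop:DistributionNormB}) and then perform a direct density-based LDP analysis built on Stirling's formula together with the Laplace-type asymptotics collected in Section \ref{subsec:Laplace}.

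Applying Proposition \ref{prop:DistributionNormB} with the trivial weight $f \equiv 1$ (so the degree of homogeneity is $m = 0$) and the specific mixture $\bW_n = \vartheta_n\delta_0 + (1-\vartheta_n)\textbf{G}(\alpha_n,1)$ yields
\begin{equation*}
B^{(n)} \;\sim\; \vartheta_n\,\delta_1 \,+\, (1-\vartheta_n)\,\bB\Big(\tfrac{n}{p},\alpha_n\Big).
\end{equation*}
Thus $B^{(n)}$ has Lebesgue density $(1-\vartheta_n)\rho_n$ on $(0,1)$, where
\begin{equation*}
\rho_n(x) \;=\; \frac{x^{n/p-1}(1-x)^{\alpha_n-1}}{B(n/p,\alpha_n)},
\end{equation*}
together with an atom of mass $\vartheta_n$ at $x=1$. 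I would then apply Stirling's approximation to $B(n/p,\alpha_n) = \Gamma(n/p)\Gamma(\alpha_n)/\Gamma(n/p+\alpha_n)$: since the coefficients $\tfrac{1}{p}$, $\tfrac{\alpha_n}{n}$, and $\tfrac{1}{p}+\tfrac{\alpha_n}{n}$ of the divergent $\log n$ contributions of the three gamma factors sum to zero, they cancel exactly, leaving
\begin{equation*}
\lim_{n\to\infty}-\frac{1}{n}\log \rho_n(x) \;=\; -\tfrac{1}{p}\log(xp) \,-\, \alpha\log\tfrac{1-x}{\alpha} \,-\, \big(\tfrac{1}{p}+\alpha\big)\log\big(\tfrac{1}{p}+\alpha\big)
\end{equation*}
for $x\in(0,1)$ when $\alpha > 0$. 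For $\alpha = 0$, the conventions $\alpha\log\alpha \to 0$ and $\alpha\log((1-x)/\alpha) \to 0$ collapse the right-hand side to $-\tfrac{1}{p}\log x$.

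To upgrade these pointwise density asymptotics to matching LDP bounds, the lower bound on open sets follows by integrating the density over small neighborhoods of a point $x_0 \in O\cap(0,1)$ and using continuity. The upper bound on closed sets is obtained via Laplace-type asymptotics applied to $\bP(B^{(n)}\in F) = \vartheta_n\mathbf{1}_{1\in F} + (1-\vartheta_n)\int_F \rho_n$: I would invoke Proposition \ref{prop:LaplacePrinc} together with Remark \ref{rmk:AdaptationLaplace} when the maximum of the exponent $(n/p-1)\log x + (\alpha_n-1)\log(1-x)$ lies in the interior of $F$ (the typical situation for $\alpha>0$), and Proposition \ref{prop:Breitung} together with Remark \ref{rmk:ApplBreitung} when the maximum is attained at the boundary point $x=1$ (which is forced in the regime $\alpha = 0$, where the exponent is monotone increasing for large $n$). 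The extra weight $(1-\vartheta_n)$ contributes $-\lim n^{-1}\log(1-\vartheta_n)$ to the rate, which equals $-c_{(1-\vartheta)}$ when $k(\vartheta) = 1$ and diverges to $+\infty$ when $k(\vartheta) > 1$; in the latter regime the beta component is exponentially suppressed and only the atom $\bP(B^{(n)}=1)=\vartheta_n$ survives, giving rate $0$ at $x=1$ since then $\vartheta_n \to 1$.

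The principal obstacle is the delicate Stirling bookkeeping of the divergent $\log n$ contributions and the careful matching of rate functions across the four regimes determined by $\alpha\in\{0\}\cup(0,\infty)$ and $k(\vartheta)\in\{1\}\cup(1,\infty)$. A secondary difficulty is the shift of the maximum of the exponent from the interior to the boundary $x=1$ as $\alpha\to 0$, which is precisely why the boundary Laplace asymptotic of Proposition \ref{prop:Breitung} (rather than the interior version of Proposition \ref{prop:LaplacePrinc}) is needed; the sequences $s_n^{(1)} = \vartheta_n\mathbf{1}_{1\in F}$ and $s_n^{(2)} = 1-\vartheta_n$ in Remarks \ref{rmk:AdaptationLaplace} and \ref{rmk:ApplBreitung} are tailored exactly to absorb the atom and the mixing weight into the same limiting expression.
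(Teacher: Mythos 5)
Your proposal reaches the same intermediate facts as the paper (the mixture law for $B^{(n)}$ via Proposition \ref{prop:DistributionNormB}, Stirling asymptotics for $B(n/p,\alpha_n)$, the interior-vs-boundary dichotomy handled by Propositions \ref{prop:LaplacePrinc} and \ref{prop:Breitung}), but takes a genuinely different route to the LDP itself. The paper works at the level of the cumulant generating function: it computes $\Lambda(t)=\lim_n n^{-1}\log\E[e^{n t B^{(n)}}]$, identifies it as a shifted Legendre--Fenchel transform, and invokes the G\"artner--Ellis theorem (Proposition \ref{prop:GaertnerEllis}), reading off the rate function as $\Lambda^*$ by the involution property. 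You instead propose a direct density-level argument: establish the pointwise exponential asymptotics $\lim_n n^{-1}\log\rho_n(x)$ and then derive the lower bound on open sets by localizing and the upper bound on closed sets by Laplace-type estimates. Both are legitimate; on a compact state space such as $[0,1]$ a weak LDP upgrades automatically to a full LDP with a good rate function, so your route avoids G\"artner--Ellis entirely and is in that sense more elementary. What the paper's route buys is a cleaner interface: the Legendre-transform machinery keeps the algebra organized and delivers the goodness of the rate function for free.

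One point you should not gloss over: the exponent in $\rho_n(x)$ is $(n/p-1)\log x + (\alpha_n-1)\log(1-x)$, which is $n$-\emph{dependent} in its coefficients (through $\alpha_n$), whereas Propositions \ref{prop:LaplacePrinc} and \ref{prop:Breitung} and the adaptations in Remarks \ref{rmk:AdaptationLaplace} and \ref{rmk:ApplBreitung} are stated for a \emph{fixed} exponent function $p(x)$. The paper handles this via an explicit $\varepsilon$-sandwich: for large $n$ and $y\in(0,1)$ the coefficient $\tfrac{\alpha_n-1}{n}$ is squeezed between $\alpha-\varepsilon$ and $\alpha+\varepsilon$ (and similarly for the other coefficient), the Laplace asymptotics are applied to the two $\varepsilon$-perturbed fixed exponents, and the bounds are closed by letting $\varepsilon\to 0$. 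Your proof would need the same sandwiching step (or an equivalent uniform-convergence argument) before Propositions \ref{prop:LaplacePrinc}/\ref{prop:Breitung} can legitimately be applied; as written, the proposal quotes them directly on an $n$-dependent exponent. The point is especially delicate in the $\alpha=0$ regime, where the location of the maximizer migrates to the boundary $x=1$ and the choice of which $\varepsilon$-perturbation to use on each side of the bound (interior Laplace on one side, boundary Laplace on the other) is precisely what makes the paper's argument go through. Once that is added, the rest of your sketch---Stirling cancellation, the $\vartheta_n\mathbf{1}_{1\in F}+(1-\vartheta_n)\int_F\rho_n$ decomposition, the $k(\vartheta)>1$ degeneracy---is sound and matches the paper's case analysis.
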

\begin{proof} We have seen in Proposition \ref{prop:DistributionNormB} that $B^{(n)}\sim \vartheta_n\delta_1 + (1-\vartheta_n)\textbf{B}\big({n\over p}, \alpha_n\big)$ (for $f \equiv 1$ with $m=0$). We intend to apply the theorem of Gärtner-Ellis (Proposition \ref{prop:GaertnerEllis}) to show the above LDP, somewhat following along the proof of Lemma 4.1 in \cite{APTldp}, and thus consider the following limit for $t\in \R$:
	\begin{eqnarray*}
	\Lambda(t)&:=& \lim\limits_{n \to  \infty} \frac{1}{n} \log \E\left[ e^{n t B^{(n)}} \right]\\
	\\
	&=& \lim\limits_{n \to  \infty} \frac{1}{n} \log \left[ \int_0^1 e^{n t x}  \left(\vartheta_n\delta_1 + (1-\vartheta_n)\textbf{B}\Big({n\over p}, \alpha_n\Big)\right) (\dint x) \right]\\
	\\
	&=& \lim\limits_{n \to  \infty} \frac{1}{n} \log \left[ \vartheta_n e^{n t}  + (1-\vartheta_n)\int_0^1 e^{n t x} \, \textbf{B}\Big({n\over p}, \alpha_n\Big)(\dint x) \right]\\
	\\
	&=& t + \lim\limits_{n \to  \infty} \frac{1}{n} \log \left[ \vartheta_n  + (1-\vartheta_n)\int_0^1 e^{n t (x-1)} \, \textbf{B}\Big({n\over p}, \alpha_n\Big)(\dint x) \right],
	\end{eqnarray*}
	which yields
	\begin{eqnarray*}
	\Lambda(t)&=& t + \lim\limits_{n \to  \infty} \frac{1}{n} \log \left[ \vartheta_n  + (1-\vartheta_n) \frac{1}{B(\frac{n}{p}, \alpha_n)}\int_0^1 e^{n t (x-1)} \, x^{\frac{n}{p}-1} (1-x)^{\alpha_n-1} \, \dint x \right].
	\end{eqnarray*}
The change of variables $y = 1-x$ then gives us
\begin{eqnarray} \label{eq:LemmaIntTerm1}
	\nonumber \Lambda(t)	&=& t + \lim\limits_{n \to  \infty} \frac{1}{n} \log \left[ \vartheta_n  + (1-\vartheta_n) \frac{1}{B(\frac{n}{p}, \alpha_n)} \int_0^1 e^{-n t y} \, (1-y)^{\frac{n}{p}-1} y^{\alpha_n-1} \, \dint y \right]\\
	\nonumber\\
	&=&t + \lim\limits_{n \to  \infty} \frac{1}{n} \log \left[ \vartheta_n  + (1-\vartheta_n) \frac{1}{B(\frac{n}{p}, \alpha_n)} \int_0^1 e^{n(- ty \, + \frac{{n/p\, -1}}{n} \log(1-y) + \frac{\alpha_n-1}{n} \log(y))}  \, \dint y \right].
\end{eqnarray}
At this point, we need to distinguish the cases $k(\vartheta)=1$ and $k(\vartheta)>1$ and the cases $\alpha=0$ and $\alpha >0$. The method of 
proof will be mostly the same for those with $k(\vartheta)=1$, which is to give upper and lower bounds for the integrand in the above expression, such that only the initial coefficient in the exponent remains  dependent on $n$, whereby we can apply one of the asymptotic integral expansion results presented in Section \ref{sec:Preliminaries}. After some explicit calculations, we will then let our upper and lower estimates  approach our initial integrand, and thereby give the sought-after limit explicitly. For $k(\vartheta)>1$ the proof follows from rather straightforward calculations. \\

We begin with $k(\vartheta)=1$ and $\alpha >0$. For any $\epsilon >0$ there exists $n_0 \in\N$ such that for $n \ge n_0$ and $y \in (0,1)$ we have that
\begin{eqnarray} \label{eq:BoundAbove}
\displaystyle e^{n(- ty \, + (\frac{1}{p} - \frac{1}{n}) \log(1-y) + \frac{\alpha_n-1}{n} \log(y)} \le e^{n(- ty \, + (\frac{1}{p}- \epsilon) \log(1-y) + (\alpha - \epsilon) \log(y))}
\end{eqnarray}
and 
\begin{eqnarray} \label{eq:BoundBelow}
\displaystyle e^{n(- ty \, + (\frac{1}{p} - \frac{1}{n}) \log(1-y) + \frac{\alpha_n-1}{n} \log(y)}  \ge e^{n(- ty \, + (\frac{1}{p}+ \epsilon) \log(1-y) + (\alpha + \epsilon)  \log(y))}.
\end{eqnarray}
Thus, the term in \eqref{eq:LemmaIntTerm1} for $\alpha >0$ is bounded from above by
\begin{eqnarray} \label{eq:IntBoundAbove}
\qquad \Lambda_{-\epsilon} (t) := t + \lim\limits_{n \to  \infty} \frac{1}{n} \log \left[ \vartheta_n  + (1-\vartheta_n) \frac{1}{B(\frac{n}{p}, \alpha_n)} \int_0^1 e^{n(- ty \, + (\frac{1}{p}- \epsilon) \log(1-y) + (\alpha - \epsilon) \log(y))}  \, \dint y \right],
\end{eqnarray}
and from below by
\begin{eqnarray} \label{eq:IntBoundBelow}
\qquad \Lambda_{+\epsilon} (t) := t + \lim\limits_{n \to  \infty} \frac{1}{n} \log \left[ \vartheta_n  + (1-\vartheta_n) \frac{1}{B(\frac{n}{p}, \alpha_n)} \int_0^1 e^{n(- ty \, + (\frac{1}{p}+ \epsilon) \log(1-y) + (\alpha + \epsilon) \log(y))}  \, \dint y \right]. 
\end{eqnarray}
We want to apply the adapted Laplace principle from Remark \ref{rmk:AdaptationLaplace} to the terms in limits of the above expressions, and thus denote 
$$\varrho_{-\epsilon, t}(y):=-ty \, + \Big(\frac{1}{p}- \epsilon\Big) \log(1-y) + (\alpha - \epsilon) \log(y),$$
and 
$$\varrho_{+\epsilon, t}(y) := -ty \, +  \Big(\frac{1}{p}+ \epsilon\Big) \log(1-y) + (\alpha + \epsilon) \log(y).$$
We already have that $(\vartheta_n)_{n \in \N}$ is bounded and non-negative. Also, $(1-\vartheta_n) {B(\frac{n}{p}, \alpha_n)}^{-1}$ is positive for all $n\in \N$ bigger than some $N \in \N$, since $k(\vartheta)=1$ implies that $\vartheta_n \ne 1$ for $n \in\N$ bigger than some $N\in\N$. Furthermore, both $\varrho_{-\epsilon, t}$ and $\varrho_{+\epsilon, t}$ are twice continuously differentiable on $(0,1)$. It remains to show that \eqref{eq:LaplaceCondition} holds for the sequence $(s^{(2)})_{n \in \N}$ with $s^{(2)}_n := (1-\vartheta_n) {B(\frac{n}{p}, \alpha_n)}^{-1}$, and that the maximum conditions of the Laplace principle are met by $\varrho_{-\epsilon, t}$ and $\varrho_{+\epsilon, t}$. We begin with the former. It follows from $\alpha >0$ that $\alpha_n \to +\infty$, thus Stirling's formula tells us that, for increasing $n$, $B(\frac{n}{p}, \alpha_n)$ behaves like
$$ \sqrt{2 \pi} \frac{(\frac{n}{p})^{\frac{n}{p} - 1/2} \alpha_n^{\alpha_n-1/2}}{(\frac{n}{p} + \alpha_n)^{\frac{n}{p} + \alpha_n - 1/2}}.$$ 
Hence, we have 
\begin{eqnarray} \label{eq:Stirling}
\nonumber \lim\limits_{n \to \infty} \frac{1}{n} \log \frac{1}{B(\frac{n}{p}, \alpha_n)} &=& - \lim\limits_{n \to \infty} \Bigg[ \frac{\log \sqrt{2\pi}}{n} + \frac{\frac{n}{p} - \frac{1}{2}}{n}\Big(\log n + \log \frac{n/p}{n}\Big)  + \frac{\alpha_n - \frac{1}{2}}{n}\Big(\log n + \log \frac{\alpha_n}{n}\Big) \\
\nonumber\\
\nonumber&&  \qquad \qquad  - \frac{\frac{n}{p} + \alpha_n - \frac{1}{2}}{n}\Big(\log n + \log \frac{\frac{n}{p} + \alpha_n}{n}\Big)\Bigg]\\
\nonumber\\
&=& - \frac{1}{p} \log \frac{1}{p} - \alpha \log \alpha + \Big(\frac{1}{p} + \alpha\Big) \log\Big(\frac{1}{p} + \alpha \Big),
\end{eqnarray}
so with $k(\vartheta)=1$ and the above it holds for the sequence $(s^{(2)})_{n \in \N}$ with $s^{(2)}:= (1-\vartheta_n) {B(\frac{n}{p}, \alpha_n)}^{-1}$ that
\begin{eqnarray} \label{eq:LogBeta} 
\lim\limits_{n \to \infty} \frac{1}{n} \log s_n^{(2)} = c_{(1- \vartheta)} - \frac{1}{p} \log \frac{1}{p} - \alpha \log \alpha + \Big(\frac{1}{p} + \alpha\Big) \log\Big(\frac{1}{p} + \alpha \Big) < +\infty,
\end{eqnarray}
Regarding the maximum conditions of the Laplace principle, direct calculation yields that for $\epsilon < \min\{ \alpha, \frac{1}{p}\}$ and $t \in \R \setminus \{0\}$ we have
\begin{eqnarray} \label{eq:SupLaplaceFct}
\nonumber \sup\limits_{y \in (0,1)} \varrho_{-\epsilon, t}(y) &=& \sup\limits_{y \in (0,1)} \Big[-ty \, + \Big( \Big(\frac{1}{p} - \epsilon \Big) \log(1-y) + (\alpha - \epsilon) \log(y)\Big)\Big]\\
\nonumber \\
\nonumber &=& \frac{1}{2} \Bigg[ -t - \Big( \alpha + \frac{1}{p} - 2\epsilon \Big) - \sqrt{(\alpha + \frac{1}{p} - 2\epsilon + t)^2 - 4(\alpha - \epsilon)t} \Bigg]\\
\nonumber \\
\nonumber && + \, \Big(\frac{1}{p}-\epsilon \Big) \log \frac{t - \Big( \alpha + \frac{1}{p} - 2\epsilon\Big) - \sqrt{(\alpha + \frac{1}{p} - 2\epsilon + t)^2 - 4(\alpha - \epsilon)t}}{2t}\\
\nonumber \\
&& + \, (\alpha - \epsilon) \log \frac{t + \Big( \alpha + \frac{1}{p} - 2\epsilon\Big) + \sqrt{(\alpha + \frac{1}{p} - 2\epsilon + t)^2 - 4(\alpha - \epsilon)t}}{2t},
\end{eqnarray}
and for $t=0$ it holds that 
\begin{eqnarray}
\nonumber \sup\limits_{y \in (0,1)} \varrho_{-\epsilon, 0}(y) &=&   \Big(\frac{1}{p}-\epsilon \Big) \log \frac{\frac{1}{p}}{\alpha + \frac{1}{p} - 2\epsilon } +  (\alpha - \epsilon) \log \frac{\alpha - \epsilon }{\alpha + \frac{1}{p} - 2\epsilon}.
\end{eqnarray}
The analogue of the above holds for the maximum of $\varrho_{+\epsilon,t}$, only with $-\epsilon$ replaced by $+\epsilon$ (In this latter calculation the condition $\epsilon < \min\{ \alpha, \frac{1}{p}\}$ is not required). By the above, it follows that the suprema of  $\varrho_{-\epsilon, t}$ and  $\varrho_{+\epsilon, t}$ are not attained on the boundary of the interval $[0,1]$, hence the Laplace principle can be applied to both. But before doing so, by setting 
\begin{eqnarray*} \label{eq:DefPsi1}
 \Psi_{-\epsilon}(y):= - \Big(\frac{1}{p}-\epsilon \Big) \log(1-y) - (\alpha - \epsilon) \log(y),
\end{eqnarray*}
and 
\begin{eqnarray*} \label{eq:DefPsi2}
 \Psi_{+\epsilon}(y):= - \Big(\frac{1}{p}+\epsilon \Big)  \log(1-y) - (\alpha + \epsilon) \log(y),
\end{eqnarray*}
we see that
\begin{eqnarray} \label{eq:IdentLegendre1}
\sup\limits_{y \in (0,1)} \varrho_{-\epsilon, t}(y) = \sup\limits_{y \in (0,1)} \Big[(-t)y - \Psi_{-\epsilon}(y)\Big] = \Psi_{-\epsilon}^*(-t)
\end{eqnarray}
and
\begin{eqnarray} \label{eq:IdentLegendre2}
\sup\limits_{y \in (0,1)} \varrho_{+\epsilon, t}(y) = \sup\limits_{y \in (0,1)} \Big[(-t)y - \Psi_{+\epsilon}(y)\Big] = \Psi_{+\epsilon}^*(-t),
\end{eqnarray}
i.e., the suprema of $\varrho_{-\epsilon, t}$ and $\varrho_{+\epsilon, t}$ can be written as Legendre-Fenchel transforms of $\Psi_{-\epsilon}$ and $\Psi_{+\epsilon}$ at $(-t)$, respectively. 
Now, using the adapted Laplace principle from \eqref{eq:ExpLaplace2}, and the identities from \eqref{eq:LogBeta}, \eqref{eq:IdentLegendre1}, and \eqref{eq:IdentLegendre2}, we can reformulate the respective upper and lower bounds $\Lambda_{-\epsilon}(t), \Lambda_{+\epsilon}(t)$ from \eqref{eq:IntBoundAbove} and \eqref{eq:IntBoundBelow} as
\begin{eqnarray} \label{eq:IntBoundAboveLaplace}
\Lambda_{-\epsilon} (t) = t + c_{(1- \vartheta)} - \frac{1}{p} \log \frac{1}{p} - \alpha \log \alpha + \Big(\frac{1}{p} + \alpha\Big) \log\Big(\frac{1}{p} + \alpha \Big) + \Psi_{-\epsilon}^*(-t)
\end{eqnarray}
and
\begin{eqnarray} \label{eq:IntBoundBelowLaplace}
\Lambda_{+\epsilon} (t) = t + c_{(1- \vartheta)} - \frac{1}{p} \log \frac{1}{p} - \alpha \log \alpha + \Big(\frac{1}{p} + \alpha\Big) \log\Big(\frac{1}{p} + \alpha \Big) + \Psi_{+\epsilon}^*(-t).
\end{eqnarray}
As the above holds for every sufficiently small $\epsilon >0$, considering the limit of $\Lambda_{-\epsilon}(t)$ and $\Lambda_{+\epsilon}(t)$ as $\epsilon$ tends to $0$ yields that 
$$\Lambda (t) = t + c_{(1- \vartheta)} - \frac{1}{p} \log \frac{1}{p} - \alpha \log \alpha + \Big(\frac{1}{p} + \alpha\Big) \log\Big(\frac{1}{p} + \alpha \Big) + \Psi^*(-t),$$
where $\Psi^*$ is the Legendre-Fenchel transform of $\Psi$ with $\Psi(y) := -\frac{1}{p} \log(1-y) - \alpha \log(y)$, which is the limit of both $\Psi_{-\epsilon}$ and $\Psi_{+\epsilon}$ as $\epsilon$ tends to $0$.
Since we can see that $\Lambda$ is finite in an open neighbourhood of the origin and is semi-continuous and differentiable, it now follows via the theorem of Gärtner-Ellis (Proposition \ref{prop:GaertnerEllis}) that the sequence $(B^{(n)})_{n\in\N}$ satisfies an LDP with speed $n$ and rate function $\Lambda^*$.  Setting 
$$c_{\vartheta, p, \alpha} := c_{(1- \vartheta)} - \frac{1}{p} \log \frac{1}{p} - \alpha \log \alpha + \Big(\frac{1}{p} + \alpha\Big) \log\Big(\frac{1}{p} + \alpha \Big),$$
we get that for $x \in (0,1)$
\begin{eqnarray}
\nonumber \Lambda^*(x) &=& \sup\limits_{t \in \R} \Big[ tx - \Lambda(t)\Big]\\
\nonumber \\
\nonumber&=& \sup\limits_{t \in \R} \Big[ tx - t - \Psi^*(-t)\Big] - c_{\vartheta, p, \alpha}\\
\nonumber \\
\nonumber&=& \sup\limits_{t \in \R} \Big[ t(x-1) -  \Psi^*(-t)\Big] - c_{\vartheta, p, \alpha}.
\end{eqnarray}
Again, using the change of variables $z = 1-x$ as in \eqref{eq:LemmaIntTerm1}, we get
\begin{eqnarray}
\nonumber \Lambda^*(x) &=& \sup\limits_{t \in \R} \Big[ (-t)z + \Psi^*(-t)\Big] - c_{\vartheta, p, \alpha} = \sup\limits_{\tilde t \in \R} \Big[\tilde t z - \Psi^*(\tilde t)\Big] - c_{\vartheta, p, \alpha} = (\Psi^*)^*(z) - c_{\vartheta, p, \alpha}.
\end{eqnarray}
As the Legendre-Fenchel transform is an involution on $(0,1)$, we have that%
\begin{eqnarray}
\nonumber \Lambda^*(x) &=& (\Psi^{*})^*(z) - c_{\vartheta, p, \alpha} = \Psi(z) - c_{\vartheta, p, \alpha}.
\end{eqnarray}
Plugging in the definition of $\Psi$ and rolling back the previous change of variables, we have that
\begin{eqnarray}
\nonumber \Lambda^*(x) &=& -\frac{1}{p} \log(1-z) - \alpha \log(z) + \frac{1}{p} \log \frac{1}{p} + \alpha \log \alpha - \Big(\frac{1}{p} + \alpha\Big) \log\Big(\frac{1}{p} + \alpha \Big) - c_{(1- \vartheta)}\\
\nonumber\\
\nonumber&=& -\frac{1}{p} \log(x) - \alpha \log(1-x) + \frac{1}{p} \log \frac{1}{p} + \alpha \log \alpha - \Big(\frac{1}{p} + \alpha\Big) \log\Big(\frac{1}{p} + \alpha \Big) - c_{(1- \vartheta)}\\
\nonumber\\
&=& -\frac{1}{p} \log(xp) - \alpha \log\Big(\frac{1-x}{\alpha}\Big) - \Big(\frac{1}{p} + \alpha\Big) \log\Big(\frac{1}{p} + \alpha \Big) - c_{(1- \vartheta)},
\end{eqnarray}
yielding the first case of our rate function. For $x \in \{0,1\}$ direct computation yields that $\Lambda^*(x)=+\infty$ in these cases.\\
\\
For $k(\vartheta)=1$ and $\alpha =0$ we need to slightly adapt some of the steps in the proof of the previous case. We again provide upper and lower bounds for the integrand, where the lower bound will be handled completely analogue to the previous case via the adapted Laplace principle \eqref{eq:ExpLaplace2}, but the upper bound needs to be approached via the asymptotic integral results from \eqref{eq:ExpLaplaceBreitung}. Let $\alpha =0$, then there exists $n_0 \in\N$ such that for $n \ge n_0$ and $y \in (0,1)$ we have that
\begin{eqnarray} \label{eq:BoundAboveBelow2}
\qquad  e^{n(- ty \, + (\frac{1}{p}+\epsilon) \log(1-y) + \epsilon \log(y))} \le e^{n(- ty \, + (\frac{1}{p}-\frac{1}{n}) \log(1-y) + \frac{\alpha_n-1}{n} \log(y)} \le \, e^{n(- ty \, + (\frac{1}{p}-\epsilon) \log(1-y))}.
\end{eqnarray}
We choose a different upper bound here than in the previous case in \eqref{eq:BoundAbove}, since for $\alpha=0$ and $t \ge (-1)$ the function $- ty \, + (\frac{1}{p}-\epsilon) \log(1-y) + (\alpha - \epsilon) \log(y)$ is strictly decreasing and attains its maximum over $[0,1]$ on the boundary of the interval at $0$. Since this is not the case for the lower bound in \eqref{eq:BoundBelow}, we can use its analogue for $\alpha=0$ here as well. %
With these bounds we get the following respective upper and lower bounds for the term in \eqref{eq:LemmaIntTerm1}:
\begin{eqnarray} \label{eq:IntBoundAbove2}
\qquad \Lambda_{- \epsilon} (t) := t + \lim\limits_{n \to  \infty} \frac{1}{n} \log \left[ \vartheta_n  + (1-\vartheta_n) \frac{1}{B(\frac{n}{p}, \alpha_n)} \int_0^1 \, e^{n(- ty \, + (\frac{1}{p}-\epsilon)\log(1-y))}  \, \dint y \right]
\end{eqnarray}
and
\begin{eqnarray} \label{eq:IntBoundBelow2}
\qquad \Lambda_{+\epsilon} (t) := t + \lim\limits_{n \to  \infty} \frac{1}{n} \log \left[ \vartheta_n  + (1-\vartheta_n) \frac{1}{B(\frac{n}{p}, \alpha_n)} \int_0^1 e^{n(- ty \, + (\frac{1}{p}+\epsilon)\log(1-y) + \epsilon \log(y))}  \, \dint y \right]. 
\end{eqnarray}
We again need to consider the behavior of $(s^{(2)})_{n \in \N}$ with $s^{(2)}_n := (1-\vartheta_n) {B(\frac{n}{p}, \alpha_n)}^{-1}$ and check the conditions of the relevant asymptotic integral expansions for the functions in the exponents of the respective integrands, denoted as 
$$ \tilde\varrho_{-\epsilon, t}(y):=-ty \, + \Big(\frac{1}{p}-\epsilon\Big) \log(1-y) \quad \text{ and } \quad \tilde\varrho_{+\epsilon, t}(y) := -ty \, + \Big(\frac{1}{p}+\epsilon\Big)\log(1-y) + \epsilon \log(y).$$
If, on the one hand, both $\alpha_n \to +\infty$ and $\alpha =0$ hold simultaneously, applying Stirling's formula as in \eqref{eq:Stirling} and interpreting the expression $0 \log(0)$ as $0$ yields that 
$$ \lim\limits_{n \to \infty} \frac{1}{n} \log \frac{1}{B(\frac{n}{p}, \alpha_n)} = 0.
$$
If, on the other hand, $\alpha_n$ is bounded, $B(\frac{n}{p}, \alpha_n)$ behaves like $\Gamma(\alpha_n) \, \big(\frac{n}{p}\big)^{-\alpha_n}$, which implies
$$
 \lim\limits_{n \to \infty} \frac{1}{n} \log \frac{1}{B(\frac{n}{p}, \alpha_n)} =  - \lim\limits_{n \to \infty} \frac{\log(\Gamma(\alpha_n))}{n} -  \lim\limits_{n \to \infty} \frac{\alpha_n}{n} \log\Big(\frac{n}{p}\Big) = 0.
$$
The positivity of $s^{(2)}_n$ follows again by $k(\vartheta)=1$. The function $\tilde p_{+\epsilon, t}$ satisfies the conditions of the Laplace principle (Proposition \ref{prop:LaplacePrinc}) by the same arguments as in the previous case. 
%
%
We again set
$$
	\tilde \Psi_{-\epsilon}(y):= - \Big(\frac{1}{p}-\epsilon \Big) \log(1-y), \quad \textup{ and }  \quad
	\tilde \Psi_{+\epsilon}(y):= - \Big(\frac{1}{p}+\epsilon \Big)  \log(1-y) -  \epsilon \log(y),
$$
such that
\begin{eqnarray*} 
\sup\limits_{y \in (0,1)} \tilde\varrho_{-\epsilon, t}(y) = \tilde\Psi_{-\epsilon}^*(-t) \quad \textup{ and } \quad
\sup\limits_{y \in (0,1)} \tilde\varrho_{+\epsilon, t}(y)  = \tilde\Psi_{+\epsilon}^*(-t),
\end{eqnarray*}
as in \eqref{eq:IdentLegendre1} and \eqref{eq:IdentLegendre2}. 
Now, applying the adapted Laplace principle from \eqref{eq:ExpLaplace2} to the limit in $\Lambda_{+\epsilon}(t)$ in \eqref{eq:IntBoundBelow2}, we get
\begin{eqnarray*} \label{eq:IntBoundBelowLaplace2}
\Lambda_{+\epsilon} (t) = t + c_{(1- \vartheta)} + \tilde \Psi_{+\epsilon}^*(-t).
\end{eqnarray*}
%
%
%
%
%
Again, we consider the limit of $\Lambda_{+\epsilon} $ as $\epsilon$ tends to zero, giving the lower bound for $\Lambda(t)$
\begin{eqnarray} \label{eq:IntBoundBelowLaplace3}
\Lambda_{+0} (t) = t + c_{(1- \vartheta)} + \tilde \Psi_{+0}^*(-t) = t + c_{(1- \vartheta)}  + \sup\limits_{y \in (0,1)} \Big[ -ty \, + \frac{1}{p} \log(1-y)\Big].
\end{eqnarray}
As to the upper bound, for $t \ge (-1)$ the function $\tilde \varrho_{-\epsilon, t}$ satisfies conditions $(a)-(d)$ from Proposition \ref{prop:Breitung}, thus, by \eqref{eq:ExpLaplaceBreitung} from Remark \ref{rmk:ApplBreitung} we get the upper bound for $\Lambda(t)$:
\begin{eqnarray*} \label{eq:IntBoundAboveLaplace3}
\Lambda_{-\epsilon} (t) = t + c_{(1- \vartheta)} + \tilde \varrho_{-\epsilon, t}(0) = t + c_{(1- \vartheta)} + \sup\limits_{y \in[0,1]} \Big[ -ty \, + \Big(\frac{1}{p}-\epsilon\Big) \log(1-y)\Big] = t + c_{(1- \vartheta)}.
\end{eqnarray*}
For $t < (-1)$ the function $\tilde \Psi_{-\epsilon, t}$ is again strictly concave and attains its supremum on $(0,1)$, so applying the adapted Laplace principle  \eqref{eq:ExpLaplace2} yields
\begin{eqnarray*} \label{eq:IntBoundAboveLaplace4}
	\Lambda_{-\epsilon} (t) = t + c_{(1- \vartheta)} + \tilde \Psi_{-\epsilon}^*(t). 
\end{eqnarray*}
Combining the two and again considering the limit for $\epsilon$ tending to zero, we see that overall it holds for $t \in \R$ 
\begin{eqnarray} \label{eq:IntBoundAboveLaplace5}
\Lambda_{-0} (t) = t + c_{(1- \vartheta)} + \sup\limits_{y \in (0,1)} \Big[ -ty \, + \frac{1}{p} \log(1-y) \Big],
\end{eqnarray}
which together with \eqref{eq:IntBoundBelowLaplace3} yields that 
$$\Lambda (t) = t + c_{(1- \vartheta)} + \sup\limits_{y \in (0,1)} \Big[ -ty \, + \frac{1}{p} \log(1-y) \Big]= t + c_{(1- \vartheta)} + \tilde \Psi^*(-t),$$
with $\tilde \Psi^*$ being the Legendre-Fenchel transform of $\tilde\Psi$ with $\tilde \Psi(y) := -\frac{1}{p} \log(1-y)$. By the theorem of Gärtner-Ellis (Proposition \ref{prop:GaertnerEllis}) and the same involution and change of variables arguments as in the previous case, we get that for $\alpha =0$ the sequence $(B^{(n)})_{n\in\N}$ thus satisfies an LDP with speed $n$ and rate function
\begin{eqnarray}
\nonumber \Lambda^*(x) &=& -\frac{1}{p} \log(x) - c_{(1- \vartheta)}.
\end{eqnarray}
Lastly, let $k(\vartheta)>1$. This implies on the one hand that $\vartheta =1$ and on the other hand that $(1-\vartheta_n)$ tends to zero faster than the integral expression in \eqref{eq:LemmaIntTerm1} tends to infinity, i.e., the product of both tends to zero. Hence,  the overall expression in \eqref{eq:LemmaIntTerm1} simplifies to
\begin{eqnarray}\label{eq:k=0Int}
\nonumber \Lambda(t)	&=&t + \lim\limits_{n \to  \infty} \frac{1}{n} \log \left[ \vartheta_n  + (1-\vartheta_n) \frac{1}{B(\frac{n}{p}, \alpha_n)} \int_0^1 e^{n(- ty \, + \frac{{n/p-1}}{n} \log(1-y) + \frac{\alpha_n-1}{n} \log(y))}  \, \dint y \right]\\
\nonumber \\
\nonumber&=& t + \lim\limits_{n \to  \infty} \frac{1}{n} \log \left[ \vartheta_n\right] \, \, \,  = \, \, \, t, 
\end{eqnarray}
which implies via the theorem of Gärtner-Ellis (Proposition \ref{prop:GaertnerEllis}) that the sequence $(B^{(n)})_{n\in\N}$ satisfies an LDP on $[0,1]$ with speed $n$ and rate function
\begin{eqnarray}
\nonumber 	\mathcal{I}_{\rm beta}(x) = \Lambda^*(x) &=& \sup\limits_{t \in \R} \Big[ tx \, -t \Big] =  \sup\limits_{t \in \R} \Big[ t(x-1) \Big] =  \begin{cases} 
0&: x=1\\
+\infty &: otherwise.
\end{cases} 
\end{eqnarray}
This finishes the proof. 
\end{proof} 
\begin{proof}[Proof of Theorem \ref{thm:LDPEmpMeasure}] By Proposition \ref{prop:Barthe}, Lemma \ref{lem:DistributionConeMeasure} (i), and Proposition \ref{prop:ProbRepMixedDistributionConeUnif} and \ref{prop:DistributionNormB} (all for $f\equiv 1$), we have that	
$$
\mu_n={1\over n}\sum\limits_{i=1}^n\delta_{n^{1/p}Z^{(n)}_i} \overset{d}{=} {1\over n}\sum_{i=1}^n\delta_{n^{1/p}{X_i^{(n)}\over(\|X^{(n)}\|_p^p+W^{(n)})^{1/p}}} = {1\over n}\sum_{i=1}^n\delta_{n^{1/p}{B^{(n)}}^{1/p}{X_i^{(n)}\over\|X^{(n)}\|_p}},
$$
where $X^{(n)}$ is a random vector with density $C_{n,p,1} \, e^{-\|x\|_p^p}$, $x\in\R^n$, (i.e., with distribution ${\bN}_p^{\otimes n}$), $W^{(n)}$ a random variable on $[0,\infty)$ with distribution $\bW_n=\vartheta_n\delta_0+(1-\vartheta_n)\textbf{G}(\alpha_n, 1)$, and $B^{(n)}={\|X^{(n)}\|_p^p\over\|X^{(n)}\|_p^p+W^{(n)}}$ as in Lemma \ref{lem:LDPBetaExtended-Revised}. Let us define a sequence of random probability measures $(\xi_n)_{n\in\N}$ by
 $$
\xi_n := {1\over n}\sum_{i=1}^n\delta_{n^{1/p}{X_i^{(n)}\over\|X^{(n)}\|_p}}.
$$
Then, since $X^{(n)}\over\|X^{(n)}\|_p$ is independent from $\|X^{(n)}\|_p$ (see \cite[Theorem 3.2]{PTTSurvey}), Proposition \ref{prop:ProdLDP}, Proposition \ref{prop:LDPKimRamanan} and Lemma \ref{lem:LDPBetaExtended-Revised}  imply that the sequence of random elements $(\xi_n,B^{(n)})_{n\in\N}$ satisfies a large deviation principle on $\mathcal{M}(\R)\times[0,1]$ with speed $n$ and good rate function
$$
\mathcal{I}_1(\xi,z) = \mathcal{I}_{\textup{cone}}(\xi) + \mathcal{I}_{\rm beta}(z),\qquad (\xi,z)\in\mathcal{M}(\R)\times[0,1].
$$
In the case $z=0$, we can see by Lemma \ref{lem:LDPBetaExtended-Revised}, that $\mathcal{I}_{\rm beta}(0)=+\infty$ and thereby  $\mathcal{I}_1(\xi,0)=\mathcal{I}_{\textup{cone}}(\xi) + \infty = + \infty$ for all $\xi \in \mathcal{M}(\R)$. Thus, we confine ourselves to $z\in (0,1]$. Next, we introduce the continuous map $F_p:\mathcal{M}(\R)\times(0,1]\to\mathcal{M}(\R):(\xi,z)\mapsto \xi(z^{-1/p}\,\cdot\,)$ and notice that for each $n\in\N$ and for any Borel set $A\in\mathcal{B}(\R)$, $F_p(\xi_n,B^{(n)})(A)= \mu_n(A)$. By the contraction principle in Proposition \ref{prop:ContrPrinc}, the sequence of random probability measures $(\mu_n)_{n\in\N}$ thus satisfies a large deviation principle with speed $n$ and good rate function $\mathcal{I}_2: \mathcal{M}(\R) \times (0,1] \to [0, \infty]$ given by
$$
\mathcal{I}_2(\mu) = \inf_{\xi(z^{-1/p} \, \cdot \,)=\mu(\cdot)}\Big[\mathcal{I}_{\textup{cone}}(\xi) + \mathcal{I}_{\rm beta}(z)\Big],\qquad\mu\in\mathcal{M}(\R), z \in (0, 1].
$$
It remains to show that $\mathcal{I}_2$ in fact coincides with the rate function $\mathcal{I}_{\textup{emp}}$ stated in the theorem. The rate functions $\mathcal{I}_{\textup{cone}}$ and $\mathcal{I}_{\rm beta}$ each depend on their respective parameters $m_p(\mu) \in [0, \infty]$, $k(\vartheta) \ge 1$ and $\alpha \in [0,\infty)$, so we need to check for which parameter configurations they remain finite.
\medskip
\paragraph{\em Case 1.} Let $\mu\in\mathcal{M}(\R)$ be such that $m_p(\mu) > 1$. Then, by $\xi(z^{-1/p}\, \cdot \,)=\mu(\,\cdot \,)$, we know that $m_p(\xi) = z^{-1} \,m_p(\mu)$, so $m_p(\xi) >1$. Therefore  $\mathcal{I}_{\textup{cone}}(\xi) =+\infty$ and $\mathcal{I}_{\textup{emp}}(\mu)= \mathcal{I}_2(\mu) = +\infty$.%
\medskip
\paragraph{\em Case 2.} Let $\mu\in\mathcal{M}(\R)$ be such that $m_p(\mu) \leq 1$ and $\textbf{W}_n$ be such that $\alpha =0$. By $\xi(z^{-1/p} \,\cdot\,)=\mu(\,\cdot\,)$, we again know that $m_p(\xi) = z^{-1} \,m_p(\mu)$. Now we have to distinguish between the cases $m_p(\xi) >1$ and $m_p(\xi) \leq1$. In the first case, $\mathcal{I}_{\textup{cone}}(\xi) =+\infty$ and therefore $\mathcal{I}_{\textup{emp}}(\mu)= \mathcal{I}_2(\mu) = +\infty$. If $m_p(\xi) \leq1$, then $z$ is restricted to the non-empty interval $[m_p(\mu), 1]$. Hence, $z\in [m_p(\mu),1] \cap (0,1]$. If $k(\vartheta)>1$, we now by Lemma \ref{lem:LDPBetaExtended-Revised} that  $\mathcal{I}_{\rm beta}(z)$ is only finite for $z=1$, in which case it follows that $\xi = \mu$ and $\mathcal{I}_2(\mu) = \mathcal{I}_{\textup{cone}}(\mu)= \mathcal{I}_{\textup{cone}}(\mu) - c_{(1-\vartheta)}$. If $k(\vartheta)=1$, by Proposition \ref{prop:LDPKimRamanan} and Lemma \ref{lem:LDPBetaExtended-Revised}, we get 
\begin{align*}
	\mathcal{I}_2(\mu) &= \inf_{\xi(z^{-1/p} \, \cdot \,)=\mu(\cdot)}\Big[H(\xi\|\bN_p) + (1 -m_p(\xi)) -{1\over p}\log (z) - c_{(1-\vartheta)}\Big]\\
	&= \inf_{\xi(z^{-1/p} \, \cdot \,)=\mu(\cdot)}\Big[ \int_\R \log\frac{\xi(\dint x)}{\bN_p(\dint x)} \,\xi( \dint x) + (1 -z^{-1}m_p(\mu)) -{1\over p}\log (z)\Big] - c_{(1-\vartheta)}.
\end{align*}
The change of variables $y=z^{1/p}x$ then gives us $\xi(\dint x) = \xi(\dint z^{-1/p}y)=\mu( \dint y)$, and
$$ \bN_p(\dint x)= \bN_p(\dint z^{-1/p}y) = (2z^{1/p}\Gamma(1+1/p))^{-1} \, e^{-z^{-1}|y|^p} \dint y=: \bN_{p, z}( \dint y).$$
Thus, 
\begin{align*}
	\mathcal{I}_2(\mu)  &= \inf_{z\in [m_p(\mu),1] \cap (0,1]}\Big[ \int_\R \log\frac{ \mu(\dint y)}{\bN_{p, z}(\dint y)}\, \mu(\dint y) + (1 -z^{-1}m_p(\mu)) -{1\over p}\log (z) \Big] - c_{(1-\vartheta)},
\end{align*}
which is only dependent on $z\in [m_p(\mu),1] \cap (0,1]$. We further compute
\begin{align*}
	\int_\R \log\frac{ \mu( \dint y)}{\bN_{p, z}( \dint y)} \,\mu(\dint y)&=  \int_\R \log\Big(\frac{ \mu(\dint y)}{\bN_p(\dint y)} \frac{\bN_p(\dint y)}{\bN_{p, z}(\dint y)}\Big) \, \mu(\dint y)\\
	&=  \int_\R \log \frac{ \mu(\dint y)}{\bN_p(\dint y)} \, \mu(\dint y) +  \int_\R \log \frac{\bN_p(\dint y)}{\bN_{p, z}(\dint y)} \, \mu(\dint y)\\
	&= H(\mu\|\bN_p) +  \int_\R \log \frac{\bN_p(\dint y)}{\bN_{p, z}(\dint y)} \, \mu(\dint y).
\end{align*}
Since
$$\displaystyle \frac{\bN_p(\dint y)}{ \bN_{p, z}(\dint y)} = \displaystyle e^{(z^{-1}-1)|y|^p}z^{1/p},$$
we conclude that
\begin{align} \label{eq:RN-Derivative}
	\nonumber \int_\R \log\frac{ \mu(\dint y)}{\bN_{p, z}(\dint y)} \mu(\dint y) &= H(\mu\|\bN_p) +  \int_\R \log \big(e^{(z^{-1}-1)|y|^p} z^{1/p}\big) \, \mu(\dint y)\\
	\nonumber &= H(\mu\|\bN_p) +  (z^{-1}-1) \int_\R |y|^p \,  \mu(\dint y) +  \frac{1}{p}\log(z) \int_\R \,  \mu(\dint y) \\
	&= H(\mu\|\bN_p) +  (z^{-1}-1) m_p(\mu) +  \frac{1}{p}\log(z).
\end{align}
Hence, the rate function is of the form
\begin{equation*}
	\mathcal{I}_2(\mu)  = H(\mu\|\bN_p) +   (1 -m_p(\mu)) - c_{(1-\vartheta)} =\mathcal{I}_{\rm cone}(\mu) - c_{(1-\vartheta)}.
\end{equation*}
%
%
\paragraph{\em Case 3.} Let $\mu\in\mathcal{M}(\R)$ be such that $m_p(\mu) \leq 1$ and $\textbf{W}_n$ be such that $\alpha >0$. By the same arguments as above, we assume that $m_p(\xi)\leq1$ and $z\in[m_p(\mu), 1]\cap (0,1)$, where we exclude $z=1$ due to Lemma \ref{lem:LDPBetaExtended-Revised}. Then, by Proposition \ref{prop:LDPKimRamanan} and Lemma \ref{lem:LDPBetaExtended-Revised}, we get 
\begin{align*}
\mathcal{I}_2(\mu) &= \inf_{\xi(z^{-1/p} \, \cdot \,)=\mu(\cdot)}\Big[H(\xi\|\bN_p) + (1 -m_p(\xi))  -{1\over p}\log {pz} - \alpha \log {{1 -z}\over \alpha} \\
& \qquad \qquad \qquad \qquad - \Big({1\over p} + \alpha\Big) \log \Big({1\over p} + \alpha\Big) - c_{(1-\vartheta)}  \Big]\\
&= \inf_{\xi(z^{-1/p} \, \cdot \,)=\mu(\cdot)}\Big[ \int_\R \log\frac{\xi(\dint x)}{\bN_p(\dint x)}\xi(\dint x) + (1 -z^{-1}m_p(\mu)) -{1\over p}\log (pz) - \alpha \log {{1 -z}\over \alpha} \Big]  \\
& \qquad \qquad \qquad \qquad   - \Big({1\over p} + \alpha\Big) \log \Big({1\over p} + \alpha\Big) - c_{(1-\vartheta)}.
\end{align*}
The change of variables $y=z^{1/p}x$ as in Case 2 then gives   
 \begin{align*}
\mathcal{I}_2(\mu)  &= \inf_{z\in [m_p(\mu),1] \cap (0,1)}\Big[ \int_\R \log\frac{ \mu(\dint y)}{\bN_{p, z}(\dint y)} \mu(\dint y) + (1 -z^{-1}m_p(\mu)) -{1\over p}\log(pz) - \alpha \log {{1 -z}\over \alpha} \Big]\\
&\qquad \qquad \qquad \qquad    - \Big({1\over p} + \alpha\Big) \log \Big({1\over p} + \alpha\Big) - c_{(1-\vartheta)}.
\end{align*}
Using now the argument from \eqref{eq:RN-Derivative} it follows that
\begin{align*}
\mathcal{I}_2(\mu)  &= \inf_{z\in [m_p(\mu),1] \cap (0,1)}\Big[H(\mu\|\bN_p) +  (z^{-1}-1) m_p(\mu) +  \frac{1}{p}\log(z) + (1 -z^{-1}m_p(\mu)) \\
& \qquad \qquad \qquad \qquad   -{1\over p}\log (pz) - \alpha \log {{1 -z}\over \alpha} \Big] - \Big({1\over p} + \alpha\Big) \log \Big({1\over p} + \alpha\Big) - c_{(1-\vartheta)}\\
&= H(\mu\|\bN_p) +  (1 - m_p(\mu))  -{1\over p}\log(p) + \alpha \log (\alpha) - \Big({1\over p} + \alpha\Big) \log \Big({1\over p} + \alpha\Big) - c_{(1-\vartheta)}\\
&\quad + \inf_{z\in [m_p(\mu),1] \cap (0,1)} \Big[ - \alpha \log (1 -z) \Big]\\
&= \mathcal{I}_{\rm cone}(\mu) +{1\over p}\log\Big({1\over p}\Big) - \Big({1\over p} + \alpha\Big) \log \Big({1\over p} + \alpha\Big) - \alpha \log \Big(\frac{1 -m_p(\mu)}{\alpha}\Big) - c_{(1-\vartheta)},
\end{align*}
where the last equality only holds for $m_p(\mu)<1$, since for $m_p(\mu)=1$, we have $z\in [m_p(\mu),1] \cap (0,1) = \emptyset$, and thus $\mathcal{I}_2(\mu)= +\infty$. Hence, $m_p(\mu)=1$ can only be permitted if $\alpha=0$.\\
\\
Thus, we have shown that $\mathcal{I}_2$ in fact coincides with $\mathcal{I}_{\textup{emp}}$ as given in Theorem \ref{thm:LDPEmpMeasure}.
\end{proof}

\begin{example} \label{exp:LambdaZeroEucl} \rm
If $\vartheta=0$, we get the large deviation behavior of the empirical measure of a random vector distributed according to some beta-type distribution $\Psi_{f,n}\bU_{n,p,f}$ as discussed in Example \ref{ex:Beta} for $f\equiv 1$. Note that this could be any distribution $\vartheta_n\bC_{n,p} + (1-\vartheta_n)\Psi_{f,n}\bU_{n,p,f}$ with $\vartheta_n \to \vartheta =0$. Since $\vartheta$ only influences the rate function via $c_{(1-\vartheta)}$ and $c_{(1-\vartheta)}=0$ for $\vartheta\in [0,1)$, any distribution $\vartheta_n\bC_{n,p} + (1-\vartheta_n)\Psi_{f,n}\bU_{n,p,f}$ with $\vartheta_n \to \vartheta \in [0,1)$ exhibits the same large deviation behavior, i.e., shares the same universal rate function for the sequence $(\mu_n)_{n\in\N}$ of corresponding empirical measures 
\begin{align*}
\mathcal{I}_{\textup{emp}}(\mu) = \begin{cases}
\parbox{11cm}{$\displaystyle \mathcal{I}_{\textup{cone}}(\mu)$}&:\parbox{2cm}{$ m_p(\mu) \le 1$\\$\alpha=0$}\\
\parbox{11cm}{$\displaystyle \mathcal{I}_{\textup{cone}}(\mu) + {1 \over p}\log{1 \over p} - \Big({1\over p} + \alpha\Big) \log \Big({1\over p} + \alpha\Big) -\alpha \log \Big({{1-m_p(\mu)}\over \alpha}\Big)$}&:\parbox{2cm}{$ m_p(\mu) < 1$\\$\alpha >0$}\\
+\vphantom{\int\limits^{1}} \infty &:otherwise.
\end{cases}
\end{align*} 
\end{example}
%
%
\section{Application to large deviations: matrix $p$-balls}\label{sec:ApplicationMat}
In this section, we want to use the tools acquired in the previous sections to analyse the large deviation behaviors of random matrices in $\B_{p,\beta}^{n,\mathscr{H}}$ and $\B_{p,\beta}^{n,{\mathscr{M}}}$ distributed according to $\bP_{n,p,\bW,\beta}^{\mathscr{H}}$ and $\bP_{n,p,\bW,\beta}^{\mathscr{M}}$, respectively. We will use the probabilistic representations from Theorem \ref{thm:EvDistr} and Theorem \ref{thm:SvDistr} about the eigenvalue and singular value distributions together with further LDP-results for their $p$-norm component in the spirit of Lemma \ref{lem:LDPBetaExtended-Revised} to derive large deviation principles for the self-adjoint and non-self-adjoint matrix $p$-balls.%
\subsection{LDPs for the empirical spectral measure of random matrices in $\B^{n, \mathscr{H}}_{p,\beta}$} \label{subsec:LDPMatrixSA}
In the case of the matrix $p$-balls, our goal is to derive an LDP for the so called empirical spectral measure of a random matrix in $\B^{n, \mathscr{H}}_{p,\beta}$. For a self-adjoint random matrix $Z \in \mathscr{H}_n(\mathbb{F}_\beta)$ with eigenvalues $\lambda_1(Z)\le \ldots \le \lambda_n(Z)$ the empirical spectral measure is defined as the random measure $\nu_n:={1\over n}\sum_{i=1}^n\delta_{\lambda_i(Z)}$, i.e., the empirical measure with respect to the eigenvalues. We will again consider the suitably scaled version $\mu_n:={1\over n}\sum_{i=1}^n\delta_{n^{1/p}\lambda_i(Z)}$ and refer to it as the empirical spectral measure of the random matrix $Z$.

In \cite{KPTSanov} a large deviation principle for the empirical spectral measure of random matrices chosen according to either $\bU_{n,p,\beta}^{\mathscr{H}}$ or $\bC_{n,p,\beta}^{\mathscr{H}}$ was proven. In this section, we generalize this result by proving a large deviation principle for random matrices  chosen according to one of the more general distributions $\bP_{n,p,\bW,\beta}^{\mathscr{H}} :=\bW(\{0\})\bC_{n,p,\beta}^{\mathscr{H}}+\Psi^{\mathscr{H}}\bU_{n,p,\beta}^{\mathscr{H}} \text{ on } \B_{p,\beta}^{n,\mathscr{H}}$ introduced in Section \ref{sec:EigenSingularDistr}. As in the previous section, we consider for $\bW$ distributions $\bW_n:=\vartheta_n\delta_0 +(1-\vartheta_n)\textbf{G}(\alpha_n, 1)$ with weight sequence $(\vartheta_n)_{n\in\N}$ in $[0,1]$ and parameter sequence $(\alpha_n)_{n\in\N}$ in $[0,\infty)$, and thus write $\bP_{n,p,\bW_n,\beta}^{\mathscr{H}}$ and $\Psi^{\mathscr{H}}_n$ (and $\Psi_{f,n}$ in the Euclidean representation). 
\begin{thm}\label{thm:LDPEmpSpecMeasureSA}
Let $0<p<\infty$, $\beta\in\{1,2,4\}$, and let $(\vartheta_n)_{n\in\N}$ be a sequence in $[0,1]$ with $\lim\limits_{n\to\infty} \vartheta_n = \vartheta \in [0,1]$ and denote by $k(\vartheta) \ge 2$ the smallest number such that $\lim\limits_{n\to\infty} n^{-k(\vartheta)} \, |\log(1-\vartheta_n)| < +\infty$. Further, let $(\alpha_n)_{n\in\N}$ be a positive, real sequence such that $\lim\limits_{n\to\infty} \alpha_n n^{-2} =\alpha \in [0,\infty)$. For each $n\in\N$ let $\bW_n=\vartheta_n\delta_0 +(1-\vartheta_n)\textbf{G}(\alpha_n, 1)$, and let $Z^{(n)}$ be a random matrix in $\B_{p,\beta}^{n, \mathscr{H}}$ chosen according to the distribution $\bP_{n,p,\bW_n,\beta}^{\mathscr{H}}$. Then the sequence of random probability measures $\mu_n:={1\over n}\sum\limits_{i=1}^n\delta_{n^{1/p}\lambda_i(Z^{(n)})}$ satisfies a large deviation principle on $\mathcal{M}(\R)$ with speed $n^2$ and good rate function
\begin{align*}
\mathcal{I}_{\textup{emp}}^{\mathscr{H}} (\mu) = \begin{cases}
\displaystyle \mathcal{I}_{\textup{cone}}^{\mathscr{H}} (\mu) - c_{(1-\vartheta)}^{\mathscr{H}} &: \parbox{5cm}{$m_p(\mu)\le1, k(\vartheta) \ge 2, \alpha =0$}\\
\parbox{9cm}{$ \displaystyle  \mathcal{I}_{\rm cone}^{\mathscr{H}} (\mu) +{\beta\over 2p}\log\Big({\beta\over 2p}\Big) - \Big({\beta\over 2p} + \alpha\Big) \log \Big({\beta\over 2p} + \alpha\Big) \vphantom{\int\limits_0^1}$\\$- \alpha \log \Big(\frac{1 -m_p(\mu)}{\alpha}\Big) - c_{(1-\vartheta)}^{\mathscr{H}}  \vphantom{\int\limits_0}$} &: \parbox{5cm}{$m_p(\mu)<1, k(\vartheta)=2,  \alpha>0$}\\
+\infty &: otherwise,
\end{cases}
\end{align*}
where 
$$
\mathcal{I}_{\textup{cone}}^{\mathscr{H}}(\mu) = \begin{cases}
\displaystyle -{\beta\over 2}\int_{\R}\int_\R\log|x-y|\,\mu(\dint x)\mu(\dint y)+{\beta\over 2p}\log\Big({\sqrt{\pi}\,p\,\Gamma({p\over 2})\over 2^p\,\sqrt{e}\,\Gamma({p+1\over 2})}\Big) &: m_p(\mu) \leq 1\\
+\infty &: otherwise,
\end{cases}
$$
and 
$$ 
c_{(1-\vartheta)}^{\mathscr{H}} := \begin{cases} 
\lim\limits_{n\to\infty} n^{-2} \, \log(1-\vartheta_n) \vphantom{\int\limits_{0}}&: k(\vartheta) = 2\\
0 &: k(\vartheta) > 2.
\end{cases}
$$
\end{thm}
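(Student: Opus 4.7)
The plan is to follow the strategy of the proof of Theorem \ref{thm:LDPEmpMeasure} in the Euclidean case, using the probabilistic representation for the eigenvalues established in Theorem \ref{thm:EvDistr} to reduce the problem to a product of two simpler large deviation principles, one for a directional component and one for a $p$-norm component. Concretely, since a uniform random permutation of the coordinates does not affect the empirical measure, Theorem \ref{thm:EvDistr} gives the equality in distribution
\begin{equation*}
\mu_n \;\overset{d}{=}\; \frac{1}{n}\sum_{i=1}^n \delta_{n^{1/p}(B^{(n)})^{1/p} X_i^{(n)}/\|X^{(n)}\|_p},
\end{equation*}
where $X^{(n)}=(X_1^{(n)},\ldots,X_n^{(n)})$ has density proportional to $\Delta_\beta^c(x)\,e^{-\|x\|_p^p}$ on $\R^n$, $W^{(n)}\sim \bW_n$ is independent of $X^{(n)}$, and $B^{(n)}:=\|X^{(n)}\|_p^p/(\|X^{(n)}\|_p^p+W^{(n)})$. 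By Lemma \ref{lem:DistributionConeMeasure} (i) (applied with $f=\Delta_\beta^c$) the directional component $\xi_n:=\frac{1}{n}\sum_{i=1}^n \delta_{n^{1/p}X_i^{(n)}/\|X^{(n)}\|_p}$ and the norm component $B^{(n)}$ are independent, which will allow me to use Proposition \ref{prop:ProdLDP}.

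The first building block is an LDP for the directional sequence $(\xi_n)_{n\in\N}$: since $X^{(n)}/\|X^{(n)}\|_p$ has distribution $\bC_{n,p,\Delta_\beta^c}$, it is precisely the (unordered) eigenvalue vector of a random matrix on $\SSS_{p,\beta}^{n-1,\mathscr{H}}$ distributed according to $\bC_{n,p,\beta}^{\mathscr{H}}$. The corresponding Sanov-type LDP at speed $n^2$ with good rate function $\mathcal{I}_{\textup{cone}}^{\mathscr{H}}$ is exactly the one of Kabluchko, Prochno and Th\"ale \cite{KPTSanov}, which I will cite. The second building block is the analogue of Lemma \ref{lem:LDPBetaExtended-Revised} for $(B^{(n)})_{n\in\N}$ at speed $n^2$. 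By Remark \ref{rmk:NormDistrSA} one has $B^{(n)}\sim \vartheta_n\delta_1+(1-\vartheta_n)\textbf{B}(\tfrac{\beta n^2}{2p}-\tfrac{\beta n}{2p}+\tfrac{n}{p},\alpha_n)$, and I plan to apply the G\"artner--Ellis theorem (Proposition \ref{prop:GaertnerEllis}) to the rescaled logarithmic moment generating function $n^{-2}\Lambda_{B^{(n)}}(n^2 t)$. After the change of variables $y=1-x$ in the Beta integral, Stirling's formula in the regime $\alpha_n\sim \alpha n^2$ gives the asymptotic factor
\begin{equation*}
\lim_{n\to\infty}\frac{1}{n^2}\log\frac{1}{B(\tfrac{\beta n^2}{2p}+O(n),\alpha_n)} = -\tfrac{\beta}{2p}\log\tfrac{\beta}{2p}-\alpha\log\alpha+\bigl(\tfrac{\beta}{2p}+\alpha\bigr)\log\bigl(\tfrac{\beta}{2p}+\alpha\bigr),
\end{equation*}
while a Laplace-principle analysis of the remaining integrand (exactly as in Lemma \ref{lem:LDPBetaExtended-Revised}, with the exponent $\frac{1}{p}\log(1-y)$ replaced by $\frac{\beta}{2p}\log(1-y)$) yields a Legendre--Fenchel transform that one can invert using the involution property. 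This will produce an LDP for $(B^{(n)})_{n\in\N}$ at speed $n^2$ with good rate function
\begin{equation*}
\mathcal{I}_{\rm beta}^{\mathscr{H}}(x) = -\tfrac{\beta}{2p}\log\bigl(\tfrac{2p}{\beta}x\bigr)-\alpha\log\tfrac{1-x}{\alpha}-\bigl(\tfrac{\beta}{2p}+\alpha\bigr)\log\bigl(\tfrac{\beta}{2p}+\alpha\bigr)-c^{\mathscr{H}}_{(1-\vartheta)},
\end{equation*}
on the appropriate effective domain, with the same case distinction on $k(\vartheta)$ and on whether $\alpha=0$ or $\alpha>0$ as in Lemma \ref{lem:LDPBetaExtended-Revised}.

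Having both LDPs at the common speed $n^2$, I apply Proposition \ref{prop:ProdLDP} to obtain a joint LDP for $(\xi_n,B^{(n)})_{n\in\N}$ on $\mathcal{M}(\R)\times[0,1]$ with rate function $\mathcal{I}_{\textup{cone}}^{\mathscr{H}}(\xi)+\mathcal{I}_{\rm beta}^{\mathscr{H}}(z)$. The continuous contraction $F_p:\mathcal{M}(\R)\times(0,1]\to\mathcal{M}(\R)$, $(\xi,z)\mapsto \xi(z^{-1/p}\,\cdot\,)$, sends $(\xi_n,B^{(n)})$ to $\mu_n$, and Proposition \ref{prop:ContrPrinc} then yields the sought LDP with good rate function
\begin{equation*}
\mathcal{I}^{\mathscr{H}}_{\textup{emp}}(\mu) = \inf_{\xi(z^{-1/p}\,\cdot\,)=\mu(\,\cdot\,)}\bigl[\mathcal{I}_{\textup{cone}}^{\mathscr{H}}(\xi)+\mathcal{I}_{\rm beta}^{\mathscr{H}}(z)\bigr].
\end{equation*}
It remains to evaluate this infimum. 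Using $m_p(\xi)=z^{-1}m_p(\mu)$ and the transformation behavior of the logarithmic energy functional under scaling, namely
\begin{equation*}
-\tfrac{\beta}{2}\int\!\!\int \log|x-y|\,\xi(\dint x)\xi(\dint y) = -\tfrac{\beta}{2}\int\!\!\int \log|x-y|\,\mu(\dint x)\mu(\dint y)+\tfrac{\beta}{2p}\log z,
\end{equation*}
I can follow Cases 1--3 from the proof of Theorem \ref{thm:LDPEmpMeasure} verbatim, optimizing in $z\in[m_p(\mu),1]\cap(0,1]$; the $z$-dependent linear and logarithmic terms combine exactly so that the infimum is attained at $z=m_p(\mu)$ (when $\alpha>0$) or trivially at $z=1$ (when $\alpha=0$), producing the stated piecewise formula for $\mathcal{I}^{\mathscr{H}}_{\textup{emp}}$.

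The main technical obstacle is the LDP for $(B^{(n)})_{n\in\N}$ at speed $n^2$: the first parameter of the Beta distribution is quadratic in $n$ while its second parameter is $\alpha_n\sim\alpha n^2$, so the Stirling asymptotics and the subsequent Laplace analysis have to be carried out carefully and the correction term $c^{\mathscr{H}}_{(1-\vartheta)}$ must be tracked through the limit. Once this is done, the transition from $\mathcal{I}_{\rm beta}^{\mathscr{H}}$ to $\mathcal{I}_{\textup{emp}}^{\mathscr{H}}$ via the contraction principle is essentially a bookkeeping exercise parallel to the Euclidean case.
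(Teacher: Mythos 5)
Your proposal follows essentially the same route as the paper: apply Theorem \ref{thm:EvDistr} to transfer the problem to the Euclidean representation, note that the random permutation is irrelevant for the empirical measure, obtain independence of the directional and $p$-norm components from Lemma \ref{lem:DistributionConeMeasure} (i), invoke the Sanov-type LDP of Kabluchko--Prochno--Th\"ale for the directional part, derive an LDP at speed $n^2$ for $B^{(n)}$ via G\"artner--Ellis with Stirling and Laplace estimates (this is precisely the paper's Lemma \ref{lem:LDPBetaExtendedMat1}), then combine via Proposition \ref{prop:ProdLDP} and contract via $(\xi,z)\mapsto\xi(z^{-1/p}\,\cdot\,)$. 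Your explicit treatment of the final infimum — using the scaling identity for the logarithmic energy functional, $-\frac{\beta}{2}\iint\log|x-y|\,\xi\,\xi = -\frac{\beta}{2}\iint\log|x-y|\,\mu\,\mu + \frac{\beta}{2p}\log z$, so that the $z$-dependent terms reduce to $-\alpha\log\frac{1-z}{\alpha}$, minimized at $z=m_p(\mu)$ when $\alpha>0$ and $z$-independent when $\alpha=0$ — is correct and actually more detailed than the paper, which simply states that the case analysis is "almost the same as for the Euclidean case" and omits it.
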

The proof of this result is rather similar to that of Theorem \ref{thm:LDPEmpMeasure}, with the main difference that we will need to use the probabilistic representation from Theorem \ref{thm:EvDistr}, which is weighted by the repulsion factor $\Delta_\beta$ of the eigenvalues ($\Delta_\beta^c$ with normalizing constants). We again split that probabilistic representation into two components, one directional component with distribution $\bC_{n,p,\beta}^{\mathscr{H}}$ on the matrix $p$-ball and the other reflecting the $p$-radial component. The main difference will be that the degree of homogeneity $m$ of the weight function $f$ is non-zero if $f =\Delta^c_\beta$, but $m={\beta n(n-1)\over2}$. Therefore, as outlined in Remark \ref{rmk:NormDistrSA}, the first parameter of the beta distribution involved in the distribution of the $p$-radial component (compare with Lemma \ref{lem:LDPBetaExtended-Revised}) will have a different limit behavior, affecting both the speed (via the order of convergence) and the rate function (via the limit).\\
We now present two results outlining the large deviation behavior of the aforementioned two components of the probabilistic representation of a random matrix with distribution $\bP_{n,p,\bW_n,\beta}^{\mathscr{H}}$. One will do so for the empirical spectral measure of random matrices with distribution $\bC_{n,p,\beta}^{\mathscr{H}}$ on $\B^{n, \mathscr{H}}_{p,\beta}$ and the other for the $p$-norm of the probabilistic representation.  We start with the latter. 
\begin{lemma}\label{lem:LDPBetaExtendedMat1}
Let $0<p<\infty$, $\beta\in\{1,2,4\}$, and let $(\vartheta_n)_{n\in\N}$ be a sequence in $[0,1]$ with $\lim\limits_{n\to\infty} \vartheta_n = \vartheta \in [0,1]$ and denote by $k(\vartheta) \ge 2$ the smallest number such that $\lim\limits_{n\to\infty} n^{-k(\vartheta)} \, |\log(1-\vartheta_n)| < +\infty$. Also let $(\alpha_n)_{n\in\N}$ be a positive, real sequences such that $\lim\limits_{n\to\infty}\alpha_n n^{-2}=\alpha \in [0,\infty)$.  For each $n\in\N$ let $X^{(n)}=(X_1^{(n)},\ldots,X_n^{(n)})$ be a random vector with density $C_{n,p,\Delta^c_\beta} \,e^{-\|x\|_p^p}\, \Delta^c_\beta(x)$, $x\in\R^n$, with $\Delta^c_\beta$ defined as in Theorem \ref{thm:EvDistr}. Independently of the sequence $(X^{(n)})_{n\in\N}$, let $(W^{(n)})_{n\in\N}$ be a sequence of random variables with $W^{(n)} \sim \bW_n= \vartheta_n\delta_0 + (1-\vartheta_n)\textbf{G}(\alpha_n,1)$. Then the sequence of random variables $(B^{(n)})_{n\in\N}$ with $B^{(n)}:={\|X^{(n)}\|_p^p\over\|X^{(n)}\|_p^p+W^{(n)}}$ satisfies a large deviation principle on $[0,\infty)$ with speed $n^2$ and good rate function
	$$
\mathcal{I}_{\rm beta}^{\mathscr{H}}(x) = \begin{cases}
0 &: k(\vartheta) > 2,  x =1 \vphantom{\int\limits_{0}} \\
-{\beta\over 2p} \log(x) - c_{(1- \vartheta)}^{\mathscr{H}}(x)&: \parbox{3cm}{$k(\vartheta) = 2,  \alpha =0,$\\ $x \in (0,1] \vphantom{\int\limits_{0}}$} \\
\parbox{10cm}{$-{\beta\over 2p} \log(\frac{2xp}{\beta}) - \alpha \log\Big(\frac{1-x}{\alpha}\Big) - \Big({\beta\over 2p} + \alpha\Big) \log\Big({\beta\over 2p} + \alpha \Big) - c_{(1- \vartheta)}^{\mathscr{H}}(x)$} &:\parbox{3cm}{$k(\vartheta) = 2,  \alpha >0,$\\$x \in (0,1) \vphantom{\int\limits_{0}}$}\\
+\infty&: otherwise,
\end{cases}
$$
where 
$$ 
c_{(1-\vartheta)}^{\mathscr{H}} := \begin{cases} 
\lim\limits_{n\to\infty} n^{-2} \, \log(1-\vartheta_n) \vphantom{\int\limits_{0}}&: k(\vartheta) = 2\\
0 &: k(\vartheta) > 2.
\end{cases}
$$
\end{lemma}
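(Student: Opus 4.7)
The strategy mirrors that of Lemma \ref{lem:LDPBetaExtended-Revised}, but with everything rescaled to account for the fact that $\Delta_\beta^c$ is homogeneous of degree $m = \frac{\beta n(n-1)}{2}$, which grows quadratically in $n$. First, by Remark \ref{rmk:NormDistrSA} applied with $f = \Delta_\beta^c$, the random variable $B^{(n)}$ is distributed as $\vartheta_n\delta_1 + (1-\vartheta_n)\bB(\gamma_n,\alpha_n)$, where $\gamma_n := \frac{\beta n^2}{2p} - \frac{\beta n}{2p} + \frac{n}{p}$. The natural speed for the LDP is therefore $n^2$, consistent with the assumption $\alpha_n/n^2 \to \alpha$ and with the definition of $k(\vartheta) \ge 2$ capturing convergence of $(1-\vartheta_n)$.

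The plan is to apply the Gärtner--Ellis theorem (Proposition \ref{prop:GaertnerEllis}) at $k=2$ by computing
\[
\Lambda(t) = \lim_{n\to\infty} \frac{1}{n^2}\log \E\bigl[e^{n^2 t B^{(n)}}\bigr] = t + \lim_{n\to\infty}\frac{1}{n^2}\log\biggl[\vartheta_n + (1-\vartheta_n)\frac{1}{B(\gamma_n,\alpha_n)}\int_0^1 e^{n^2 \varphi_n(y)}\,\dint y\biggr],
\]
after the change of variables $y = 1-x$ inside the beta density, where $\varphi_n(y) = -ty + \frac{\gamma_n-1}{n^2}\log(1-y) + \frac{\alpha_n-1}{n^2}\log(y)$. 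Since $\gamma_n/n^2 \to \beta/(2p)$ and $\alpha_n/n^2 \to \alpha$, one can sandwich $\varphi_n$ between perturbed versions $\tilde\varrho_{\pm\epsilon,t}(y) = -ty + (\frac{\beta}{2p}\pm\epsilon)\log(1-y) + (\alpha\pm\epsilon)\log(y)$ for all large $n$, exactly as in the proof of Lemma \ref{lem:LDPBetaExtended-Revised}.

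The asymptotics of the prefactor $(1-\vartheta_n)/B(\gamma_n,\alpha_n)$ under speed $n^2$ is obtained via Stirling's formula: when $\alpha > 0$ this yields
\[
\lim_{n\to\infty}\frac{1}{n^2}\log\frac{1-\vartheta_n}{B(\gamma_n,\alpha_n)} = c_{(1-\vartheta)}^{\mathscr H} - \frac{\beta}{2p}\log\frac{\beta}{2p} - \alpha\log\alpha + \Bigl(\frac{\beta}{2p}+\alpha\Bigr)\log\Bigl(\frac{\beta}{2p}+\alpha\Bigr),
\]
while for $\alpha = 0$ (whether $\alpha_n$ diverges subquadratically or stays bounded) the same calculation gives simply $c_{(1-\vartheta)}^{\mathscr H}$. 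Combining this with the adapted Laplace principle \eqref{eq:ExpLaplace2} (Remark \ref{rmk:AdaptationLaplace}) when the maximizer of $\tilde\varrho_{\pm\epsilon,t}$ lies in the interior of $(0,1)$, and letting $\epsilon \downarrow 0$, one identifies $\Lambda(t)$ as $t + c_{(1-\vartheta)}^{\mathscr H} + (\text{Stirling term}) + \tilde\Psi^*(-t)$, with $\tilde\Psi(y) = -\frac{\beta}{2p}\log(1-y) - \alpha\log(y)$. The Legendre--Fenchel involution together with the change of variables $z = 1-x$ then delivers the stated rate function in the case $k(\vartheta)=2$, $\alpha>0$; the case $\alpha=0$ is handled identically except that the upper bound, where the maximizer escapes to the boundary $y=0$, requires Breitung's asymptotic (Proposition \ref{prop:Breitung}, via \eqref{eq:ExpLaplaceBreitung}) instead of the Laplace principle, producing the logarithmic rate function $-\frac{\beta}{2p}\log(x) - c_{(1-\vartheta)}^{\mathscr H}$. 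Finally, the case $k(\vartheta) > 2$ is immediate, because then $(1-\vartheta_n)/B(\gamma_n,\alpha_n) \to 0$ even on the scale $n^2$, so $\Lambda(t) = t$ and $\Lambda^*$ concentrates at $x=1$.

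The main technical point is the bookkeeping of the Stirling expansion for $B(\gamma_n,\alpha_n)$ under the speed $n^2$ when $\alpha_n$ grows like $n^2$, together with verifying that the subleading linear term $-\frac{\beta n}{2p} + \frac{n}{p}$ in $\gamma_n$ contributes negligibly to $n^{-2}\log(1/B(\gamma_n,\alpha_n))$; once this is in place, the remainder of the argument is a clean transcription of the proof of Lemma \ref{lem:LDPBetaExtended-Revised}, with $\frac{1}{p}$ replaced by $\frac{\beta}{2p}$ throughout the rate function.
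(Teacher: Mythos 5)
Your proposal is correct and follows essentially the same route as the paper: the paper itself remarks that this lemma is proven exactly as Lemma~\ref{lem:LDPBetaExtended-Revised}, using the beta representation from Remark~\ref{rmk:NormDistrSA} with first parameter $\gamma_n = (n+m)/p$ and $m = \tfrac{\beta n(n-1)}{2}$, so that $\gamma_n/n^2 \to \beta/(2p)$ dictates both the speed $n^2$ and the replacement of $1/p$ by $\beta/(2p)$. Your plan fills in the Gärtner--Ellis, Stirling, Laplace/Breitung, and Legendre--Fenchel steps in precisely the order the paper's earlier proof uses, and correctly notes that the subleading linear terms in $\gamma_n$ vanish at scale $n^{-2}$.
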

This is proven in the same way as Lemma \ref{lem:LDPBetaExtended-Revised} with only a few differences. Since we are dealing with matrix $p$-balls here, the weight function is $\Delta^c_\beta$, which is homogeneous of degree  $m={\beta n(n-1)\over2}$. We use the probabilistic representation of the $\ell_p^n$-norm of the eigenvalue-vector via the distributional convex combination given in Remark \ref{rmk:NormDistrSA}. We have seen in the proof of Lemma \ref{lem:LDPBetaExtended-Revised} that the LDP of the latter is heavily dependent on the limits and orders of convergence of the involved parameter sequences. It holds for the first parameter of the involved beta distribution from Remark \ref{rmk:NormDistrSA} that $\lim_{n\to\infty} \frac{n+m}{p} n^{-2} = \frac{\beta}{2p}$. This explains the appearance of $n^2$ instead of $n$ for the speed and the factor $\beta \over 2p$ instead of $1 \over p$ in the rate function. \\
The second lemma is a large deviation principle for the sequence of empirical spectral measures of a random matrix in $\B_{p,\beta}^{n, \mathscr{H}}$ with distribution $\bC_{n,p,\beta}^{\mathscr{H}}$ from \cite[Theorem 1.1]{KPTSanov}.
\begin{lemma}\label{lem:LDPfromSanovPaper}
Let $0<p<\infty$, $\beta\in\{1,2,4\}$ and $n\in\N$. Further, let $Z^{(n)}$ be a random matrix in $\B_{p,\beta}^{n, \mathscr{H}}$ with distribution $\bC_{n,p,\beta}^{\mathscr{H}}$ and eigenvalues $\lambda_i(Z^{(n)})$, $i\in\{1,\ldots,n\}$. Then the sequence of random probability measures
$
\mu_n := {1\over n}\sum_{i=1}^n\delta_{n^{1/p}\lambda_i(Z^{(n)})}
$
satisfies a large deviation principle on $\mathcal{M}(\R)$ with speed $n^2$ and good rate function
$$
\mathcal{I}_{\textup{cone}}^{\mathscr{H}}(\mu) = \begin{cases}
\displaystyle -{\beta\over 2}\int_{\R}\int_\R\log|x-y|\,\mu(\dint x)\mu(\dint y)+{\beta\over 2p}\log\Big({\sqrt{\pi}\,p\,\Gamma({p\over 2})\over 2^p\,\sqrt{e}\,\Gamma({p+1\over 2})}\Big) &: m_p(\mu) \leq 1\\
+\infty &: otherwise.
\end{cases}
$$
\end{lemma}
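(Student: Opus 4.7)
The plan is to reduce the matrix-sphere LDP to a classical large-deviation principle for a $\beta$-log-gas with confining potential $|y|^p$, exploiting the probabilistic representation of Theorem~\ref{thm:EvDistr}.

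Taking $\bW = \delta_0$ in Theorem~\ref{thm:EvDistr} gives $\bP_{n,p,\bW,\beta}^{\mathscr H} = \bC_{n,p,\beta}^{\mathscr H}$, and---discarding the harmless uniform random permutation of coordinates---identifies $\lambda(Z^{(n)})$ in distribution with $X^{(n)}/\|X^{(n)}\|_p$, where $X^{(n)}$ has density proportional to $\Delta_\beta(x)\,e^{-\|x\|_p^p}$ on $\R^n$. The rescaling $Y_i^{(n)} := n^{-1/p}X_i^{(n)}$ turns the joint density of $(Y_1^{(n)},\ldots,Y_n^{(n)})$ into one proportional to $\prod_{i<j}|Y_i-Y_j|^\beta\,\exp(-n\sum_i|Y_i|^p)$, i.e.\ a one-dimensional $\beta$-ensemble with potential $V(y)=|y|^p$. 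Classical random-matrix LDP results for such log-gases (Ben Arous--Guionnet; see \cite[Section~2.6]{AGZ2010} and its extensions to $\beta\in\{1,4\}$ and polynomial-growth potentials) yield that $\tilde\nu_n := \frac{1}{n}\sum_i\delta_{Y_i^{(n)}}$ satisfies an LDP on $\mathcal{M}(\R)$ at speed $n^2$ with good rate function
\[
J(\nu) \;=\; \int|y|^p\,\nu(\dint y) \;-\; \frac{\beta}{2}\iint\log|y-z|\,\nu(\dint y)\,\nu(\dint z) \;-\; F_{\beta,p},
\]
where $F_{\beta,p}$ is the infimum of the bracketed unconstrained functional.

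Since a direct computation gives $\|X^{(n)}\|_p = n^{2/p}\,m_p(\tilde\nu_n)^{1/p}$, we see that $\mu_n$ is the pushforward of $\tilde\nu_n$ under the random dilation $y\mapsto y/m_p(\tilde\nu_n)^{1/p}$. I would transfer the LDP from $\tilde\nu_n$ to $\mu_n$ via a contraction-type argument: on $\{m_p(\mu)=1\}$ one parametrizes preimages as $\tilde\nu_s(A)=\mu(A/s)$, so that
\[
J(\tilde\nu_s) \;=\; s^p\,m_p(\mu) \;-\; \beta\log s \;-\; \tfrac{\beta}{2}\iint\log|x-z|\,d\mu(x)\,d\mu(z) \;-\; F_{\beta,p},
\]
and the Euler--Lagrange condition $ps^{p-1}m_p(\mu)=\beta/s$ gives $s^p=\beta/(p\,m_p(\mu))$; evaluating at the optimum on $m_p(\mu)=1$ cancels the potential term cleanly and leaves a rate function depending only on the logarithmic energy of $\mu$. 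Extending to $\{m_p(\mu)<1\}$ is more delicate, since $m_p(\mu_n)=1$ holds deterministically and such $\mu$ can only arise as weak limits featuring escape of mass to infinity. I would handle this by an exponential-tightness / one-point-compactification argument in the spirit of the Euclidean-sphere treatment in \cite[Proposition~3.6]{KimRamanan}, showing that, to exponential order $n^2$, escape of a fraction $1-m_p(\mu)$ of the mass comes for free and the rate function extends unchanged to all of $\{m_p(\mu)\leq 1\}$.

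The main obstacle is the explicit identification of the additive constant. Solving the Frostman/Euler--Lagrange equation for the minimizer of $\nu\mapsto\int|y|^p\,d\nu - \frac{\beta}{2}\iint\log|y-z|\,d\nu(y)\,d\nu(z)$ yields the equilibrium measure in closed form (a beta-type density on a symmetric interval whose endpoints depend on $p$ and $\beta$), from which $F_{\beta,p}$ can be computed explicitly. Combining this value with the Stirling asymptotics of the Gamma factors contained in $C_{n,p,\Delta_\beta^c}$, $c_{n,\beta}^{\mathscr H}$ and $\vol_{\beta,n}(\B_{p,\beta}^{n,\mathscr H})$ (the latter known from \cite{KPTEnsembles,KPTVolumeRatio}) is expected to produce, after substantial cancellation, the clean closed form $\frac{\beta}{2p}\log\bigl(\frac{\sqrt{\pi}\,p\,\Gamma(p/2)}{2^p\sqrt{e}\,\Gamma((p+1)/2)}\bigr)$ appearing in $\mathcal{I}_{\textup{cone}}^{\mathscr H}$. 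This bookkeeping is by far the most delicate step.
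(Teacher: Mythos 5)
The paper does not prove this lemma at all: it is quoted verbatim from \cite[Theorem 1.1]{KPTSanov}, so there is no internal proof to compare against. Your sketch is, in spirit, a reconstruction of how that cited result is actually obtained (probabilistic representation of the eigenvalues as a $\Delta_\beta$-weighted generalized Gaussian vector normalized to the sphere, reduction to a $\beta$-log-gas with potential $|y|^p$, Ben Arous--Guionnet, then a transfer to the normalized empirical measure), so the overall route is the right one. Two local slips are worth fixing: since $\mu$ is a probability measure, $\iint\log|s(x-w)|\,\mu(\dint x)\mu(\dint w)=\log s+\iint\log|x-w|\,\mu(\dint x)\mu(\dint w)$, so the dilation contributes $-\tfrac{\beta}{2}\log s$ to $J(\tilde\nu_s)$, not $-\beta\log s$; the stationarity condition is then $s^p=\beta/(2p\,m_p(\mu))$, and the factor of $2$ propagates into the additive constant, so your bookkeeping as written would not reproduce the stated $\tfrac{\beta}{2p}\log(\cdots)$ term.

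The genuine gap is the transfer step. The map $\nu\mapsto\nu\bigl(m_p(\nu)^{1/p}\,\cdot\,\bigr)$ is not continuous for the weak topology on $\mathcal M(\R)$ (the $p$-th moment functional $m_p$ is only lower semicontinuous; consider $\nu_k=(1-k^{-1})\delta_0+k^{-1}\delta_k$), so Proposition \ref{prop:ContrPrinc} cannot be invoked, even on the slice $\{m_p(\mu)=1\}$. This discontinuity is not a boundary technicality to be dispatched afterwards --- it is precisely why the rate function lives on $\{m_p(\mu)\le 1\}$ rather than $\{m_p(\mu)=1\}$, and handling it (superexponential moment estimates, approximation by compactly supported measures, matching upper and lower bounds for the escaping mass) is the core of the argument in both \cite{KimRamanan} and \cite{KPTSanov}. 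You flag it, but a proof would have to carry it out. Likewise the identification of the constant is only asserted: it must be checked either by solving the constrained equilibrium problem for $V(y)=|y|^p$ (the Ullman-type measure) or, as in the cited reference, by Selberg/Mehta-integral asymptotics of the normalization $C_{n,p,\Delta_\beta}$; without one of these computations the closed form $\tfrac{\beta}{2p}\log\bigl(\tfrac{\sqrt{\pi}\,p\,\Gamma(p/2)}{2^{p}\sqrt{e}\,\Gamma((p+1)/2)}\bigr)$ remains unverified.
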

 
 \begin{proof}[Proof of Theorem \ref{thm:LDPEmpSpecMeasureSA}] Since this proof is again quite similar to that of Theorem \ref{thm:LDPEmpMeasure}, we reduce it to the essential differences. We use the probabilistic representations from Theorem \ref{thm:EvDistr}, Lemma \ref{lem:DistributionConeMeasure} (i), Proposition \ref{prop:ProbRepMixedDistributionConeUnif}, and Proposition \ref{prop:DistributionNormB} to get
$$
\mu_n:={1\over n}\sum\limits_{i=1}^n\delta_{n^{1/p}\lambda_i(Z^{(n)})} \overset{d}{=} {1\over n}\sum_{i=1}^n\delta_{n^{1/p}{X_i^{(n)}\over(\|X^{(n)}\|_p^p+W^{(n)})^{1/p}}} = {1\over n}\sum_{i=1}^n\delta_{n^{1/p}{B^{(n)}}^{1/p}{X_i^{(n)}\over\|X^{(n)}\|_p}},
$$
where $X^{(n)}$ is a random vector with density $C_{n,p,\Delta^c_\beta}\,e^{-\|x\|_p^p}\,\Delta^c_\beta(x)$, $x\in\R^n$, $W^{(n)}$ a random variable on $[0,\infty)$ with distribution $\bW_n=\vartheta_n\delta_0+(1-\vartheta_n)\textbf{G}(\alpha_n, 1)$, and $B^{(n)}={\|X^{(n)}\|_p^p\over\|X^{(n)}\|_p^p+W^{(n)}}$. Note, that while Theorem \ref{thm:EvDistr} makes a distributional statement for the randomly permutaed eigenvalue vector $\lambda_\sigma(Z)$, the above statement holds for the empirical measure of the ordered eigenvalue vector $\lambda(Z)$ as well, since we are considering the Dirac measures of its coordinates within a sum, in which the order of the summands is irrelevant. Using Lemma \ref{lem:LDPBetaExtendedMat1} and Lemma \ref{lem:LDPfromSanovPaper}, by the same arguments as in the proof of Theorem \ref{thm:LDPEmpMeasure}, we get that $(\mu_n)_{n\in\N}$ satisfies an LDP with speed $n^2$ and good rate function $\mathcal{I}_2^{\mathscr{H}}: \mathcal{M}(\R) \times (0,1] \to [0,\infty)$ given by
$$
\mathcal{I}_2^{\mathscr{H}}(\mu) = \inf_{\xi(z^{-1/p} \, \cdot \,)=\mu(\cdot)}\Big[\mathcal{I}_{\textup{cone}}^{\mathscr{H}}(\xi) + \mathcal{I}_{\rm beta}^{\mathscr{H}}(z)\Big]. 
$$
It remains to show that $\mathcal{I}_2^{\mathscr{H}}$ is just the rate function $\mathcal{I}_{\textup{emp}}^{\mathscr{H}}$ stated in the theorem. However, this is done again by a case-by-case analysis of parameter configurations $m_p(\mu) \in [0, \infty]$, $k(\vartheta) \ge 2$ and $\alpha \in [0,\infty)$, such that the rate functions $\mathcal{I}_{\textup{cone}}^{\mathscr{H}}$ and $\mathcal{I}_{\rm beta}^{\mathscr{H}}$ remain finite. Since the computations are almost the same as for the Euclidean case, we omit the details.
\end{proof}
\begin{example}\label{exp:LambdaZeroMat} \rm
Similarly as in Example \ref{exp:LambdaZeroEucl}, if we consider the case $\vartheta =0$, we get the large deviation behavior of the empirical spectral measure of a random matrix $Z^{(n)}$ distributed according to some beta-type distribution $\Psi_{n}^{\mathscr{H}}\bU_{n,p,\beta}^{\mathscr{H}}$ analogous to Example \ref{ex:Beta}. Again, the same behavior is exhibited by a multitude of distributions $\vartheta_n\bC_{n,p,\beta}^{\mathscr{H}}+ (1-\vartheta_n)\Psi_{n}^{\mathscr{H}}\bU_{n,p,\beta}^{\mathscr{H}}$ with $\vartheta_n \to \vartheta \in [0,1)$ with the associated rate function for the sequence $(\mu_n)_{n\in\N}$ of empirical spectral measures being
\begin{align*}
\mathcal{I}_{\textup{emp}}^{\mathscr{H}}(\mu) = \begin{cases}
\parbox{11cm}{$\displaystyle  \vphantom{\int\limits_0} \mathcal{I}_{\textup{cone}}^{\mathscr{H}}(\mu)$}&:\parbox{2cm}{$ m_p(\mu)\leq 1,$\\$\alpha=0$}\\
\parbox{11cm}{$\displaystyle \mathcal{I}_{\textup{cone}}^{\mathscr{H}}(\mu) + {\beta \over 2p}\log{\beta \over 2p} - \Big({\beta\over 2p} + \alpha\Big) \log \Big({\beta\over 2p} + \alpha\Big) -\alpha \log \Big({{1-m_p(\mu)}\over \alpha}\Big)$}&:\parbox{2cm}{$m_p(\mu)< 1,$\\$\alpha\in(0,\infty)$}\\
+\vphantom{\int\limits^{1}} \infty &: otherwise.
\end{cases}
\end{align*} 
\end{example}


\subsection{LDPs for the empirical spectral measure of random matrices in $\B^{n, \mathscr{M}}_{p,\beta}$} \label{subsec:LDPMatrixNSA}

If the matrix is not self-adjoint, we define the empirical spectral measure of $Z\in \Mat_n(\mathbb{F}_\beta)$ with respect to the squared singular values $s^2_1(A) \le \ldots \le  s^2_n(A)$ as $\mu_n:={1\over n}\sum_{i=1}^n\delta_{n^{2/p}s^2_i(Z)}$. Note that just as before, the coordinates of the vector $(s^2_1(Z), \ldots, s^2_n(Z)) \in \R^n_+$ are suitably scaled. In the non-self-adjoint case, we mean the  rescaled empirical spectral measure with respect to the squared singular values when we talk of the empirical spectral measure. As  in the previous section, a large deviation principle for the empirical spectral measure of a sequence of random matrices with distribution $\bU_{n,p,\beta}^{\mathscr{M}}$ or $\bC_{n,p,\beta}^{\mathscr{M}}$ on $\B_{p,\beta}^{n,{\mathscr{M}}}$ was proved in \cite{KPTSanov}. Especially, it was observed that the rate function in both cases is the same up to a constant. Slightly adapting the proof of Theorem \ref{thm:LDPEmpSpecMeasureSA}, we can show that this phenomenon occurs in a more general context. The proof is now based on Theorem \ref{thm:SvDistr} and the norm distribution outlined in Remark \ref{rmk:NormDistrNSA} instead of Theorem \ref{thm:EvDistr} and Remark \ref{rmk:NormDistrSA}, but this time also on \cite[Theorem 1.5]{KPTSanov} instead of \cite[Theorem 1.1]{KPTSanov}, the latter of which we stated as Lemma \ref{lem:LDPfromSanovPaper} above.
\begin{thm}\label{thm:LDPEmpSpecMeasureNSA}
Let $0<p<\infty$, $\beta\in\{1,2,4\}$, and let $(\vartheta_n)_{n\in\N}$ be a sequence in $[0,1]$ with $\lim\limits_{n\to\infty} \vartheta_n = \vartheta \in [0,1]$ and denote by $k(\vartheta) \ge 2$ the smallest number such that $\lim\limits_{n\to\infty} n^{-k(\vartheta)} \, |\log(1-\vartheta_n)| < +\infty$. Also, let $(\alpha_n)_{n\in\N}$ be a positive, real sequences such that $\lim\limits_{n\to\infty} \alpha_n n^{-2} =\alpha \in [0,\infty)$. For each $n\in\N$ let $\bW_n=\vartheta_n\delta_0 +(1-\vartheta_n)\textbf{G}(\alpha_n, 1)$, and let $Z^{(n)}$ be a random matrix in $\B_{p,\beta}^{n, \mathscr{M}}$ chosen according to the distribution $\bP_{n,p,\bW_n,\beta}^{\mathscr{M}}$. Then the sequence of random probability measures $\mu_n:={1\over n}\sum\limits_{i=1}^n\delta_{n^{2/p}s^2_i(Z^{(n)})}$ satisfies a large deviation principle on $\mathcal{M}(\R_+)$ with speed $n^2$ and good rate function
\begin{align*}
\mathcal{I}_{\textup{emp}}^{\mathscr{M}} (\mu) = \begin{cases}
\displaystyle \mathcal{I}_{\textup{cone}}^{\mathscr{M}} (\mu) - c_{(1-\vartheta)}^{\mathscr{M}} &: \parbox{5cm}{$m_p(\mu)\le1, k(\vartheta) \ge 2, \alpha =0$}\\
\parbox{9cm}{$ \displaystyle  \mathcal{I}_{\rm cone}^{\mathscr{M}} (\mu) +{\beta\over p}\log\Big({\beta\over p}\Big) - \Big({\beta\over p} + \alpha\Big) \log \Big({\beta\over p} + \alpha\Big) \vphantom{\int\limits_0^1}$\\$- \alpha \log \Big(\frac{1 -m_p(\mu)}{\alpha}\Big) - c_{(1-\vartheta)}^{\mathscr{M}}  \vphantom{\int\limits_0}$} &: \parbox{5cm}{$m_p(\mu)<1, k(\vartheta)=2,  \alpha>0$}\\
+\infty &: otherwise,
\end{cases}
\end{align*}
where 
$$
\mathcal{I}_{\textup{cone}}^{\mathscr{M}}(\mu) = \begin{cases}
\displaystyle -{\beta\over 2}\int_{\R}\int_\R\log|x-y|\,\mu(\dint x)\mu(\dint y)+{\beta\over p}\log\Big({\sqrt{\pi}\,p\,\Gamma({p\over 2})\over 2^p\,\sqrt{e}\,\Gamma({p+1\over 2})}\Big) &: m_{p/2}(\mu) \leq 1\\
+\infty &: otherwise,
\end{cases}
$$
and 
$$ 
c_{(1-\vartheta)}^{\mathscr{M}} := \begin{cases} 
\lim\limits_{n\to\infty} n^{-2} \, \log(1-\vartheta_n) \vphantom{\int\limits_{0}}&: k(\vartheta) = 2\\
0 &: k(\vartheta) > 2.
\end{cases}
$$
\end{thm}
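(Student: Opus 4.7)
The plan is to mirror the strategy of Theorem \ref{thm:LDPEmpSpecMeasureSA}, replacing each self-adjoint ingredient by its non-self-adjoint counterpart and carefully tracking how the change from $p$ to $p/2$ and from the repulsion factor $\Delta_\beta^c$ (degree of homogeneity $\frac{\beta n(n-1)}{2}$) to $\nabla_\beta^c$ (degree of homogeneity $\frac{\beta n^2}{2} - n$) propagates through the argument.

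First, I would invoke Theorem \ref{thm:SvDistr} to obtain the probabilistic representation
$$
s^2(Z^{(n)}) \overset{d}{=} \frac{X^{(n)}}{(\|X^{(n)}\|_{p/2}^{p/2} + W^{(n)})^{2/p}} = (B^{(n)})^{2/p}\,\frac{X^{(n)}}{\|X^{(n)}\|_{p/2}},
$$
where $X^{(n)}$ has density proportional to $e^{-\|x\|_{p/2}^{p/2}}\nabla_\beta^c(x)$ on $\R^n_+$, $W^{(n)} \sim \bW_n$ is independent of $X^{(n)}$, and $B^{(n)} := \|X^{(n)}\|_{p/2}^{p/2}/(\|X^{(n)}\|_{p/2}^{p/2} + W^{(n)})$. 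Since the empirical measure is invariant under reordering of its atoms, the random permutation in Theorem \ref{thm:SvDistr} is harmless, and the scaling $n^{2/p}$ becomes
$$
\mu_n \overset{d}{=} \frac{1}{n}\sum_{i=1}^n \delta_{n^{2/p}(B^{(n)})^{2/p}\, X_i^{(n)}/\|X^{(n)}\|_{p/2}}.
$$

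Next, I would establish an LDP on $[0,1]$ for $(B^{(n)})_{n\in\N}$ at speed $n^2$, which is the direct analogue of Lemma \ref{lem:LDPBetaExtendedMat1}. By Proposition \ref{prop:DistributionNormB} applied in conjunction with Remark \ref{rmk:NormDistrNSA}, $B^{(n)} \sim \vartheta_n\delta_1 + (1-\vartheta_n)\bB(\frac{\beta}{p}n^2, \alpha_n)$. The proof of Lemma \ref{lem:LDPBetaExtended-Revised} then goes through essentially verbatim, via Gärtner–Ellis (Proposition \ref{prop:GaertnerEllis}) and the adapted Laplace and Breitung–Hohenbichler principles, but the rescaling needed to extract a non-trivial limit of $\frac{1}{n^2}\Lambda_n(n^2 t)$ is now by $n^2$ rather than $n$, because the first beta parameter scales as $\frac{\beta}{p}n^2$. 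The outcome is that $\frac{1}{p}$ is systematically replaced by $\frac{\beta}{p}$ in the rate function, yielding a rate function $\mathcal{I}_{\text{beta}}^{\mathscr{M}}$ of precisely the form of $\mathcal{I}_{\text{beta}}^{\mathscr{H}}$ in Lemma \ref{lem:LDPBetaExtendedMat1} but with $\frac{\beta}{2p}$ replaced by $\frac{\beta}{p}$.

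For the directional component I would appeal to \cite[Theorem 1.5]{KPTSanov}, which furnishes an LDP with speed $n^2$ and good rate function $\mathcal{I}_{\text{cone}}^{\mathscr{M}}$ (as stated in the theorem) for the empirical spectral measure of a random matrix with distribution $\bC_{n,p,\beta}^{\mathscr{M}}$ — this is the singular-value analogue of Lemma \ref{lem:LDPfromSanovPaper}. Independence of $X^{(n)}/\|X^{(n)}\|_{p/2}$ and $\|X^{(n)}\|_{p/2}$ follows from Lemma \ref{lem:DistributionConeMeasure}(i) applied to $f = \nabla_\beta^c$, $p$ replaced by $p/2$, and the non-negative setting. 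Hence by Proposition \ref{prop:ProdLDP} the pair $(\xi_n, B^{(n)})$ with $\xi_n := \frac{1}{n}\sum_i \delta_{n^{2/p} X_i^{(n)}/\|X^{(n)}\|_{p/2}}$ satisfies an LDP on $\mathcal{M}(\R_+)\times[0,1]$ at speed $n^2$ with good rate function $(\xi,z) \mapsto \mathcal{I}_{\text{cone}}^{\mathscr{M}}(\xi) + \mathcal{I}_{\text{beta}}^{\mathscr{M}}(z)$.

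The contraction principle (Proposition \ref{prop:ContrPrinc}) is then applied through the continuous map $F : \mathcal{M}(\R_+) \times (0,1] \to \mathcal{M}(\R_+)$, $F(\xi, z) := \xi(z^{-2/p}\,\cdot\,)$, using that $\mathcal{I}_{\text{beta}}^{\mathscr{M}}(0) = +\infty$ to discard the boundary. This yields an LDP for $(\mu_n)_{n\in\N}$ at speed $n^2$ with good rate function
$$
\inf_{\xi(z^{-2/p}\,\cdot\,) = \mu(\,\cdot\,)} \bigl[\mathcal{I}_{\text{cone}}^{\mathscr{M}}(\xi) + \mathcal{I}_{\text{beta}}^{\mathscr{M}}(z)\bigr].
$$
The final step is to identify this infimum with $\mathcal{I}_{\text{emp}}^{\mathscr{M}}(\mu)$ by the same case analysis as in the proof of Theorem \ref{thm:LDPEmpSpecMeasureSA}: the change of variables $y = z^{2/p}x$ converts $m_{p/2}(\xi) = z^{-1}m_{p/2}(\mu)$, forcing $z \in [m_{p/2}(\mu),1]$, and the logarithmic-energy term in $\mathcal{I}_{\text{cone}}^{\mathscr{M}}$ transforms under $y \mapsto z^{2/p}y$ in a way that produces, after cancellation against $\mathcal{I}_{\text{beta}}^{\mathscr{M}}(z)$, the claimed closed form when $k(\vartheta) = 2$ and $\alpha > 0$, and reduces to $\mathcal{I}_{\text{cone}}^{\mathscr{M}}(\mu) - c_{(1-\vartheta)}^{\mathscr{M}}$ when $\alpha = 0$.

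The main obstacle — and it is more bookkeeping than mathematical — is verifying that the substitution $y = z^{2/p}x$ interacts correctly with the double logarithmic integral $\int\!\int \log|x-y|\,\mu(dx)\mu(dy)$ that replaces the relative entropy appearing in the Euclidean case: one needs to check that the induced additive correction combines with the $\mathcal{I}_{\text{beta}}^{\mathscr{M}}$ contribution so that the optimizer in $z$ is exactly $z = 1 - m_{p/2}(\mu)/(1-z) \cdot \alpha^{-1}$-type, giving the stated explicit rate function when $\alpha > 0$ and a constant infimum when $\alpha = 0$. With this calibration done, the theorem follows.
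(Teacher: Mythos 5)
Your proposal mirrors the paper's own proof step for step: Theorem \ref{thm:SvDistr} for the probabilistic representation of $s^2(Z^{(n)})$, the speed-$n^2$ LDP for $B^{(n)}$ (Lemma \ref{lem:LDPBetaExtendedMat2}, correctly obtained from the first beta parameter $(n+m)/(p/2)\sim \beta n^2/p$ with $m = \tfrac{\beta}{2}n^2 - n$), the cone LDP from \cite[Theorem 1.5]{KPTSanov} (Lemma \ref{lem:LDPSanovNSA}), Proposition \ref{prop:ProdLDP} for the joint LDP, and the contraction principle with $(\xi,z)\mapsto \xi(z^{-2/p}\,\cdot\,)$; this is exactly what the paper does. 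The only imprecision is your closing remark about the optimizer in $z$: there is no interior critical point of the form you wrote. After the dilation, the double-logarithmic energy transforms additively, $\mathcal{I}_{\text{cone}}^{\mathscr{M}}(\xi)=\mathcal{I}_{\text{cone}}^{\mathscr{M}}(\mu)+\tfrac{\beta}{p}\log z$, and this $\tfrac{\beta}{p}\log z$ cancels exactly against the $-\tfrac{\beta}{p}\log z$ contained in $\mathcal{I}_{\rm beta}^{\mathscr{M}}(z)$; what remains to be optimized is $-\alpha\log(1-z)$ plus constants, which is increasing in $z$ for $\alpha>0$, so the infimum over $z\in[m_{p/2}(\mu),1]\cap(0,1)$ sits at the boundary $z=m_{p/2}(\mu)$, giving the stated formula directly.
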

The proof of Theorem \ref{thm:LDPEmpSpecMeasureNSA} is completely analogous to the one of Theorem \ref{thm:LDPEmpSpecMeasureSA}, thus we will only point out the changes in in the auxiliary results that need to be made. 
\begin{lemma}\label{lem:LDPBetaExtendedMat2}
Let $0<p<\infty$, $\beta\in\{1,2,4\}$, and let $(\vartheta_n)_{n\in\N}$ be a sequence in $[0,1]$ with $\lim\limits_{n\to\infty} \vartheta_n = \vartheta \in [0,1]$ and denote by $k(\vartheta) \ge 2$ the smallest number such that $\lim\limits_{n\to\infty} n^{-k(\vartheta)} \, |\log(1-\vartheta_n)| < +\infty$. Also, let $(\alpha_n)_{n\in\N}$ be a positive, real sequences such that $\lim\limits_{n\to\infty} \alpha_nn^{-2} =\alpha \in [0,\infty)$. For each $n\in\N$ let $X^{(n)}=(X_1^{(n)},\ldots,X_n^{(n)})$ be a random vector with density $C_{n,p/2,\nabla^c_\beta}\,e^{-\|x\|_{p/2}^{p/2}}\,\nabla^c_\beta(x)$, $x\in\R^n_+$, with $\nabla_\beta^c$ defined as in Theorem \ref{thm:SvDistr}. Independently of the sequence $(X^{(n)})_{n\in\N}$, let $(W^{(n)})_{n\in\N}$ be a sequence of random variables with $W^{(n)} \sim \bW_n= \vartheta_n\delta_0 + (1-\vartheta_n)\textbf{G}(\alpha_n,1)$. Then the sequence of random variables $(B^{(n)})_{n\in\N}$ with $B^{(n)}:={\|X^{(n)}\|_{p/2}^{p/2}\over\|X^{(n)}\|_{p/2}^{p/2}+W^{(n)}}$ satisfies a large deviation principle on $[0,\infty)$ with speed $n^2$ and good rate function
	$$
\mathcal{I}_{\rm beta}^{\mathscr{M}}(x) = \begin{cases}
0 &: k(\vartheta) > 2,  x =1 \vphantom{\int\limits_{0}} \\
-{\beta\over p} \log(x) - c_{(1- \vartheta)}^{\mathscr{M}}(x)&: \parbox{3cm}{$k(\vartheta) = 2,  \alpha =0,$\\ $x \in (0,1] \vphantom{\int\limits_{0}}$} \\
\parbox{10cm}{$-{\beta\over p} \log(\frac{xp}{\beta}) - \alpha \log\Big(\frac{1-x}{\alpha}\Big) - \Big({\beta\over p} + \alpha\Big) \log\Big({\beta\over p} + \alpha \Big) - c_{(1- \vartheta)}^{\mathscr{M}}(x)$} &:\parbox{3cm}{$k(\vartheta) = 2,  \alpha >0,$\\$x \in (0,1) \vphantom{\int\limits_{0}}$}\\
+\infty&: otherwise,
\end{cases}
$$
where 
$$ 
c_{(1-\vartheta)}^{\mathscr{M}} := \begin{cases} 
\lim\limits_{n\to\infty} n^{-2} \, \log(1-\vartheta_n) \vphantom{\int\limits_{0}}&: k(\vartheta) = 2\\
0 &: k(\vartheta) > 2.
\end{cases}
$$
\end{lemma}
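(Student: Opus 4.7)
The plan is to mirror the proof of Lemma \ref{lem:LDPBetaExtended-Revised} (and the self-adjoint variant in Lemma \ref{lem:LDPBetaExtendedMat1}) almost verbatim, applying the theorem of Gärtner--Ellis (Proposition \ref{prop:GaertnerEllis}) at speed $n^2$. The only genuinely new ingredient is the scaling: since $\nabla_\beta^c$ is homogeneous of degree $m = \tfrac{\beta}{2}n^2 - n$, Remark \ref{rmk:NormDistrNSA} identifies the distribution of $B^{(n)}$ as $\vartheta_n\delta_1 + (1-\vartheta_n)\textbf{B}(\beta n^2/p,\alpha_n)$. The fact that the first beta parameter now scales \emph{quadratically} in $n$ (in contrast to the linear scaling $n/p$ in the Euclidean case and $\beta n(n-1)/(2p)+n/p$ in the self-adjoint case) is precisely what forces the speed to be $n^2$ and produces the factor $\beta/p$ in the rate function.

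Explicitly, after the standard manipulation and the substitution $y = 1-x$, the scaled cumulant generating function becomes
$$
\Lambda(t) = t + \lim_{n\to\infty}\frac{1}{n^2}\log\Bigg[\vartheta_n + \frac{1-\vartheta_n}{B(\beta n^2/p,\alpha_n)}\int_0^1 \exp\!\Big(n^2\big[-ty + \big(\tfrac{\beta}{p}-n^{-2}\big)\log(1-y) + \tfrac{\alpha_n-1}{n^2}\log y\big]\Big)\dint y\Bigg].
$$
Stirling's formula, together with $\alpha_n/n^2 \to \alpha$, yields
$$
\lim_{n\to\infty}\frac{1}{n^2}\log\frac{1}{B(\beta n^2/p,\alpha_n)} = -\frac{\beta}{p}\log\frac{\beta}{p} - \alpha\log\alpha + \Big(\frac{\beta}{p}+\alpha\Big)\log\Big(\frac{\beta}{p}+\alpha\Big)
$$
when $\alpha>0$ (and $0$ when $\alpha=0$, interpreting $0\log 0 = 0$), verifying hypothesis \eqref{eq:LaplaceCondition} with $s_n^{(2)} := (1-\vartheta_n)/B(\beta n^2/p,\alpha_n)$.

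From here the analysis splits into three cases exactly as in Lemma \ref{lem:LDPBetaExtended-Revised}. If $k(\vartheta)>2$, then $s_n^{(2)}$ is crushed by $\vartheta_n\to 1$ on the $n^2$-log scale, so $\Lambda(t)=t$ and $\Lambda^*$ is supported at $\{1\}$. If $k(\vartheta)=2$ and $\alpha>0$, one sandwiches the integrand by $\exp(n^2[-ty + (\tfrac{\beta}{p}\mp\epsilon)\log(1-y) + (\alpha\mp\epsilon)\log y])$ for all sufficiently large $n$, applies the adapted Laplace principle \eqref{eq:ExpLaplace2} to each bound, and sends $\epsilon\downarrow 0$; the resulting suprema rewrite as $\Psi^*(-t)$, where $\Psi(y) = -\tfrac{\beta}{p}\log(1-y)-\alpha\log y$. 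This gives
$$
\Lambda(t) = t + c_{(1-\vartheta)}^{\mathscr{M}} -\frac{\beta}{p}\log\frac{\beta}{p}-\alpha\log\alpha+\Big(\frac{\beta}{p}+\alpha\Big)\log\Big(\frac{\beta}{p}+\alpha\Big) + \Psi^*(-t),
$$
and the rate function is obtained by the involution property of the Legendre--Fenchel transform together with the change of variables $z = 1-x$. If $k(\vartheta)=2$ and $\alpha=0$, the lower bound is treated in the same way (by Laplace), but for the upper bound the function $-ty+(\tfrac{\beta}{p}-\epsilon)\log(1-y)$ becomes monotone for $t\geq -\beta/p$, so its maximum on $[0,1]$ is attained at the boundary $y=0$; Proposition \ref{prop:Breitung}, via Remark \ref{rmk:ApplBreitung}, then provides the correct asymptotic expansion \eqref{eq:ExpLaplaceBreitung}. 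Assembling the bounds gives $\Lambda(t) = t + c_{(1-\vartheta)}^{\mathscr{M}} + \sup_{y\in(0,1)}[-ty + \tfrac{\beta}{p}\log(1-y)]$, and the rate function follows from the same Legendre-transform manipulation.

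The main obstacle is purely bookkeeping: keeping track of the constants that arise from Stirling asymptotics with the quadratically scaled parameter $\beta n^2/p$, and verifying that the lower semi-continuity and differentiability hypotheses of Gärtner--Ellis hold (the origin lies in the interior of $\mathcal{D}_\Lambda$ since $\Psi^*$ is finite on a neighborhood of $0$, and $\Psi$ is strictly convex and smooth on $(0,1)$). No conceptually new ingredient beyond the proof of Lemma \ref{lem:LDPBetaExtended-Revised} is required, and the details can be transcribed directly with the replacements $n \leadsto n^2$, $1/p \leadsto \beta/p$, and $\alpha_n/n \leadsto \alpha_n/n^2$.
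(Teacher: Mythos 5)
Your proposal is correct and takes exactly the approach the paper indicates: reduce to the proof of Lemma~\ref{lem:LDPBetaExtended-Revised} by invoking Remark~\ref{rmk:NormDistrNSA} for the distribution $\vartheta_n\delta_1 + (1-\vartheta_n)\bB(\beta n^2/p,\alpha_n)$ of $B^{(n)}$, and run G\"artner--Ellis at speed $n^2$ with the replacements $n\leadsto n^2$, $1/p\leadsto\beta/p$, $\alpha_n/n\leadsto\alpha_n/n^2$. The paper leaves all of this implicit by citation, so your filled-in Stirling computation, the $\epsilon$-sandwich, and the Breitung boundary case at $\alpha=0$ are a faithful unpacking of the intended argument rather than a different route.
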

This first lemma establishes an LDP for the beta distributed $(p/2)$-norm of the random vector  ${X^{(n)}}/\big{(}{\|X^{(n)}\|_{p/2}^{p/2}+W^{(n)}}\big{)}^{2/p}$. This is proven in the same way as Lemma \ref{lem:LDPBetaExtended-Revised}. In the non-self-adjoint case nothing changes in comparison to the self-adjoint case, besides the value for $p$ (which becomes $p/2$) and the density of the random vector $X^{(n)}$ underlying that representation. For the singular value distribution in non-self-adjoint matrix $p$-balls a different weight function $\nabla^c_\beta$ is needed with a different degree of homogeneity $m=(\beta/2)n^2-n$. This $m$ only plays a role in the first parameter of the beta distribution involved in the distribution of the $p$-norm component (see Remark \ref{rmk:NormDistrNSA}). It affects the large deviation behavior of the random variable $B^{(n)}$ only insofar as the limit of the first parameter changes from $\beta/(2p)$ to $\lim_{n\to\infty} n^{-2} (n+m)/(p/2) =\beta/p$.

The second lemma is the analogue of Lemma \ref{lem:LDPfromSanovPaper} and gives a large deviation principle for the empirical spectral measure of a non-self-adjoint random matrix in $\B_{p,\beta}^{n, \mathscr{M}}$ with distribution $\bC^{\mathscr{M}}_{n,p,\beta}$. T\label{key}his result can be found in \cite[Theorem 1.5]{KPTSanov}.

\begin{lemma}\label{lem:LDPSanovNSA}
For $n\in\N$ let $Z^{(n)}$ be a random matrix in $\B_{p,\beta}^{n, \mathscr{M}}$ with distribution $\bC_{n,p,\beta}^{\mathscr{M}}$ and singular values $s_i(Z^{(n)})$, $i\in\{1,\ldots,n\}$. Then the sequence of random probability measures
$
\mu_n := {1\over n}\sum_{i=1}^n\delta_{n^{2/p}s^2_i(Z^{(n)})}
$
satisfies a large deviation principle on $\mathcal{M}(\R_+)$ with speed $n^2$ and good rate function
$$
\mathcal{I}^{\mathscr{M}}_{\textup{cone}}(\mu) = \begin{cases}
\displaystyle -{\beta\over 2}\int_{\R_+}\int_{\R_+}\log|x-y|\,\mu(\dint x)\mu(\dint y)+{\beta\over p}\log\Big({\sqrt{\pi}\,p\,\Gamma({p\over 2})\over 2^p\,\sqrt{e}\,\Gamma({p+1\over 2})}\Big) &: m_{p/2}(\mu) \leq 1\\
+\infty &: otherwise.
\end{cases}
$$
\end{lemma}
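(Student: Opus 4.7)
The plan is to reduce this statement about singular values to an LDP for the empirical measure of the coordinates of an explicit $\beta$--Coulomb--gas vector on $\R^n_+$, and then invoke classical log--potential theory. Applying Theorem \ref{thm:SvDistr} with $\bW=\delta_0$, we have $\bP^{\mathscr{M}}_{n,p,\delta_0,\beta}=\bC^{\mathscr{M}}_{n,p,\beta}$ and $W\equiv 0$, so that (after a uniform random permutation, which leaves the empirical measure invariant) the vector $s^2(Z^{(n)})$ is equidistributed with $X/\|X\|_{p/2}$, where $X=(X_1,\ldots,X_n)$ has joint density on $\R^n_+$ proportional to
$$
\prod_{1\leq i<j\leq n}|x_i-x_j|^\beta\,\prod_{i=1}^n x_i^{\beta/2-1}\,e^{-\sum_i x_i^{p/2}}.
$$
Hence $\mu_n\overset{d}{=} \tfrac{1}{n}\sum_i\delta_{n^{2/p}X_i/\|X\|_{p/2}}$.

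Next, I would split the problem exactly as in the proof of Theorem \ref{thm:LDPEmpMeasure}: introduce the random probability measure $\xi_n:=\tfrac1n\sum_i\delta_{n^{2/p}X_i/\|X\|_{p/2}}$ restricted to the $(p/2)$-sphere factor, together with the radial factor $B^{(n)}:=\|X\|_{p/2}^{p/2}/(\|X\|_{p/2}^{p/2}+W^{(n)})$, which here is deterministically $1$. The key input is then an LDP on $\mathcal{M}(\R_+)$, with speed $n^2$, for the empirical measure of the coordinates of the unconstrained $\beta$--Coulomb ensemble with external potential $V(x)=x^{p/2}$ on $\R_+$. This is an instance of the classical Ben Arous--Guionnet / Hiai--Petz framework for $\beta$-ensembles: since the joint density has the form $\exp(-n^2 E_n(x))$, with
$$
E_n(x)=\tfrac{1}{n^2}\!\!\sum_{i<j}\!\beta\log\tfrac{1}{|x_i-x_j|}+\tfrac{1}{n}\sum_i\!\Big(x_i^{p/2}-(\tfrac{\beta}{2}-1)\log x_i\Big),
$$
one obtains the LDP with good rate function
$$
\mathcal J(\mu)=-\tfrac{\beta}{2}\!\!\iint\!\log|x-y|\,\mu(\dint x)\mu(\dint y)+\int_{\R_+}\!\!x^{p/2}\,\mu(\dint x)-\inf\{\ldots\}.
$$

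Then I would pass to the empirical measure $\mu_n$ of the rescaled and normalized points. The normalization by $\|X\|_{p/2}$ and the rescaling by $n^{2/p}$ together impose the constraint $m_{p/2}(\mu)\leq 1$ on admissible limits (with equality almost surely in the unconstrained LDP); this is handled exactly by the contraction principle as in the proof of Theorem \ref{thm:LDPEmpMeasure}, combined with the fact that $B^{(n)}\equiv 1$ (so the radial LDP is trivial with zero rate on $\{1\}$). The infimum of $\mathcal J$ under the constraint $m_{p/2}(\mu)=1$ produces the additive constant $\tfrac{\beta}{p}\log\!\tfrac{\sqrt\pi\,p\,\Gamma(p/2)}{2^p\sqrt e\,\Gamma((p+1)/2)}$ after one explicitly identifies the equilibrium measure minimizing the logarithmic energy with potential $x^{p/2}$ on $\R_+$ under the moment constraint.

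The main obstacle is this last step: carrying out the identification of the minimizer and extracting the constant requires solving a constrained log--potential minimization on $\R_+$ with weight $x^{\beta/2-1}$ and external field $x^{p/2}$, then comparing with the normalization of the generalized Gaussian $\bN_p$. Once this computation is done (or imported from \cite[Theorem 1.5]{KPTSanov}), assembling the pieces via Propositions \ref{prop:ProdLDP} and \ref{prop:ContrPrinc} yields $\mathcal I^{\mathscr{M}}_{\text{cone}}$ in the stated form; in particular the lower semi--continuity and goodness of the rate function follow from lower semi--continuity of the logarithmic energy and tightness of sub-level sets under the moment bound $m_{p/2}(\mu)\leq 1$.
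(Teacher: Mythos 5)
The paper's own ``proof'' of this lemma is simply a citation to \cite[Theorem 1.5]{KPTSanov}; no argument is given. Your proposal is more ambitious: it sketches how one would actually establish the result via log--potential theory. The high-level strategy (reduce via Theorem~\ref{thm:SvDistr} with $\bW=\delta_0$ to a $\beta$--Coulomb gas on $\R^n_+$, then use the large-deviations machinery for such ensembles and normalize) is the right one, and at the end you fall back on the same citation the paper uses. But the sketch, as written, has genuine gaps.

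First, invoking ``the same split as in the proof of Theorem~\ref{thm:LDPEmpMeasure}'' is circular here: in that proof the decomposition into a directional LDP and a radial LDP takes the cone-measure LDP (exactly what you are trying to prove) as an \emph{input}; with $\bW=\delta_0$ the radial component $B^{(n)}\equiv 1$ is degenerate and the directional component \emph{is} $\mu_n$, so the decomposition produces nothing. Second, the passage from the unconstrained ensemble $\tilde\mu_n:=\tfrac1n\sum_i\delta_{X_i/n^{2/p}}$ to $\mu_n$ via the map $F(\nu)=\nu\big(m_{p/2}(\nu)^{2/p}\,\cdot\,\big)$ cannot be handled ``exactly by the contraction principle'': the functional $m_{p/2}$ is only lower semicontinuous on $\mathcal M(\R_+)$ with the weak topology, so $F$ is not continuous and Proposition~\ref{prop:ContrPrinc} does not apply directly. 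This is precisely the escape-of-mass subtlety that also produces the inequality $m_{p/2}(\mu)\le 1$ in the rate function, which your sketch does not address. Third, the displayed energy $E_n$ has an inconsistent normalization: with a term $\tfrac1n\sum_i(x_i^{p/2}-(\tfrac\beta2-1)\log x_i)$ one does \emph{not} obtain a density proportional to $\exp(-n^2E_n(x))$; the $\tfrac1n$ should be $\tfrac1{n^2}$ (equivalently, one should first rescale $y_i=x_i/n^{2/p}$ so the potential appears with a factor $n$, which is the standard Ben Arous--Guionnet scaling). Finally, the explicit constant $\tfrac\beta p\log\tfrac{\sqrt\pi\,p\,\Gamma(p/2)}{2^p\sqrt e\,\Gamma((p+1)/2)}$ does not simply drop out of identifying the equilibrium measure; it comes from the asymptotics of the normalizing Selberg-type integral for the matrix ball, which is a separate nontrivial computation carried out in \cite{KPTSanov}. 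You explicitly acknowledge needing to import that constant, so in effect your proof is the citation plus a heuristic wrapper; the wrapper is conceptually sound but the steps bridging the unconstrained Coulomb-gas LDP to the constrained, normalized one would need real work (exponential tightness, a tilting/conditioning argument as in Kim--Ramanan) that the sketch currently elides.
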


From here on out the proof will be the completely analogous to that of Theorem \ref{thm:LDPEmpSpecMeasureSA}, with the main difference being that one uses the rate functions from Lemma \ref{lem:LDPBetaExtendedMat2} and Lemma \ref{lem:LDPSanovNSA} instead of Lemma \ref{lem:LDPBetaExtended-Revised} and Lemma \ref{lem:LDPfromSanovPaper} and the probabilistic representation from Theorem \ref{thm:SvDistr} instead of Theorem \ref{thm:EvDistr}. 

\bibliographystyle{plain}

\end{document}